\documentclass[11pt,reqno,a4paper]{amsart}

\pdfoutput=1

\usepackage{amsmath,amssymb,amsthm, amsrefs}
\usepackage{mathtools}
\usepackage[margin=1in]{geometry}
\usepackage{enumerate}
\usepackage{tikz}
\usepackage{iftex}
\usepackage{yhmath}
\usepackage{cancel}
\usepackage{color}
\usepackage{siunitx}
\usepackage{array}
\usepackage{tabularx}
\usepackage{tikz-cd}
\usepackage{graphicx}
\usepackage{hyperref}
\hypersetup{
	hidelinks 
}
\allowdisplaybreaks

\linespread{1}

\newtheorem{thm}{Theorem}[section]
\newtheorem{cor}[thm]{Corollary}
\newtheorem{lem}[thm]{Lemma}
\newtheorem{prop}[thm]{Proposition}

\newtheorem{ques}[thm]{Question}

\theoremstyle{definition}
\newtheorem{defn}[thm]{Definition}
\newtheorem{rem}[thm]{Remark}
\newtheorem{ex}[thm]{Example}
\newtheorem*{ack}{Acknowledgments}
\numberwithin{equation}{section}
\setcounter{secnumdepth}{3}
\setcounter{tocdepth}{3}

\allowdisplaybreaks[1]

\def\bR{\mathbb{R}}
\def\bC{\mathbb{C}}
\def\bCP{\mathbb{CP}}
\def\bRP{\mathbb{RP}}
\def\bZ{\mathbb{Z}}

\title[Lagrangian mean curvature flow in the complex projective plane]{Lagrangian mean curvature flow in the\\complex projective plane}
\author{Christopher G. Evans}

\begin{document}

	\begin{abstract}
		We prove a Thomas--Yau-type conjecture for monotone Lagrangian tori satisfying a symmetry condition in the complex projective plane $\bCP^2$. We show that such tori exist for all time under Lagrangian mean curvature flow with surgery, undergoing at most a finite number of surgeries before flowing to a minimal Clifford torus in infinite time. Furthermore, we show that we can construct a torus with any finite number of surgeries before convergence. Along the way, we prove many interesting subsidiary results and develop methods which should be useful in studying Lagrangian mean curvature flow in non-Calabi--Yau manifolds, even in non-symmetric cases.
	\end{abstract}

	\maketitle

	\section{Introduction}
	
	Starting from the Clifford torus, Vianna (\cite{vianna_2014},\cite{Vianna2016}) constructs by an iterative sequence of mutations an infinite family $\mathcal F$ of monotone Lagrangian tori in the complex projective plane $\bCP^2$, with no two members of $\mathcal F$ Hamiltonian isotopic. In this paper, we make and explore a simple observation: Vianna's mutation is exactly the surgery procedure one expects at the prototypical Lawlor neck-type singularity in Lagrangian mean curvature flow. By restricting to equivariant examples of $\mathcal F$, those satisfying an $(S^1 \times \bZ_2)$-symmetry to be specified later, we define a surgery called neck-to-neck surgery at all possible singularities and are able to prove a Thomas--Yau-type conjecture (\cite{Thomas2002},\cite{joyce_2015}):
	
	\begin{thm}[Main theorem]\label{thm-main-sum}
		Let $F:L \to \bCP^2$ be an embedded equivariant monotone Lagrangian torus in the complex projective plane with the standard Fubini--Study K\"ahler metric. Then under Lagrangian mean curvature flow with surgery, $L$ exists for all time and, after undergoing at most a finite number of surgeries, converges to a minimal Clifford torus in infinite time. 
	\end{thm}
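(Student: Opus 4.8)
The plan is to exploit the $(S^1 \times \bZ_2)$-symmetry to reduce the two-dimensional Lagrangian mean curvature flow of $L$ to a one-dimensional flow of a profile curve, and then to run a combinatorial complexity argument to control the surgeries. First I would check that the symmetry is preserved by both the smooth flow and the neck-to-neck surgery, so that the entire evolution is encoded by an equivariant generating curve $\gamma_t$ in the two-dimensional symplectic quotient (equivalently, in the moment-polytope picture of $\bCP^2$). Since the collapsing $S^1$-orbits carry a definite warping and the Fubini--Study metric descends to a smooth metric on the quotient, the reduced evolution is a weighted curve-shortening-type flow; short-time existence and uniqueness for $\gamma_t$ then follow from standard quasilinear parabolic theory, and the monotone condition descends to a normalization on $\gamma_t$ (an equal-enclosed-area condition against the edges of the polytope) that is preserved by the reduced flow away from surgery times.

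Second, I would classify the finite-time singularities of the reduced flow. The symmetry restricts the blow-up models severely: in the one-dimensional symmetric setting, the only singularity compatible with embeddedness and the monotone normalization is the collision of two arcs of $\gamma_t$, that is, the pinching of a neck on $L$, whose ambient model is a transverse pair of Lagrangian planes. This is exactly the configuration desingularized by the Lawlor neck, so I invoke the construction stated earlier, that neck-to-neck surgery is defined at every such singularity and coincides with Vianna's mutation; performing it replaces $\gamma_t$ by a new equivariant curve that represents a strictly simpler member of $\mathcal{F}$.

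Third --- and this is where finiteness comes from --- I would attach to each equivariant monotone curve its Markov-type complexity, the integer invariant recording which member of $\mathcal{F}$ it represents, with the Clifford torus corresponding to the minimal triple. Because each surgery is a reverse mutation, it strictly decreases this positive-integer complexity by a definite amount, so only finitely many surgeries can occur before $\gamma_t$ represents the Clifford torus. As an independent confirmation I would track the Riemannian area, which decreases monotonically under the smooth flow and drops by a quantized amount at each surgery while remaining bounded below by the area of the minimal Clifford torus, again forcing finiteness.

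Finally, after the last surgery the curve $\gamma_t$ has Clifford-type combinatorics, and I would prove long-time existence and smooth convergence to the profile of the minimal Clifford torus. Here I would use that, once the complexity is minimal, no further necks can form (a maximum-principle argument precluding new collisions), that the reduced flow is the gradient flow of area, and that the minimal Clifford torus is the unique equivariant critical point in its class; subconvergence then follows from the area bound and compactness, and I would upgrade it to full infinite-time convergence via a \L ojasiewicz--Simon inequality or an explicit linearized-stability analysis near the critical point. The main obstacle, I expect, is the singularity classification together with the verification that surgery strictly reduces complexity --- ruling out non-Lawlor blow-up models and controlling the flow across the surgery time --- with the infinite-time convergence to the Clifford torus a close second.
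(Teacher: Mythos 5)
Your reduction to an equivariant profile curve and your identification of the singularities as Lawlor-neck pinches match the paper's setup, but the mechanism you propose for finiteness of surgeries fails. You attach to each torus a ``Markov-type complexity'' recording its position in Vianna's family $\mathcal F$ and claim each surgery strictly decreases it. In the equivariant setting there are only \emph{two} Hamiltonian isotopy classes available --- Clifford and Chekanov --- and the flow with surgery \emph{flip-flops} between them: the paper explicitly constructs a Clifford torus that develops a finite-time singularity and becomes a Chekanov torus after surgery (Theorem \ref{thm-Clifford-finite-time-sing-sum}), and conversely every Chekanov torus becomes a Clifford torus after surgery. So the isotopy class is not a monotone quantity, and your claim that ``once the complexity is minimal, no further necks can form'' is false: Clifford-type combinatorics alone does not preclude singularities. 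Your backup argument via a quantized area drop also does not work, because the Scale Lemma (Lemma \ref{lem-scale}) shows surgery is performed at an arbitrarily small scale, so the area removed at a single surgery has no uniform positive lower bound.

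The quantity the paper actually uses is the number of intersections of the profile curve with the fixed cone $C^0_{\pi/2}$: this is non-increasing under the smooth flow by a Sturmian zero-counting argument, and it strictly decreases at each neck-to-neck surgery \emph{by construction} --- but making that construction possible is precisely the content of the Scale Lemma, whose proof requires the countable family of complete immersed minimal equivariant Lagrangians built in Section \ref{sec-minimal} as barriers. This is the missing idea in your outline. Two further gaps: (i) you need to show a Chekanov torus always reaches a singularity in finite time (otherwise the flow could stall in the Chekanov class); the paper does this via a $J$-holomorphic triangle calculation with the generalised Cieliebak--Goldstein formula showing there is no minimal equivariant Chekanov torus. (ii) For the terminal stage you need not just ``Clifford type'' but the graphical condition over the minimal Clifford torus (equivalently, exactly four cone intersections); long-time existence there is proved by a monotonicity contradiction on triangle areas, and convergence follows from Chen--He's dichotomy (Proposition \ref{Prop-infinite-sing}) plus uniqueness of the embedded minimal equivariant torus, rather than a \L ojasiewicz--Simon argument.
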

	
	This is a natural extension of the classical result that an embedded circle in $\bCP^1 = S^2$ that divides $S^2$ into equal area pieces exists for all time and converges to an equator. Unlike in the curve-shortening case, singularities are an inevitable feature of Lagrangian mean curvature flow in higher dimensions, and therefore developing an understanding of surgeries at these singularities is of crucial importance. We highlight that to the author's knowledge, this is the first example of Lagrangian mean curvature flow with surgery in the literature.
	
	In the course of proving the main theorem, we prove some interesting side results, a selection of which we now highlight. 
	
	Wang \cite{Wang2001} proved that almost-calibrated Lagrangians in Calabi--Yau manifolds do not attain type I singularities. This result provides a clear distinction between Lagrangian mean curvature flow and hypersurface mean curvature flow where type I singularities are commonplace and type II singularities are rare. We prove an analogue of this result for monotone Lagrangians in Fano manifolds, which we take to mean K\"ahler--Einstein manifolds with positive Einstein constant $\kappa$:
	
	\begin{thm}\label{Monotone-typeI-sum}
		Let $F_t:L^n \to M^{2n}$ be a monotone Lagrangian mean curvature flow in a K\"ahler--Einstein manifold $M$ with Einstein constant $\kappa > 0$. Then $F_t$ does not attain any type I singularities.
	\end{thm}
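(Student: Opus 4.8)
The plan is to run a blow-up argument at the singular time and reduce the statement to a rigidity property of Lagrangian self-shrinkers in flat $\bC^n$, exploiting the fact that type I rescaling destroys the ambient curvature; the monotone hypothesis will play the role that the almost-calibrated condition plays in Wang's Calabi--Yau theorem. Suppose for contradiction that $F_t$ attains a type I singularity at $(p,T)$, so that $\max_L |A|^2 \le C/(T-t)$ up to the singular time. Performing parabolic (type I) rescaling about $(p,T)$ in geodesic normal coordinates centred at $p$, the ambient metric is dilated by a factor $\lambda \to \infty$, which multiplies the ambient curvature by $\lambda^{-2}\to 0$; the rescaled ambient geometry therefore converges to the flat tangent space $(T_pM,g_p)\cong\bC^n$, and the rescaled flows constitute a Lagrangian mean curvature flow there. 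Huisken's monotonicity formula---in its form for a general ambient manifold, whose error terms are scaled away in the limit---together with the type I bound forces these rescaled flows to subconverge smoothly to a Lagrangian self-shrinker $\Sigma\subset\bC^n$, i.e. one satisfying $H=-\tfrac12 F^{\perp}$, with uniformly bounded second fundamental form.

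The crux, and where the monotone hypothesis is essential, is to equip $\Sigma$ with a globally defined Lagrangian angle $\theta$, satisfying $H=J\nabla\theta$, and to show it is constant. In a K\"ahler--Einstein manifold the mean curvature one-form $\sigma_H=\iota_H\omega|_L$ obeys $\rd\sigma_H=\kappa\,F^*\omega=0$, so it is closed with cohomology class a fixed multiple of the Maslov class; monotonicity ties the periods of $\sigma_H$ to the symplectic areas of the corresponding cycles, and under the infinite magnification of a single point these areas collapse, so $\sigma_H$ becomes exact on $\Sigma$ and $\theta$ is single-valued. On a self-shrinker the angle is scale-invariant, and feeding the evolution $\partial_t\theta=\Delta\theta$ through the type I rescaling shows that on $\Sigma$ it satisfies the drift-harmonic equation
\[
\Delta_\Sigma\theta-\tfrac12\langle F,\nabla\theta\rangle=0.
\]
Writing $f=\tfrac14|F|^2$, so that $\nabla^\Sigma f=\tfrac12 F^{\top}$, this says $\operatorname{div}\!\big(e^{-f}\nabla\theta\big)=0$; integrating $\theta$ against this with the Gaussian weight and using that the weight defeats the at most polynomial volume growth of $\Sigma$ and the bound $|\nabla\theta|=|H|\le\tfrac12|F|$ (so all boundary terms vanish) gives $\int_\Sigma|\nabla\theta|^2 e^{-f}=0$. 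Hence $\theta$ is constant and $\Sigma$ is special Lagrangian.

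It remains to convert this rigidity into a contradiction. A special Lagrangian $\Sigma$ is minimal, $H\equiv0$, so the self-shrinker equation gives $F^{\perp}=-2H\equiv0$; thus $F=F^{\top}$ is everywhere tangent and $\Sigma$ is invariant under dilations, i.e. a minimal cone. Being a smooth self-shrinker with bounded second fundamental form, this cone has no singularity at the origin and is therefore a Lagrangian $n$-plane. Since the flow is embedded, the convergence to this plane is locally graphical and of multiplicity one, so White's local regularity theorem applies and shows that $(p,T)$ is in fact a regular point of the flow, contradicting the assumption that it is singular.

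I expect the delicate step to be the transfer of the monotone condition to the blow-up in the second paragraph---making precise that the Maslov--area proportionality forces $\sigma_H$ to be exact on the limiting self-shrinker so that $\theta$ is genuinely single-valued---together with the justification of the weighted integration by parts. This is exactly the ingredient that is absent in the non-monotone setting and that distinguishes the Fano case from Wang's Calabi--Yau argument, where the almost-calibrated condition supplies a well-defined bounded angle for free.
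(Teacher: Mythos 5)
Your proof is correct and is essentially the paper's argument run in the contrapositive: both hinge on (i) the Maslov--area proportionality being scale-invariant while symplectic areas of discs collapse under the parabolic rescaling, and (ii) the rigidity of zero-Maslov self-shrinkers as minimal Lagrangian cones, which you re-derive via the drift-Laplacian integration by parts rather than citing \cite{Groh2007} as the paper does. The paper organizes this more directly --- the type I blow-up is non-flat, hence by the rigidity result cannot be zero-Maslov, and the resulting positive-Maslov disc has area pinned at $\pi\mu/\kappa>0$ by monotonicity while the rescaled areas tend to zero --- which sidesteps your final appeal to White's regularity theorem and the multiplicity-one verification it requires.
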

	
	By studying the equation for the equivariant mean curvature, which we derive quickly and explicitly by a novel method, we are able to show that:
	
	\begin{thm}\label{thm-main-minimal}
		There exists a countably infinite family of complete immersed minimal  Lagrangian tori in $\bCP^2$.
	\end{thm}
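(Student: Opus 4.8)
The plan is to realise minimal Lagrangians as the stationary solutions of the flow and to solve the reduced equation $H \equiv 0$ directly. First I would exploit the equivariant reduction: the $(S^1 \times \bZ_2)$-symmetry forces the torus to be swept out by the $S^1$-orbits over a profile curve $\gamma$ in the quotient, so that the entire torus is encoded by $\gamma$ (the $S^1$ supplies one cycle of the torus and $\gamma$, once it closes, supplies the other). Substituting this ansatz into the explicit formula for the equivariant mean curvature derived earlier, the condition that the torus be minimal, $H \equiv 0$, collapses to an autonomous system of ODEs for $\gamma$, which I would write in an arclength/angle parametrisation. The $\bZ_2$-factor should further impose a reflection symmetry on $\gamma$, which is convenient for the closing analysis below.

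Next I would analyse this ODE by exhibiting a conserved quantity $E$ (an ``energy''), reducing the problem to understanding the level sets $\{E = \mathrm{const}\}$ in a phase plane. The positively-curved geometry of $\bCP^2$ should render the system of pendulum type, so that for an open interval of energy levels the orbits are periodic, with the radial coordinate of $\gamma$ oscillating between two turning values. Each such periodic orbit yields a profile curve that is closed in the radial variable, but to generate a genuine \emph{immersed} torus the curve must also close in the angular direction: the total angle $\Delta\phi(E)$ swept over one radial period must be a rational multiple of $2\pi$.

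I would then study the rotation map $E \mapsto \Delta\phi(E)$ on the interval of admissible energies, expressing it as a complete elliptic-type integral over the two turning values, and show that it varies continuously and non-trivially between its two limiting values — one limit being the minimal Clifford torus and the other a degenerate orbit (a geodesic circle or a cone point of the quotient). By the intermediate value theorem $\Delta\phi$ then attains a countably infinite set of rational multiples of $2\pi$; each such value produces a closed profile curve and hence a complete immersed minimal Lagrangian torus, and distinct values of $\Delta\phi/2\pi$ give non-congruent tori distinguished by their rotation numbers.

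The main obstacle is the last step: controlling the asymptotics of the period integral $\Delta\phi(E)$ at the two ends of the admissible interval precisely enough to guarantee that it is non-constant and sweeps out a genuine interval of values, so that infinitely many rational closing conditions are actually met. A secondary difficulty is confirming that the resulting immersions are truly distinct tori rather than reparametrised or multiply-covered copies of a single one; I expect the rotation number to separate them, but this requires checking that it is a genuine invariant of the image rather than of the parametrisation.
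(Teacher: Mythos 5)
Your proposal follows essentially the same route as the paper: reduce to the profile-curve ODE, find a first integral so that solutions oscillate between two radii, and then show the period/rotation-number map is continuous and non-constant so that infinitely many rational closing conditions are met. The obstacle you flag — controlling the asymptotics of the period integral — is precisely where the paper concentrates its effort (Lemma \ref{lem-period} shows the period tends to $3\pi/2$ as the energy parameter degenerates, via Cieliebak--Goldstein area comparisons on holomorphic polygons rather than direct estimation of the elliptic integral, together with an explicit value exceeding $3\pi/2$).
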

	
	The immersed minimal tori constructed are similar to the self-shrinking solutions to curve-shortening flow discovered by Abresch--Langer \cite{abresch_langer_1986}, see Figure \ref{fig-spirgoraphs}.
		
	\begin{figure}[b]
		\centering
		\includegraphics[scale=0.24]{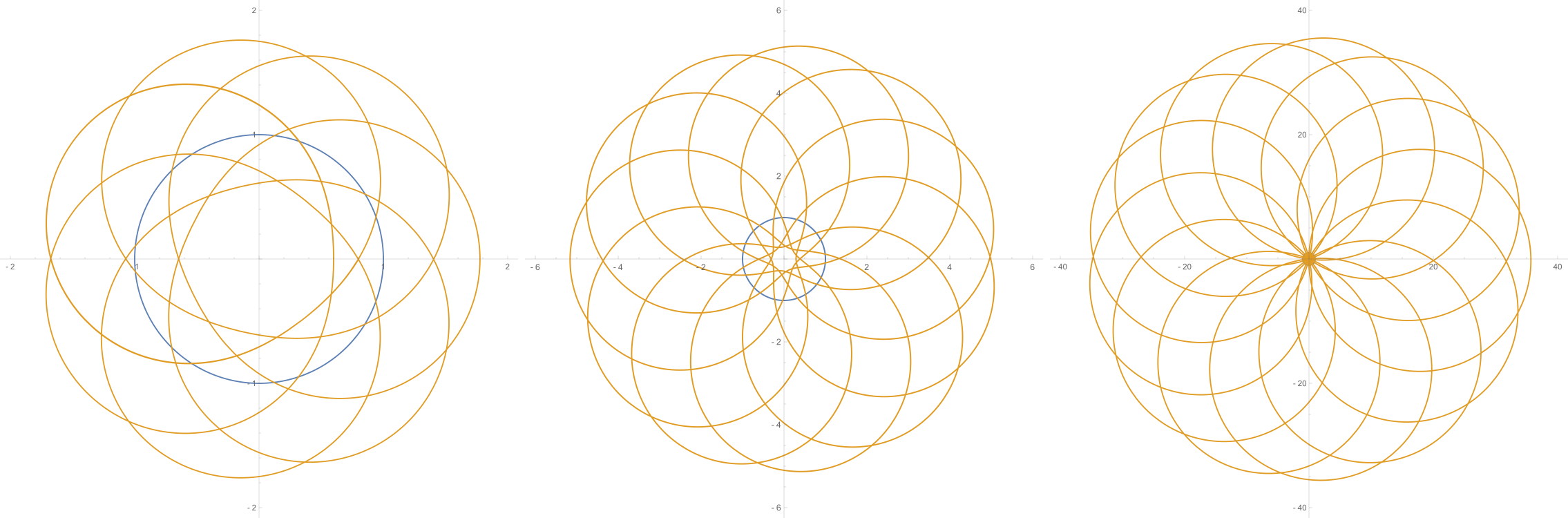}
		\caption{The profile curves of a selection of complete immersed minimal equivariant Lagrangian tori in $\bCP^2$ of varying period.}\label{fig-spirgoraphs}
	\end{figure}
	
	The equivariant restriction in the main theorem leaves us with two classes Lagrangian tori: up to Hamiltonian isotopy, they are the Clifford torus and the Chekanov torus. The main theorem above might seem to imply that a Clifford torus does not have singularities under Lagrangian mean curvature flow. However, this is not the case:
	
	\begin{thm}\label{thm-Clifford-finite-time-sing-sum}
		There exists a Clifford torus $L$ in $\bCP^2$ such that under mean curvature flow $L$ has a finite-time singularity and surgery at the singularity makes $L$ a Chekanov torus. 
	\end{thm}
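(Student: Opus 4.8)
The plan is to work entirely with the equivariant reduction used to derive the equivariant mean curvature equation earlier in the paper, under which the flow of an $(S^1\times\bZ_2)$-equivariant torus descends to a weighted curve-shortening flow of a closed profile curve $\gamma$ in a two-dimensional reduced domain $\Omega$. Writing $w\colon\Omega\to[0,\infty)$ for the length of the collapsing $S^1$-orbit, the reduced mean curvature of $\gamma$ has the schematic form $\kappa-\langle\nabla\log w,\nu\rangle$, where $\kappa$ is the geodesic curvature; the crucial feature is that $w$ vanishes on the image of the $S^1$-fixed locus, and the term $-\langle\nabla\log w,\nu\rangle$ attracts $\gamma$ toward this degenerate set exactly as the $-1/r$ term attracts the profile toward the axis in rotationally symmetric mean curvature flow. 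In this picture the Clifford and Chekanov tori are distinguished by the isotopy type of $\gamma$ in $\Omega$ relative to a distinguished point $p_0$ of the degenerate locus, and neck-to-neck surgery is Vianna's mutation: it cuts a collapsing neck of $\gamma$ and reconnects the two resulting arcs across $p_0$, changing that isotopy type.

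With this dictionary in place, I would construct the initial datum explicitly. Take a monotone equivariant Clifford torus whose profile curve $\gamma_0$ is an embedded, $\bZ_2$-symmetric loop carrying a long, thin, nearly flat neck that dips close to $p_0$, while the remaining bulk of $\gamma_0$ stays a definite distance from the degenerate locus; a one-parameter rescaling of the construction arranges monotonicity and keeps $L$ a genuine, non-minimal Clifford torus. The aim is to make the neck, rather than the whole curve, the first region to become singular, using the attraction toward $p_0$ described above.

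Next I would force a finite-time pinch. Because $w$ plays the role of the rotational radius, the minimal value of $w$ along the neck obeys a differential inequality of the form $w_{\min}\,\partial_t w_{\min}\le -1+(\text{controlled curvature terms})$, so that for a sufficiently thin and flat initial neck $w_{\min}$ reaches zero at some time $T_0$ strictly before the bulk of $\gamma$ can develop any singularity; this is the weighted analogue of the rotationally symmetric neck pinch, and a localized shrinking barrier straddling the neck, together with the avoidance principle for the reduced flow, pins the singular time and location to the neck. By Theorem \ref{Monotone-typeI-sum} the resulting singularity cannot be of type I, consistent with its being a single neck pinch; a blow-up analysis at the pinch should identify the tangent flow with the standard Lawlor neck, so that exactly one neck-to-neck surgery is prescribed. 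Performing that surgery reconnects the two arcs of $\gamma$ across $p_0$ to give an embedded curve $\gamma'$ of changed isotopy type; since this operation is precisely Vianna's mutation of the Clifford torus, the resulting torus is Chekanov, which one confirms by checking that a Hamiltonian-isotopy invariant of $\gamma'$ (for instance its disk-count/superpotential data, or equivalently its combinatorial position relative to $p_0$) agrees with the Chekanov torus and differs from Clifford.

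The main obstacle is the middle step. I must engineer $\gamma_0$ so that the \emph{neck}, and only the neck, becomes singular in finite time while the rest of the curve remains smooth, and I must do this in the weighted flow whose conformal factor $w$ degenerates exactly at the point $p_0$ toward which the neck is collapsing; the competition between the geodesic-curvature term and the $\nabla\log w$ term near the degenerate locus is delicate, and the comparison curves one wants to use as barriers themselves interact with the singular weight. Equally, I must rule out the alternative scenario in which $\gamma_0$ simply flows smoothly to the profile of the minimal Clifford torus. Establishing the requisite barriers and gradient estimates near $\{w=0\}$, and thereby certifying both the existence and the neck-localization of the finite-time singularity, is where the real work lies.
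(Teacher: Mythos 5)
Your overall skeleton agrees with the paper's: the equivariant reduction to a profile curve $\gamma$, the identification of Clifford versus Chekanov by the position of $\gamma$ relative to the origin, the localisation of the singularity at the fixed point with Lawlor-neck blow-up, and the recognition that neck-to-neck surgery is Vianna's mutation are all exactly as in the paper. However, the step you yourself flag as ``where the real work lies'' --- forcing the neck, and only the neck, to pinch in finite time before the curve can relax to a graphical configuration --- is precisely the content of the theorem, and your proposed mechanism for it (a weighted neckpinch ODE $w_{\min}\,\partial_t w_{\min}\le -1+\dots$ with ``controlled curvature terms'', plus a localized shrinking barrier near the degenerate locus) is not established and is not how the paper proceeds. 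The difficulty is real: near the origin the reduced speed is $k-\langle e_r,\nu\rangle/r$ up to a bounded conformal factor, the ``axis'' is a single point rather than a line, the curvature of the collapsing arc necessarily blows up, and any comparison curve you place near $\{w=0\}$ interacts with the same singular weight; none of the standard rotationally symmetric neckpinch barriers transplant directly. Moreover you must simultaneously rule out the competing scenario in which the neck retracts and the torus becomes graphical over the minimal Clifford torus (in which case Proposition \ref{Prop-AC-Cliff} gives long-time existence), and your proposal offers no quantitative obstruction to this.

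The paper avoids the local analysis entirely and argues globally with holomorphic areas. It builds $\gamma$ intersecting the cone $C^0_{2\pi/3}$ three times per quadrant, producing two biangles $Q,R$ and a small triangle $P$ at the maximal opening angle $\psi$. Inside $Q$ and $R$ it places Euclidean circles bounding non-monotone Chekanov tori, whose explicitly computable extinction times (Lemma \ref{lem-shrinking-chek}, via the Cieliebak--Goldstein area ODE) act as barriers keeping $\int_Q\omega,\int_R\omega>0$, hence $\psi>2\pi/3$, up to time $\tfrac16\log(6/5)$. Meanwhile the evolution inequality for the triangle at the maximal opening angle (Lemma \ref{lem-tangential-triangle}) gives $\tfrac{d}{dt}\int_P\omega\le \kappa\int_P\omega-\pi/3$, and Gr\"onwall forces $\int_P\omega$ to vanish by time $\tfrac16\log(12/11)$, which is strictly earlier --- a contradiction with smooth existence, so a singularity occurs. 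This topological/area mechanism is what certifies the singularity; to complete your argument you would need either to import it or to actually prove the weighted neckpinch estimates you sketch, which is a substantial open analytic problem in this degenerate setting.
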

	
	By iterating the explicit construction we provide in a relatively straightforward manner, it is also clear that for any $n \geq 0$, one can find a Lagrangian torus $L$ in $\bCP^2$ with exactly $n$ singularities before converging to a minimal Clifford torus.
	
	Thus the behaviour we observe is of a cyclical nature: a Clifford torus can collapse to become a Chekanov torus, which then exists for some time after surgery before collapsing to a Clifford torus again. This process can repeat any finite number of times before eventually becoming a stable Clifford torus. We note that this type of ``flip-flopping'' behaviour is unusual in mean curvature flow, and to the best of the author's knowledge has not been observed before. We conjecture that it is in fact not unusual behaviour for Lagrangian mean curvature flow, and that we should expect to see similar behaviour in Calabi--Yau manifolds as well.
	
	Despite the restrictive nature of the symmetry used, we believe that the ideas and principles used to guide the proofs here have the potential to see further application in the non-symmetric case. The primary reason for the symmetry is not for symplectic or topological reasons, but to reduce the type of singularities that can occur to a case which is well-understood by the work of Wood (\cite{Wood2019},\cite{wood_2020}). Our main contribution on this front is to define a surgery using the Scale Lemma (Lemma \ref{lem-scale}), showing that singularity formation happens on an arbitrarily small scale. This allows us to categorise the behaviour in the proof of Theorem \ref{thm-main-sum} by observing that the number of intersections with a specific pair of real projective planes decreases under the flow with surgery. Fascinatingly, no such result exists in the Calabi--Yau case and the method of proof employed here cannot possibly be generalised. 
	
	Let us discuss the proof of the main theorem, in the process giving a guide to the paper. 
	
	In Section \ref{sec-equiv}, we begin by calculating the governing equation for equivariant mean curvature flow in $\bCP^2$. Here, we use a novel method to avoid raw computation: we relate the mean curvature of a Lagrangian to the relative Lagrangian angle it forms with the standard toric fibration of $\bCP^2$ by Lagrangian Clifford tori. This vastly simplifies the calculations.
	
	In Section \ref{sec-triangle}, we introduce one of the key ideas which enables the majority of the rest of the paper. Using a mild generalisation of the Cieliebak--Goldstein theorem \cite{Cieliebak2004} to include Lagrangians with corners, we consider evolution equations of areas of $J$-holomorphic triangles bounded by segments of our flowing Lagrangian tori and Lagrangian cones given by unions of totally geodesic real projective planes in $\bRP^2$. The key insight is that by considering such triangles between Hamiltonian non-isotopic objects, we are considering objects which are in a Floer-theoretic sense non-trivial. Thus the behaviour of these triangles is geometric rather than topological, and hence can be measured with respect to the mean curvature flow.
		
	The main mathematical debt of this paper is owed to the work of Neves (\cite{Neves2007},\cite{Neves2010}) and the work of Wood (\cite{Wood2019},\cite{wood_2020}) on singularity formation in Lagrangian mean curvature flow. In Section \ref{sec-sing}, we replicate their results in the positive curvature setting. In doing so, we restrict the class of singularities that can form to simply one type: Lawlor neck singularities at the origin with type I blow-up given by a specific cone of opening angle $\pi/2$, namely $C^0_{\pi/2}$. A full description of the singular behaviour is given in Section \ref{sec-sing}, although since the proof of this fact is long and rather tedious, we refer the reader to the author's doctoral thesis for a full proof. In addition, we prove the Scale Lemma using properties of the minimal equivariant Lagrangians constructed in Section \ref{sec-minimal}.
	
	Next, we study equivariant Clifford and Chekanov tori. We show in Section \ref{sec-graph-Cliff} that Clifford tori satisfying a graphical condition have long-time existence and convergence to the minimal equivariant Clifford torus. We then show in Section \ref{sec-Chek} that any Chekanov torus has a finite-time singularity under mean curvature flow. In the process, we show that there is no minimal equivariant Chekanov torus. The method of proof, using a triangle calculation as described above, is tantalisingly close to being generalisable to non-equivariant tori. 
	
	In Section \ref{sec-surgery}, we define a neck-to-neck surgery using the Scale Lemma that strictly decreases the number of intersections with the cone $C^0_{\pi/2}$. This allows us to handle the remaining cases, and prove the main theorem. 
	
	\begin{ack}
		This paper is a reorganised and abridged version of the author's doctoral thesis \cite{Evans_2022}.
		
		I would like to thank my supervisors Jason Lotay and Felix Schulze for their constant support and encouragement. Many thanks are owed to Jonny Evans for introducing me to the work of Vianna, and encouraging me to study it in the context of mean curvature flow. I would also like to thank my thesis examiners for their many useful comments and corrections.
		
		Thank you to Emily Maw for patiently explaining almost-toric fibrations to me. Thank you to Ben Lambert for many helpful talks over the years, and for struggling through some particularly heinous calculations to verify one of my results. Finally, thank you to Albert Wood, who not only explained in careful detail a great number of his results to me, but also read preliminary versions of my thesis and provided important feedback.
		
		This work was supported by the Leverhulme Trust Research Project Grant RPG-2016-174.
	\end{ack}

	\section{Preliminaries}
	\subsection{Lagrangian tori in \texorpdfstring{$\bCP^2$}{CP2}}
	
	Let $(M,g,J,\omega)$ be a K\"ahler--Einstein manifold with Einstein constant $\kappa$. We call $M$ a Fano manifold if $\kappa>0$. This definition is non-standard in the literature, but is most appropriate from the point of view of Lagrangian mean curvature flow. A half-dimensional immersed submanifold $F:L^n \to M^{2n}$ is called Lagrangian if the symplectic form $\omega$ vanishes on $L$, i.e. $F^*\omega = 0$. We will frequently refer to the immersed Lagrangian $F(L)$ by $L$ where it is unlikely to cause confusion. 
	
	Lagrangians contain a great deal of information about the symplectic topology of the manifold they live in, but since symplectic geometry is always trivial locally, it is often necessary to restrict to a subclass of Lagrangians in order to obtain interesting results. For Fano manifolds, the most important subclass is that of monotone Lagrangians. An embedded Lagrangian $L$ is called monotone if for any disc $D \in \pi_2(M,L)$, the Maslov class of $D$ is related to its holomorphic area by 
	\[\kappa \int_D \omega =  \pi \mu(D).\]
	
	The prototypical and arguably most important Fano manifolds are the complex projective spaces $\bCP^n$, realised as quotients of Euclidean space $\bC^{n+1}$ by the Hopf fibration
	\[(z_0,\dots,z_n) \mapsto [z_0:\cdots:z_n],\]
	where $[z_0:\cdots:z_n] = [w_0:\cdots:w_n]$ if and only if $(w_0,\dots,w_n) = \lambda (z_0,\dots,z_n)$, for some $\lambda \in \bC$. Of course, the Hopf fibration can also be taken from the unit $S^{2n+1}$ sphere in $\bC^{n+1}$. 
	
	The first complex projective space, the complex projective line $\bCP^1$ is the round 2-sphere $S^2$ with Einstein constant $\kappa = 4$. Since we only have 2 real dimensions, it is easy to find all the Lagrangians: any curve in $\bCP^1$ is Lagrangian since $F^*\omega$ is always zero. It is also easy to spot the monotone Lagrangians: an embedded circle $F:S^1 \to \bCP^1$ is monotone if and only if $F(S^1)$ divides the sphere into two pieces of equal area. All monotone Lagrangians are Hamiltonian isotopic.
	
	The second complex projective space, the complex projective plane $\bCP^2$, has Einstein constant $\kappa=6$, which immediately distinguishes it from the $4$-sphere $S^4$. From a symplectic point of view, $\bCP^2$ is vastly more complicated than $\bCP^1$, as evidenced by the abundance of interesting monotone Lagrangians one can construct. The first encountered monotone Lagrangian is the Clifford torus $L_{\text{Cl}}$ given by the projection of the  ``equator'' of the $5$-sphere $S^5 \subset \bC^3$:
	\[(e^{i\theta_0}, e^{i\theta_1}, e^{i \theta_2}) \mapsto [e^{i\theta_0}: e^{i\theta_1}: e^{i\theta_2}].\]
	Unlike $\bCP^1$, there are a great wealth of embedded monotone tori not Hamiltonian isotopic to $L_{\text{Cl}}$. The first is the Chekanov torus, discovered in the 90s \cite{schlenk_chekanov_2010}. One way to obtain the Chekanov torus is from the Clifford torus by a process known as a mutation, which we now discuss.
	
	\subsubsection{Vianna's exotic tori in \texorpdfstring{$\bCP^2$}{CP2}}
	
	Vianna constructs in \cite{Vianna2016} an infinite family $\mathcal F$ of monotone Lagrangian tori in $\bCP^2$, with no two tori Hamiltonian isotopic. We present some details of this construction here as it forms the main motivating example for the rest of the paper.
	
	The first member of the family $\mathcal F$ is the Clifford torus
	\[L_{\text{Cl}} = \{[x:y:z] : |x|=|y|=|z|=1\},\]
	which is realised as the barycentric fibre in the standard toric fibration of $\bCP^2$. From here, Vianna constructs the next member of $\mathcal F$ by a topological procedure known as a \textit{mutation}:
	
	\begin{enumerate}
		\item Introduce a nodal fibre at one of the corners by a \textit{nodal trade}. The corner of the base diagram is now a circle and the fibre above the cross is a Lagrangian torus pinched to create a nodal singularity. The barycentric fibre is still a Clifford torus, and the metric is still the Fubini--Study metric.
		
		\item Rescale a neighbourhood of the line at $\infty$ (i.e. the $\bCP^1$ given by $\{[x:y:0]:x,y \in \bC\}$) until the barycentre has passed over the nodal fibre. The barycentre is now a Chekanov torus and the metric is no longer the Fubini--Study metric.
		
		\item Isotope the metric back to the Fubini--Study metric using Moser's trick. The barycentre remains a Chekanov torus. 
	\end{enumerate} 
	
	Items 2 and 3 together are called a \textit{nodal slide}, and the full mutation is illustrated in Figure \ref{fig-mutation1}.
	
	\begin{figure}
		\centering
		\includegraphics[scale=1.5]{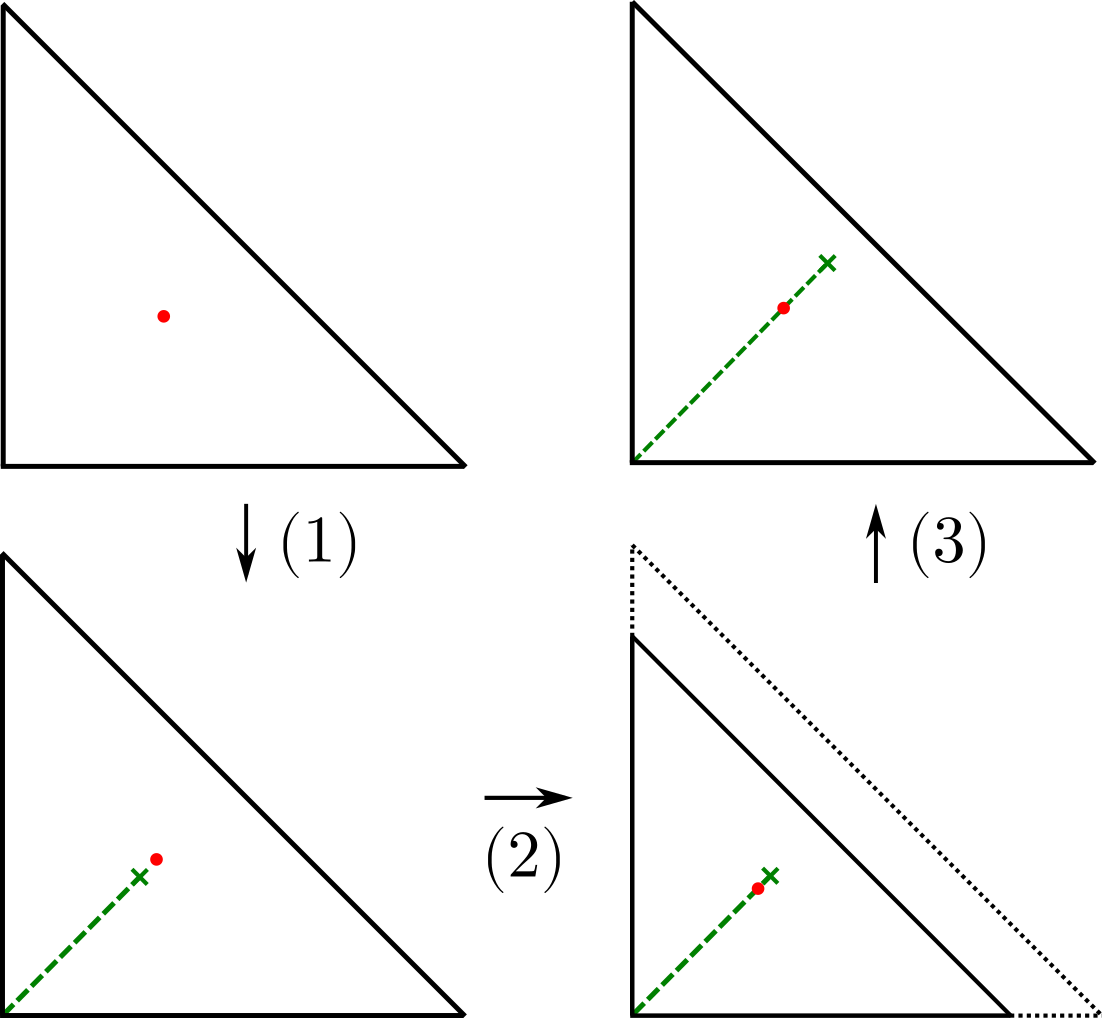}
		\caption{The mutation procedure. The fibre above the barycentres (red dots) are Clifford tori on the left-hand side, and Chekanov tori on the right-hand side. }\label{fig-mutation1}
	\end{figure}
	
	Vianna then iterates this procedure, introducing new nodal fibres at different corners of the moment polytope. Since the two corners $(1,0)$ and $(0,1)$ are the same after the first mutation, the Chekanov torus $L_{\text{Ch}}$ becomes a unique new torus $L_{(1,4,25)}$. Iterating further from this point generates two new tori every time. Vianna indexes this family by integer triples $(a^2,b^2,c^2)$ with $a^2+b^2+c^2 = 3abc$, which are known as \textit{Markov triples}, and shows that a torus $L_{(a^2,b^2,c^2)}$ is realised as the barycentric fibre of a degeneration of the weighted projective space $\bCP^2(a^2,b^2,c^2)$, though we shall not use this perspective in this paper.
	\begin{figure}\label{Fig-exotic-tori-tree}
		\centering
		\begin{tikzcd}[row sep=0.3em]
			& & & & \cdots \\
			& & &L_{(1,25,169)}  \arrow[rd, end anchor=  west] \arrow[ru, end anchor=  west] & \\ 
			& & & & \cdots \\
			L_{\text{Cl}} \ar[r] &L_{\text{Ch}} \ar[r]& L_{(1,4,25)} \arrow[rdd, end anchor = north west] \arrow[ruu,end anchor= south west] & &\\
			& & & & \cdots \\
			& & &L_{(4,25,841)} \arrow[rd, end anchor=  west] \arrow[ru, end anchor=  west] & \\
			& & & & \cdots
		\end{tikzcd}
		\caption{Vianna's exotic tori, indexed by Markov triples $(a^2,b^2,c^2)$.}
	\end{figure}
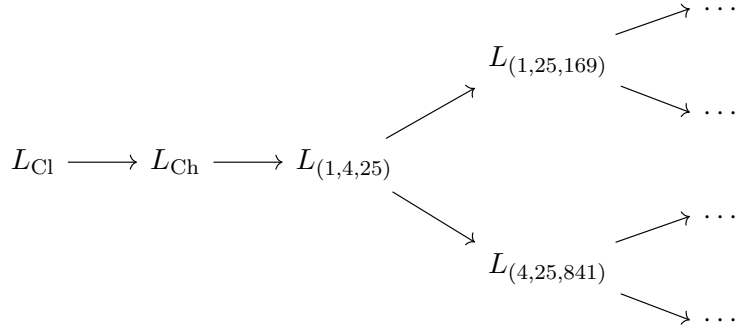
	We focus for the rest of the paper only on the first level of this procedure, which has been known since the work of Chekanov--Schlenk \cite{schlenk_chekanov_2010}. 
	
	We can distinguish tori of Clifford-type $L_{\text{Cl}}$ from tori of Chekanov-type $L_{\text{Ch}}$ by counting $J$-holomorphic disc classes in $H_2(\bCP^2,L)$. Following the results of Auroux \cite{Auroux_2007}, we have that there are 3 classes of Maslov 2 discs with boundary on $L_{\text{Cl}}$. Denote by $\alpha_1$ the disc class $w \mapsto [w:1:1]$, and by $\alpha_2$ the disc class $w \mapsto [1:w:1]$. Then $H_2(\bCP^2, L_{\text{Cl}})$ is generated by $\{\alpha_1,\alpha_2,Q-\alpha_1 -\alpha_2\}$ where $Q = [\bCP^1]$ is the hyperplane class. 
	
	On the other hand, consider the Chekanov torus in $\bC^2 = \bCP^2 -\{z=0\}$ given by 
	\[L_{\text{Ch}} = \left\{\left( \gamma(s) e^{i\alpha}, \gamma(s) e^{-i\alpha} \right) : s, \alpha \in \bR \right\},\]
	where $\gamma(s)\in \bC$ is an embedded closed curve not enclosing the origin. By an abuse of notation, let $\alpha$ denote the disc class of a disc with boundary given by the $\alpha$ coordinate, and by $\beta$ the disc class given by the $s$ coordinate. Then $H_2(\bCP^2,L_{\text{Ch}})$ is generated by $\{\alpha,\beta,Q\}$. However, the class $\alpha$ does not contain any holomorphic representatives - this is precisely the same reason the corresponding class can collapse for the Clifford torus $L_{\text{Cl}}$ under mean curvature flow in $\bC^2$ as demonstrated in \cite{Evans2018}. In fact, the Maslov 2 classes on $L_{\text{Ch}}$ are precisely
	\[\left\{\beta, Q-2\beta + \alpha, Q- 2\beta, Q-2\beta - \alpha\right\},\]
	each occurring with moduli space of holomorphic discs of dimension 1 except for the $Q-2\beta$ class which has dimension 2.
		
	\subsection{Lagrangian mean curvature flow}
	
	It is a fact first proven by Smoczyk \cite{Smoczyk1996} that the gradient-descent flow of area, i.e. mean curvature flow, preserves the Lagrangian condition in K\"ahler--Einstein manifolds. This gives rise to Lagrangian mean curvature flow. The preservation of the Lagrangian condition is no coincidence; for a Lagrangian in a K\"ahler--Einstein manifold, the mean curvature $\vec H$ can be written as a closed $1$-form $H$ on $L$, making use of the isomorphism between the tangent, normal and cotangent bundles of a Lagrangian given by the K\"ahler condition.
	
	Within the Lagrangian class, there are further preserved conditions. If the ambient manifold is Calabi--Yau, the most important preserved types are zero-Maslov and almost-calibrated, which have attracted a great deal of interest. For Fano manifolds, the monotone condition is the most important preserved quantity (we provide an elementary proof that the monotone condition is preserved in Section \ref{sec-CG-thm}.) 
	
	Singularities of the flow occur at times $T$ when $|A|^2 \to \infty$ as $t\to T$, where $A$ is the second fundamental form. As in hypersurface mean curvature flow, singularities for Lagrangian mean curvature flow can be divided into two types. A type I singularity occurs when the rate at which $|A|^2$ blows up is at most parabolic in time. All other singularities are of type II. As a general principle, the important preserved classes mentioned above do not have type I singularities; this statement is due to Wang \cite{Wang2001} in the Calabi--Yau case. We prove that monotone Lagrangians in Fano manifolds do not attain type I singularities in Section \ref{sec-monotone-type-I}.
	
	As an example, consider the case of Lagrangians in $\bCP^1 = S^2$. It is well-known that an embedded circle has a finite-time singularity if and only if it is not monotone. In this case, the singularity is of type I. On the other hand, monotone Lagrangians in $\bCP^1$, i.e. circles dividing the sphere into equal area pieces, exist for all time under mean curvature flow and converge to a geodesic equator. 
	
	Results of the latter type are known as Thomas--Yau-type results. The general idea is that the important preserved classes should exist for all time and converge to minimal Lagrangians. However, this is known to be false since for higher-dimensional examples, singularities are inevitable. Finite-time singularities are unavoidable, hence attention has turned to methods to resolve finite-time singularities via surgery in order to continue the flow. Geometric flows with surgery are well-studied, for instance the ground-breaking work of Perelman \cite{Perelman2002} on Ricci flow with surgery, or the work of Huisken--Sinistrarri \cite{Huisken2009} on mean convex mean curvature flow with surgery. However, without proper understanding of singularity formation, it is impossible to perform surgery. This has presented the most difficult obstacle to defining a Lagrangian mean curvature flow with surgery. 
	
	We note one final property of singularities of mean curvature flow. In general, geometric flows may have infinite-time singularities; indeed, this occurs in Ricci flow and in Yang--Mills flow, amongst others. In mean curvature flow however, one can rule out infinite-time singularities in certain cases. For simplicity, we state the following result of Chen--He \cite{chen_he_2010} as we need it in this paper, though it applies in a far wider generality.
	
	\begin{prop}\label{Prop-infinite-sing}
		Let $L$ be a Lagrangian mean curvature flow of a compact Lagrangian in a compact K\"ahler--Einstein manifold $M$ with $\kappa>0$.  Then $L$ either attains a finite-time singularity or has uniformly bounded $|A|^2$ for all time and converges subsequentially to a minimal Lagrangian submanifold in $M$ in infinite time.
	\end{prop}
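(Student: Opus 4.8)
The plan is to argue according to the maximal existence time of the flow. Write $F_t$ for the flow on its maximal interval $[0,T)$; by the standard theory of mean curvature flow, if $T<\infty$ then necessarily $\sup_{L_t}|A|^2\to\infty$ as $t\to T$, which is precisely a finite-time singularity and places us in the first alternative. It therefore suffices to assume $T=\infty$ and to prove, under this assumption, both the uniform bound $\sup_{t}\sup_{L_t}|A|^2<\infty$ and subsequential convergence to a minimal Lagrangian.

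First I would exploit area monotonicity. Along mean curvature flow $\frac{d}{dt}\Vol(L_t)=-\int_{L_t}|\vec H|^2\,d\mu_t\le 0$, and since $M$ is compact the areas $\Vol(L_t)$ are nonincreasing and bounded below, hence convergent; integrating in time gives the spacetime estimate $\int_0^\infty\int_{L_t}|\vec H|^2\,d\mu_t\,dt<\infty$. In particular there are times $t_i\to\infty$ with $\int_{L_{t_i}}|\vec H|^2\,d\mu_{t_i}\to 0$, and by Cauchy--Schwarz also $\int_{L_{t_i}}|\vec H|\,d\mu_{t_i}\to 0$. The slices $L_{t_i}$ thus have uniformly bounded area and first variation tending to zero, so by Allard's compactness theorem a subsequence converges as varifolds to a stationary, i.e. minimal, limit; this already yields subsequential convergence to a minimal object in a weak sense, with the Lagrangian condition surviving in the limit since it is preserved along the flow (Smoczyk).

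The substantial part is to upgrade this to a genuine uniform curvature bound, thereby excluding an infinite-time singularity, and then to smooth convergence. Here I would bring in Huisken's monotonicity formula in its localized form for a compact ambient manifold (the ambient curvature contributing controlled error terms because the flow stays in the compact $M$), together with White's local regularity theorem. The key is to show that the Gaussian density ratios of the flow approach those of a multiplicity-one plane as $t\to\infty$, so that for large $t$ they fall below the universal threshold of the regularity theorem; this produces a uniform estimate $\sup_{L_t}|A|^2\le C$ for all large $t$, which combined with the smoothness of the flow on any initial compact interval gives the uniform bound for all time. Once $|A|^2$ is uniformly bounded, the standard interior estimates (Ecker--Huisken/Shi type) bound all derivatives of $A$, the curvature bound prevents the compact Lagrangian from collapsing, and the family $\{L_t\}$ is precompact in the smooth topology; extracting a convergent subsequence along the $t_i$ and using $\int_{L_{t_i}}|\vec H|^2\to 0$ identifies the limit $L_\infty$ as a smooth minimal Lagrangian.

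I expect the regularity step to be the main obstacle: controlling the ambient-curvature error terms in the localized monotonicity formula and, crucially, ruling out higher-multiplicity or singular limit configurations so that the density ratios genuinely drop below White's threshold. This is exactly the delicate point which makes long-time existence insufficient on its own for bounded curvature, and it is where the compactness of $M$ and the structure of the monotonicity formula must be used in tandem.
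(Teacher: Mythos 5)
The paper does not actually prove this proposition: it is quoted verbatim from Chen--He \cite{chen_he_2010} precisely because ruling out infinite-time singularities is a substantial theorem in its own right. So the relevant question is whether your sketch would constitute an independent proof, and it would not. Your first alternative ($T<\infty$ forces $|A|^2\to\infty$), the area monotonicity giving $\int_0^\infty\int_{L_t}|\vec H|^2\,d\mu_t\,dt<\infty$, and the Allard compactness step producing a stationary varifold limit along a subsequence $t_i\to\infty$ are all correct and standard. But the entire content of the proposition is the passage from ``exists for all time'' to ``uniformly bounded $|A|^2$,'' i.e.\ the exclusion of an infinite-time singularity, and this is exactly the step you defer. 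The assertion that the Gaussian density ratios approach those of a multiplicity-one plane as $t\to\infty$ is essentially equivalent to the desired conclusion: if an infinite-time singularity were forming, the densities at the forming singular point would \emph{not} drop below White's threshold, so the argument cannot get started without already knowing what it is trying to prove. Ruling out higher-multiplicity or singular stationary limits is not a technical loose end here; it is the theorem.

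A second warning sign is that your argument never uses the hypothesis $\kappa>0$, nor the Lagrangian structure beyond its preservation. A proof of a statement that does not invoke one of its stated hypotheses should make you suspicious; the positivity of the Einstein constant (equivalently, the reaction term in $\partial_t H = dd^\dagger H+\kappa H$) is where the cited result gets its leverage, and an argument relying only on area decay and local regularity would, if correct, prove the same statement in the Calabi--Yau case, where no such result is known. To repair the proposal you would either need to supply the density estimate (which is the hard analysis in \cite{chen_he_2010}) or simply cite that paper, as the text does.
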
	
	
	\subsection{Holomorphic volume forms and the Lagrangian angle}
	
	Let $\Omega$ be a holomorphic volume form  defining a Lagrangian angle by $\Omega_L = e^{i\theta} \operatorname{vol}_L$. If $\Omega$ is parallel, as is the case in Calabi--Yau manifolds, we have the following important result:
	
	\begin{prop}\label{H = dtheta}
		Let $L$ be an oriented Lagrangian in a Calabi--Yau manifold $M$ with mean curvature 1-form $H$, where $H(X) = \omega(X, \vec H)$. Then $H=d \theta$, where $\theta$ is the Lagrangian angle.
	\end{prop}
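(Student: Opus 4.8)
The plan is to differentiate the Lagrangian angle directly in an adapted frame, using that $\Omega$ is parallel. Fix $p \in L$ and an oriented local orthonormal frame $e_1,\dots,e_n$ for $TL$; since $L$ is Lagrangian, $Je_1,\dots,Je_n$ is an orthonormal frame for the normal bundle $NL$, and the two together frame $TM$ along $L$. Normalising $\Omega$ so that $|\Omega(e_1,\dots,e_n)| = 1$, the defining equation $\Omega_L = e^{i\theta}\operatorname{vol}_L$ becomes the pointwise identity $\Omega(e_1,\dots,e_n) = e^{i\theta}$ of $\bC$-valued functions on $L$. For $X \in TL$ I would differentiate this, comparing
\[ X\bigl(e^{i\theta}\bigr) = i e^{i\theta}\, d\theta(X) \]
with the expansion obtained from $\nabla\Omega = 0$ and the Leibniz rule, namely $\sum_j \Omega(e_1,\dots,\nabla_X e_j,\dots,e_n)$.

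The heart of the argument is to evaluate this sum by splitting $\nabla_X e_j = (\nabla_X e_j)^\top + (\nabla_X e_j)^\perp$. Writing $(\nabla_X e_j)^\top = \sum_k \sigma_{jk}(X)\,e_k$ with $\sigma_{jk}$ the connection $1$-forms of the induced Levi--Civita connection, antisymmetry of $\Omega$ forces $k = j$ and antisymmetry of $\sigma_{jk}$ then annihilates the term, so the tangential contribution vanishes. For the normal part, write $(\nabla_X e_j)^\perp = A(X,e_j) = \sum_k h(X,e_j,e_k)\,Je_k$, where $A$ is the second fundamental form and $h(X,e_j,e_k) = g(A(X,e_j),Je_k)$. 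The key structural input is that $\Omega$, being of type $(n,0)$, is $\bC$-linear in each slot with respect to $J$, so that $\Omega(\dots,Je_k,\dots) = i\,\Omega(\dots,e_k,\dots)$; antisymmetry again retains only $k=j$, and one obtains $X\bigl(e^{i\theta}\bigr) = i e^{i\theta}\sum_j h(X,e_j,e_j)$.

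It remains to recognise $\sum_j h(X,e_j,e_j)$ as $H(X)$. Taking $X = e_i$ and computing $H(e_i) = \omega(e_i,\vec H) = \sum_j h(e_j,e_j,e_i)$, the two sums coincide provided the cubic form $h$ is fully symmetric. Symmetry in the first two slots is immediate from that of $A$; symmetry in the last two follows from $\nabla\omega = 0$ together with $F^*\omega = 0$, and this is the single genuinely Lagrangian ingredient that I expect to be the main (if standard) point to verify carefully. Once this is in hand, comparing $i e^{i\theta}\, d\theta(X)$ with $i e^{i\theta} H(X)$ and cancelling the nowhere-zero factor $i e^{i\theta}$ yields $d\theta = H$.
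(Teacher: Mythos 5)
Your proposal is correct and is essentially the same argument as the paper's: the paper simply quotes the identity $\overline{\nabla}_X \Omega = -i\,d\theta(X)\,\Omega + i H(X)\,\Omega$ from Thomas--Yau/Oh and invokes parallelism of $\Omega$, and your frame computation (Leibniz rule on $\Omega(e_1,\dots,e_n)=e^{i\theta}$, vanishing of the tangential connection terms, $\bC$-linearity of the $(n,0)$-form on the normal terms, and full symmetry of $h$) is precisely the derivation of that identity which the paper outsources to the references. The one point you flag, the total symmetry of $h(u,v,w)=g(A(u,v),Jw)$, is indeed the only genuinely Lagrangian--K\"ahler input and follows as you indicate from $\nabla J=0$ and $\omega|_L=0$.
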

	
	\begin{proof}
		Taking the ambient gradient $\overline \nabla_X$ of $\Omega$ with respect to any tangent vector $X$ yields the equality:
		\[\overline{\nabla}_X \Omega = -i d\theta(X) \cdot \Omega + i H(X).\]
		We refer the reader to Thomas--Yau \cite{Thomas2002} for the full calculation, which is first attributed to Oh \cite{Oh1994}. Since $\Omega$ is parallel, the result follows.
	\end{proof}
	
	Note that the above proof implies that for any holomorphic volume form $\Omega$ defining a Lagrangian angle by $\Omega_L = e^{i\theta} \operatorname{vol}_L$, we have
	\[H(X) \cdot \Omega  = d\theta(X) \cdot \Omega -i \nabla_X \Omega\]
	for any $X \in T_p L$, even without the parallel condition.
	
	Let $\{L_\alpha\}_{\alpha \in I}$ be a Lagrangian torus fibration of a subset $U \in M$ of a K\"ahler--Einstein manifold. For each $\alpha$, define a holomorphic volume form $\Omega_{L_\alpha}$ along $L_\alpha$, i.e. a unit section of the canonical bundle $K_{M|_{L_\alpha}}$, by 
	\begin{align*}
	\Omega_L(X_1,\dots,X_n) &= \operatorname{vol}_L(X_1,\dots,X_n), \\
	\Omega_L(JX_1,X_2,\dots,X_n) &= i \operatorname{vol}_L(X_1,\dots,X_n), \quad \text{etc.}
	\end{align*}
	for tangent vectors $X_i \in T_p L$. Now let $x\in U$. There is a unique $\alpha(x) \in I$ such that $x \in L_{\alpha(x)}$, so we define a section of $K_{M|_U}$ by 
	\[\Omega(x)(X_1,\dots,X_n) = \Omega_{L_{\alpha(x)}} (X_1,\dots,X_n),\]
	for $X_i \in T_x M$. We call $\Omega$ a relative holomorphic volume form (to the fibration $L_\alpha$). 
	
	In contrast to the Calabi--Yau case where $\Omega$ was always parallel, the volume form defined here is in general not parallel. In \cite{Lotay2018}, the form $\Omega_L$ is differentiated in tangent and normal directions. For tangent vector fields $X \in \Gamma(T L)$ we have
	\begin{equation}\label{Omega tangent deriv}	\overline \nabla_X \Omega_L = i H_L(X) \, \Omega_L \end{equation}
	where $H_L$ is the mean curvature 1-form on $L$. On the other hand, if $JY = \frac{\partial A_s}{\partial s}|_{s=0}$ is the normal vector field corresponding to a 1-parameter family $A_s:L \to M$ of Lagrangians immersions then the normal derivative is
	\begin{equation}\label{Omega normal deriv}	\overline \nabla_{JY} \Omega_L = -i \operatorname{div}_{L}(Y) \, \Omega_L.\end{equation}
	Now suppose the fibration $\{L_\alpha\}$ are the level sets of a moment map for an isometric Hamiltonian $T^n$-action on $U$. Since the action is an isometry, any vector field $\tilde X$ generated by the subgroup $\exp(tX)$ of $T^n$ has $\mathcal L_{\tilde X} \operatorname{vol}_{L_\alpha} = \operatorname{div}_{L_\alpha}(\tilde X) \operatorname{vol}_{L_\alpha} = 0$. Furthermore, since the action is Hamiltonian, $J\tilde X$ is a normal vector field corresponding to a 1-parameter family of Lagrangian immersions. So we have shown the following:
	
	\begin{thm}\label{thm-intro-H=dtheta}
		Let $\Omega$ be a holomorphic volume form on an open subset $U \subset M$ of a K\"ahler--Einstein manifold. If $L$ is a Lagrangian in $U$, then for any $X \in T_p L$ we have
		\[H(X) \cdot \Omega  = d\theta(X) \cdot \Omega -i \nabla_X \Omega,\]
		where $H$ is the mean curvature 1-form and $\theta$ is the Lagrangian angle of $L$ with respect to $\Omega$. 
		
		Suppose now that $M$ is an \textit{isometric toric manifold}, that is to say there is an isometric Hamiltonian action of $T^n$ on $M^{2n}$. Away from the singular points of the action, the level sets $\{L_\alpha\}$ are a Lagrangian fibration, and we can define $\Omega$ such that $\theta(L_\alpha) =0$ for all $\alpha$. Then for any $X=Y+JZ$ with $Y,Z \in T_p L$ we have
		\[H(X) = d\theta(X) + H_{L_\alpha(p)} (Y),\]
		where $H_{L_\alpha(p)}$ is the mean curvature 1-form of the unique Lagrangian $L_\alpha$ passing through $p$. 
	\end{thm}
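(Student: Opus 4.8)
The first assertion is essentially Proposition \ref{H = dtheta} with the parallel hypothesis removed, so I would dispatch it first. The plan is to recall Oh's pointwise identity $\overline\nabla_X \Omega = i\bigl(H(X) - d\theta(X)\bigr)\Omega$, whose derivation (the one attributed to Oh and reproduced in the proof of Proposition \ref{H = dtheta}) never uses the Calabi--Yau condition; that hypothesis enters only at the final step, where one sets $\overline\nabla_X\Omega = 0$. Keeping the left-hand side and rearranging immediately yields
\[H(X)\cdot\Omega = d\theta(X)\cdot\Omega - i\,\overline\nabla_X\Omega,\]
which is the first displayed equation of the theorem (here $\overline\nabla$ denotes the ambient covariant derivative written as $\nabla$ in the statement). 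No work is needed beyond checking that this identity is genuinely independent of the choice of $\Omega$ and of the parallel assumption.

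For the toric statement, the plan is to specialise this identity to the relative holomorphic volume form $\Omega$ built from the fibration $\{L_\alpha\}$, normalised so that $\Omega|_{L_\alpha} = \operatorname{vol}_{L_\alpha}$ and hence $\theta(L_\alpha) = 0$. Fix $p\in L$ and let $L_{\alpha(p)}$ be the fibre through $p$; since $L_{\alpha(p)}$ is Lagrangian I would use the splitting $T_pM = T_pL_{\alpha(p)} \oplus J\,T_pL_{\alpha(p)}$ to write $X = Y + JZ$ with $Y,Z$ tangent to the fibre. By linearity I then compute $\overline\nabla_X\Omega = \overline\nabla_Y\Omega + \overline\nabla_{JZ}\Omega$ term by term. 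Along the fibre the relative form agrees with $\Omega_{L_{\alpha(p)}}$, so the tangential piece is governed by \eqref{Omega tangent deriv} and gives $\overline\nabla_Y\Omega = i\,H_{L_{\alpha(p)}}(Y)\,\Omega$, while the normal piece is governed by \eqref{Omega normal deriv} and gives $\overline\nabla_{JZ}\Omega = -i\operatorname{div}_{L_{\alpha(p)}}(Z)\,\Omega$. Substituting both into the identity of the first part and cancelling the nowhere-vanishing factor $\Omega$ produces $H(X) = d\theta(X) + H_{L_{\alpha(p)}}(Y)$ the instant one knows the divergence term vanishes.

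The main obstacle, and the only genuinely geometric input, is therefore showing $\operatorname{div}_{L_{\alpha(p)}}(Z) = 0$ for every $Z$ tangent to the fibre. Here I would use that $M$ is an isometric toric manifold: the fibres $L_\alpha$ are exactly the $T^n$-orbits, so $T_pL_{\alpha(p)}$ is spanned by the fundamental vector fields $\tilde X_1,\dots,\tilde X_n$ of the action. Each $\tilde X_i$ is Killing, so $\mathcal L_{\tilde X_i}\operatorname{vol}_{L_\alpha} = \operatorname{div}_{L_\alpha}(\tilde X_i)\operatorname{vol}_{L_\alpha} = 0$, and writing $Z = \sum_i c_i\,\tilde X_i(p)$ with constant coefficients gives $\operatorname{div}_{L_{\alpha(p)}}(Z) = \sum_i c_i\operatorname{div}_{L_{\alpha(p)}}(\tilde X_i) = 0$. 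The point requiring the most care is the bookkeeping that the normal derivative of the \emph{relative} form $\Omega$ is the one computed by \eqref{Omega normal deriv}: moving in the direction $JZ$ carries $L_{\alpha(p)}$ to an infinitesimally nearby fibre, so the one-parameter family of Lagrangian immersions in \eqref{Omega normal deriv} is precisely the fibration itself, and the Hamiltonian nature of the action is what guarantees that each $J\tilde X_i$ is the normal field of such a family. With that identification in place, the two evolution equations assemble into the claimed formula.
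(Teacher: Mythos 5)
Your proposal is correct and follows essentially the same route as the paper: the first identity is obtained by dropping the parallel hypothesis from Oh's computation in Proposition \ref{H = dtheta}, and the toric statement follows by splitting $X=Y+JZ$, applying \eqref{Omega tangent deriv} and \eqref{Omega normal deriv} to the relative form, and killing the divergence term via the Killing/Hamiltonian properties of the fundamental vector fields — exactly the discussion the paper gives immediately before stating the theorem.
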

	
	This allows us to relate the curvature of a Lagrangian to the curvature of a fibration via the Lagrangian angle. Choosing a fibration with easily calculable curvature, this will vastly simplify the calculation of mean curvature. We use this technique in Section \ref{sec-equiv}.
	
	\subsection{Evolution equations}
	
	The following calculation appears in Thomas--Yau \cite{Thomas2002}, but the results were known by Oh \cite{Oh1994} and Smoczyk \cite{Smoczyk1999}. For any holomorphic volume form $\Omega$ defining a Lagrangian angle by $\Omega_L = e^{i\theta} \operatorname{vol}_L$, we have
	\begin{align*}
	i \frac{\partial}{\partial t} \theta e^{i\theta} \operatorname{vol}_L + e^{i\theta} \frac{\partial}{\partial t}\operatorname{vol}_L =& \frac{\partial}{\partial t} \left( e^{i\theta} \operatorname{vol}_L \right) = \mathcal L_{V}  \Omega_L = d\left( V  \lrcorner  \Omega_L \right) = -i \, d\left(e^{i\theta} (JV) \lrcorner \operatorname{vol}_L \right) \\
	=& e^{i\theta} d\theta \wedge \left(JV \lrcorner \operatorname{vol}_L \right)  -i e^{i\theta} d^\dagger (JV) \operatorname{vol}_L
	\end{align*} 
	In Calabi--Yau manifolds, we have that $\vec H = J\nabla \theta$, and hence under mean curvature flow where $V = \vec H$ we have the evolution equations
	\begin{equation}\label{eqn-ev-theta-vol}
	\begin{split}
	\frac{\partial}{\partial t} \theta &= d^\dagger d \theta = \Delta \theta \\
	\frac{\partial}{\partial t} \operatorname{vol}_L &= -|\vec H|^2 \operatorname{vol}_L \, .
	\end{split}
	\end{equation}
	The mean curvature 1-form $H$ satisfies the evolution equation
	\begin{equation}\label{H-ev}\frac{\partial}{\partial t} H = d d^\dagger H + \kappa H,\end{equation}
	where $\kappa$ is the Einstein constant, i.e. $ \rho = \kappa  \omega$. It is clear then that the cohomology class $[He^{-\kappa t}]$ is preserved under the flow. In particular, $H$ exact is preserved.
	
	\subsection{The Cieliebak--Goldstein theorem}\label{sec-CG-thm}
	
	Recall that the space of Lagrangian subspaces $\mathcal L(n)$ in $\bR^{2n}$ is isomorphic to $U(n)/O(n)$, and hence $\det^2$ induces an isomorphism from $\mu:\pi_1(\mathcal L(n)) \to \bZ$, called the Maslov index. The Maslov class of a disc is defined to be the Maslov index of the boundary under any local trivialisation. Then we have the following theorem of Cieliebak--Goldstein \cite{Cieliebak2004}, which is fundamental to the rest of this paper:
	\begin{thm}[Cieliebak--Goldstein]\label{thm-CG-OG}
		In a K\"ahler--Einstein manifold $M$ with Einstein constant $\kappa$, the mean curvature 1-form $H$ of a Lagrangian $F:L\to M$ is related to the Maslov class $\mu$ of a disc $u:(D,\partial D) \to (M,L)$ by \footnote{Here and throughout the rest of this paper, we abuse notation by conflating forms with their pullbacks and curves in the image of a Lagrangian with their pre-image. For instance, in (\ref{CG formula}), 
			\[\int_D \omega = \int_D u^*\omega, \quad \int_{\partial D} H  = \int_{F^{-1} (u(\partial D))} H.\]}
		\begin{equation}\label{CG formula}\kappa \int_D  \omega - \pi \mu(D) = -\int_{\partial D} H. \end{equation}
	\end{thm}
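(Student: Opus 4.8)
The plan is to integrate the pointwise identity from Theorem \ref{thm-intro-H=dtheta} over the boundary of the disc and relate the Lagrangian angle to the Maslov class. Recall that for any holomorphic volume form $\Omega$ defining a Lagrangian angle via $\Omega_L = e^{i\theta} \operatorname{vol}_L$, we have the identity
\[
H(X) \cdot \Omega = d\theta(X) \cdot \Omega - i \nabla_X \Omega
\]
for every $X \in T_p L$. The key point is that $\Omega$ is a \emph{local} section of the canonical bundle $K_M$ that need not be globally defined, and its failure to be parallel is precisely what encodes the curvature; indeed, on a K\"ahler--Einstein manifold the Chern--Ricci form of $K_M$ is $\rho = \kappa\,\omega$, so the connection on $K_M$ has curvature $-i\kappa\,\omega$.

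First I would work over the disc $u:(D,\partial D) \to (M,L)$. Since $D$ is contractible, I can choose a global nonvanishing section $s$ of the pullback $u^* K_M$ over $D$, and use it to trivialise. The Lagrangian angle $\theta$ along $\partial D$ is then a well-defined real function \emph{up to} the winding of $\Omega$ against this trivialisation, and that winding is exactly the Maslov index: the total change of $\theta$ around $\partial D$ is $\int_{\partial D} d\theta = \pi\,\mu(D)$, by definition of the Maslov class as the degree of $\det{}^2$ applied to the boundary loop of Lagrangian tangent planes. (The factor $\pi$ rather than $2\pi$ comes from $\det{}^2$ winding twice as fast as the phase $e^{i\theta}$.)

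Next I would integrate $H$ over $\partial D$ using the pointwise identity. Taking the imaginary part of $H(X)\,\Omega = d\theta(X)\,\Omega - i\nabla_X\Omega$ against the reference section, the term $\int_{\partial D} d\theta$ contributes $\pi\mu(D)$ as above, and the term $-i\int_{\partial D}\nabla_X\Omega$ contributes the holonomy of the canonical connection around $\partial D$. By Stokes' theorem this holonomy equals the integral of the curvature of $K_M$ over $D$, namely $\int_D \kappa\,\omega = \kappa\int_D\omega$. Collecting the boundary terms yields
\[
\int_{\partial D} H = \pi\,\mu(D) - \kappa\int_D \omega,
\]
which rearranges to (\ref{CG formula}).

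The main obstacle, and the step requiring the most care, is the bookkeeping of the connection term $-i\nabla_X\Omega$: I must verify that integrating it around $\partial D$ genuinely reproduces $\kappa\int_D\omega$ with the correct sign and normalisation, via the identification of the curvature of the canonical line bundle with $-i\rho = -i\kappa\,\omega$. This demands fixing orientation and sign conventions consistently between the Maslov class (defined through $\det{}^2$) and the phase of $\Omega$, and checking that the choice of local $\Omega$ used pointwise in Theorem \ref{thm-intro-H=dtheta} patches against the global trivialisation over the contractible disc without introducing spurious monodromy. Everything else is a routine application of Stokes' theorem once this normalisation is pinned down.
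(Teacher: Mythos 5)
Your argument is correct, and it is essentially the standard proof of the Cieliebak--Goldstein formula. Note that the paper itself does not prove Theorem \ref{thm-CG-OG} at all --- it is quoted from \cite{Cieliebak2004} --- so there is no in-paper proof to compare against; but your route is exactly the one the surrounding machinery is built for. Concretely: the pointwise identity $H(X)\cdot\Omega = d\theta(X)\cdot\Omega - i\nabla_X\Omega$ holds for any unit-norm local section $\Omega$ of $K_M$ (the paper states this explicitly after Proposition \ref{H = dtheta}), and for a line bundle one may write $\nabla\Omega = ia\otimes\Omega$ with $a$ a real connection $1$-form, so that $H = d\theta + a$ along $L$. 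Choosing a unit-norm trivialisation of $u^*K_M$ over the contractible disc, $\oint_{\partial D} d\theta = \pi\mu(D)$ (the $\det^2$ winding, as you say), and $\oint_{\partial D} a = \int_D da = -\int_D \rho = -\kappa\int_D\omega$ by Stokes and the K\"ahler--Einstein condition, which assembles into (\ref{CG formula}). The two points you flag as needing care are the right ones, and both are benign: any two trivialisations of a line bundle over a disc are homotopic, so no spurious monodromy enters the Maslov count, and the sign of the curvature of $K_M$ is pinned down by checking the formula against the minimal Clifford torus (where $H=0$, $\mu=4$, $\int_D\omega = 2\pi/3$, $\kappa=6$), exactly as the paper does in Section \ref{sec-equiv}. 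The one cosmetic caveat is that the paper's Theorem \ref{thm-intro-H=dtheta} constructs $\Omega$ from a Lagrangian fibration, whereas you need only an arbitrary unit section of $u^*K_M$ over $D$; the identity you invoke is the fibration-free version stated just after Proposition \ref{H = dtheta}, so you should cite that rather than the fibration statement.
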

	
	We call a Lagrangian submanifold monotone if for any disc $u:(D, \partial D) \to (M,L)$, 
	\begin{equation}\label{eq-mono}
	\int_D  \omega = c \mu(D),
	\end{equation}
	for a constant $c$ dependent on $M$ and $L$ but not $u$. We call a disc $u$ Maslov $m$ if the $\mu(u) = m$.  In the case of a monotone Lagrangian in an exact Calabi--Yau manifold (i.e. $ \omega = d \lambda$), and in view of (\ref{CG formula}), we see that (\ref{eq-mono}) is equivalent to 
	\[\int_{\partial D} \lambda = \int_D  \omega = c \mu(D) =  \frac{c}{\pi} \int_{\partial D} H =  \frac{c}{\pi} \int_{\partial D} d\theta,\]
	hence in the literature for Lagrangian mean curvature flow where the Calabi--Yau case (specifically $\bC^n$) is frequently the primary focus, the definition of monotone is often taken as 
	\[[\lambda] = C[d\theta].\]
	
	\begin{rem}
		The Cieliebak--Goldstein formula is a generalisation of the Gauss-Bonnet formula. When $M$ is a surface, $\omega$ is the Riemannian volume form on $M$, and so $M$ Einstein implies that \[\int_D K \operatorname{vol}(D) = \kappa \int_D \omega,\]
		where $K$ is the Gauss curvature. Moreover, all curves are Lagrangian so 
		\[\int_{\partial D} k_g \operatorname{vol(\partial D)}  = \int_{\partial D} H\]
		where $k$ is the geodesic curvature. The Euler characteristic of a disc is 1, and the Maslov class of a holomorphic disc in a symplectic surface is 1 by definition.   
	\end{rem}
	
	The above remark helps to motivate a mild generalisation of the Cieliebak--Goldstein formula to include $J$-holomorphic polygons with boundary on multiple intersecting Lagrangians, comparable to generalising Gauss--Bonnet with a smooth boundary to a piecewise-smooth boundary with corners and turning angles.
	\begin{thm}\label{thm-CG-gen}
		Let $L_1,\dots,L_m$ be Lagrangian in $M$ and let 
		\[u:(D,(\partial D_1,\dots,\partial D_m)) \to (M,(L_1,\dots,L_m))\]
		denote a map from the unit disc with $m$ marked points $p_i$ on the boundary to $M$, mapping $p_i$ to $L_i \cap L_{i-1}$ and mapping the arc $\partial D_i$ from $p_i$ to $p_{i+1}$ to $L_i$. Then 
		\begin{equation}\label{eqn-CG-gen}\kappa \int_D \omega - \pi \tilde \mu(D) = -\sum_i \int_{\partial D_i} H_{L_i}
		,\end{equation}
		where $\tilde \mu$ is the Maslov class of $u:(D,(\partial D_1,\dots,\partial D_m)) \to (M,(L_1,\dots,L_m))$, defined in the proof below.
	\end{thm}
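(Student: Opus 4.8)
The plan is to run the proof of the smooth case, Theorem~\ref{thm-CG-OG}, arc by arc, and to \emph{define} $\tilde\mu$ so that the contributions of the corners are entirely Maslov-theoretic; this is the polygonal analogue of passing from Gauss--Bonnet with smooth boundary to Gauss--Bonnet with corners, where the turning angles are booked into the index rather than appearing as separate terms. The engine of the smooth proof is the structure equation of Theorem~\ref{thm-intro-H=dtheta}, $H(X)\cdot\Omega = d\theta(X)\cdot\Omega - i\nabla_X\Omega$, which says that, measured against the Chern connection on the canonical bundle $K_M$, the covariant phase of the Lagrangian-adapted section $\Omega_L$ changes at rate $i\bigl(H_L - d\theta_L\bigr)$ along the Lagrangian. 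Since the curvature of this connection is a constant multiple of the Ricci form $\rho = \kappa\,\omega$, integrating the phase around a contractible loop and applying Stokes' theorem on $D$ is exactly what produces the two terms $\kappa\int_D\omega$ and $\pi\mu(D)$ in \eqref{CG formula}.

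First I would fix a nonvanishing section $\sigma$ of $u^*K_M$ over the contractible disc $D$, and along each arc $\partial D_i$ compare it with the Lagrangian-adapted unit section $\Omega_{L_i}$, writing $\Omega_{L_i} = e^{i\phi_i}\,|\Omega_{L_i}/\sigma|\,\sigma$ for a phase $\phi_i$ on $\partial D_i$. The winding of $e^{2i\phi}$ around the whole boundary is what the Maslov index measures via $\mathcal L(n)\cong U(n)/O(n)$ and $\det^2$. Reading off the boundary path $s\mapsto T_{u(s)}L_i$ in $\mathcal L(n)$ along each arc and inserting at each marked point $p_i$ a fixed canonical path from $T_{p_i}L_{i-1}$ to $T_{p_i}L_i$ (the path determined by the pair of transverse Lagrangian planes, rotating through the K\"ahler angle), I obtain an honest loop in $\mathcal L(n)$; I then \emph{define} $\tilde\mu(D)$ to be the degree of $\det^2$ on this loop. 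By construction $\pi\tilde\mu(D)$ is the total winding of the Lagrangian phase, split as the sum of the continuous windings along the arcs together with the discrete closing windings prescribed at the corners.

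With this in hand the assembly is immediate, since $H_{L_i}$ is integrated only over the smooth arcs $\partial D_i$ and the corners, being points, contribute nothing to $\sum_i\int_{\partial D_i}H_{L_i}$. Applying the structure equation on each arc and Stokes' theorem over $D$ for the curvature of $K_M$ expresses the sum of the arc windings as $\kappa\int_D\omega + \sum_i\int_{\partial D_i}H_{L_i}$ minus the total jump of the Lagrangian angle across the corners (the non-cancellation of $\sum_i\int_{\partial D_i}d\theta_{L_i}$ around the piecewise-smooth loop). Because $\pi\tilde\mu(D)$ is the sum of these arc windings \emph{plus} the closing-path windings at the corners, and the canonical closing paths are chosen precisely to restore that total jump, one obtains $\pi\tilde\mu(D) = \kappa\int_D\omega + \sum_i\int_{\partial D_i}H_{L_i}$, which is \eqref{eqn-CG-gen}.

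The main obstacle, and the only genuinely new point over the smooth case, is the corner convention: one must choose the connecting paths in $\mathcal L(n)$ so that $\tilde\mu(D)$ depends only on the data of the polygon (and not on the auxiliary section $\sigma$), and so that the winding prescribed at each $p_i$ is exactly the Lagrangian-angle jump left over by the Stokes computation. Equivalently, one must verify that the ``turning angle'' at a Lagrangian corner is purely a phase-winding quantity carrying no residual mean-curvature contribution; this is what makes the clean, turning-angle-free form of \eqref{eqn-CG-gen} correct, and it is where care with orientations and with the transversality of consecutive Lagrangians at the $p_i$ is required. A convenient cross-check is the surface case of the preceding remark, where $\tilde\mu$ should reduce to the Euler characteristic together with the exterior angles, recovering Gauss--Bonnet with corners.
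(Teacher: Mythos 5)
Your proposal is correct and follows essentially the same route as the paper, which itself only states that the proof is the generalisation of Gauss--Bonnet from smooth to cornered boundaries and defers the details to \cite{Evans_2022}: you run the Chern-connection/Stokes argument of the smooth Cieliebak--Goldstein theorem arc by arc and absorb the corner phase-jumps into the definition of $\tilde\mu$ via canonical connecting paths in $\mathcal L(n)$. This is consistent with how the paper actually computes $\tilde\mu$ in Example \ref{ex-triangle}, where the Maslov number is split into a topological part plus corner contributions given by differences of Lagrangian angles of the tangent planes.
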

	
	\begin{proof}
		The proof is equivalent to the generalisation of the Gauss--Bonnet formula from surfaces without corners to surfaces with corners. We refer the reader to \cite{Evans_2022} for a complete description.
	\end{proof}
	
	Let us now consider the implications of the Cieliebak--Goldstein formula for Lagrangian mean curvature flow. From (\ref{CG formula}) and the evolution equation (\ref{H-ev}) we obtain 
	\begin{equation}\label{KE-area-ev}
	\begin{split} 
	\frac{\partial}{\partial t} \int_D \omega =-\frac{1}{\kappa} \int_{\partial D} dd^\dagger H + \kappa H &= -\int_{\partial D} H \\
	&= \kappa \int_{\partial D} \omega - \pi \mu(D).
	\end{split}	
	\end{equation}
	
	$L$ is monotone when $\mu(D)$ is proportional to $\int_D \omega$, so we note two immediate corollaries for $\kappa \neq 0$.
	\begin{cor}\label{Exact is monotone}
		Let $L$ be a Lagrangian in a K\"ahler--Einstein manifold with $\kappa \neq 0$. 
		$H$ is exact if and only if $L$ is monotone with monotone constant $\pi/\kappa$.
	\end{cor}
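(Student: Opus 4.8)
The plan is to read both implications directly off the Cieliebak--Goldstein formula (\ref{CG formula}), since it already packages the relationship between $\int_D \omega$, the Maslov class $\mu(D)$, and the boundary integral $\int_{\partial D} H$. The only genuinely non-formal ingredient is the passage, in the reverse direction, from the vanishing of $\int_{\partial D}H$ over all disc boundaries to the exactness of the closed $1$-form $H$.

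For the forward implication I would suppose $H$ is exact, say $H = df$. Then for any disc $u:(D,\partial D)\to(M,L)$ the boundary $\partial D$ maps to a closed loop in $L$, so $\int_{\partial D} H = \int_{\partial D} df = 0$ by Stokes. Substituting into (\ref{CG formula}) gives $\kappa \int_D \omega = \pi \mu(D)$ for every such disc, which is exactly the monotonicity condition (\ref{eq-mono}) with constant $c = \pi/\kappa$.

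For the reverse implication I would suppose $L$ is monotone with constant $\pi/\kappa$, i.e. $\int_D\omega = (\pi/\kappa)\,\mu(D)$ for all $D \in \pi_2(M,L)$. Feeding this back into (\ref{CG formula}) yields
\[-\int_{\partial D} H = \kappa \int_D \omega - \pi\mu(D) = \pi\mu(D) - \pi\mu(D) = 0,\]
so $\int_{\partial D} H$ vanishes on every loop arising as the boundary of a disc in $(M,L)$. Since $H$ is closed, to conclude that it is exact it suffices to show that $[H]$ pairs to zero against every class in $H_1(L;\bR)$, equivalently that the boundaries $\{\partial D\}$ generate $H_1(L;\bR)$. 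This is where the real work lies, and I expect it to be the main obstacle: by the long exact sequence of the pair $(M,L)$, the image of $\partial:\pi_2(M,L)\to\pi_1(L)$ equals the kernel of $\pi_1(L)\to\pi_1(M)$, so when $M$ is simply connected (as for $\bCP^2$) every loop in $L$ bounds a disc in $M$ and the argument closes immediately. In the general $\kappa\neq0$ case one should either invoke simple-connectedness of $M$ or restrict the claim to the subspace of $H_1(L;\bR)$ spanned by disc boundaries.

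Finally, I would record why the constant is forced to be exactly $\pi/\kappa$: if instead $L$ were monotone with some $c\neq\pi/\kappa$, then (\ref{CG formula}) would give $\int_{\partial D}H = (\pi-\kappa c)\mu(D)$, so $[H]$ would be a nonzero multiple of the Maslov class rather than zero. Thus exactness of $H$ singles out precisely the monotone constant $\pi/\kappa$, which is what pins down both directions of the equivalence.
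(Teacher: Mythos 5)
Your proof is correct and follows essentially the same route as the paper, which states the corollary as an immediate consequence of the Cieliebak--Goldstein formula (\ref{CG formula}) together with the observation that exactness of the closed $1$-form $H$ kills the boundary term. The caveat you raise in the reverse direction is a genuine one that the paper leaves implicit: vanishing of $\int_{\partial D}H$ over disc boundaries only controls $[H]$ on the image of $\partial:\pi_2(M,L)\to H_1(L;\bR)$, so full exactness requires that disc boundaries span $H_1(L;\bR)$ --- which holds in the paper's setting since $\bCP^2$ is simply connected, but should be stated as a hypothesis (or the conclusion weakened) for a general K\"ahler--Einstein $M$ with $\kappa\neq 0$.
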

	
	\begin{cor}\label{Monotone preserved}
		Monotone Lagrangians are preserved under mean curvature flow. When $\kappa \neq 0$, the monotone constant $\pi/\kappa$ is invariant under the flow.
	\end{cor}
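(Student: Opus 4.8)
The plan is to read the result straight off the area evolution equation (\ref{KE-area-ev}), reducing the whole statement to a single scalar linear ODE that is uniform across disc classes. First I would fix a disc class $D\in\pi_2(M,L)$ and transport it along the flow. Since Lagrangian mean curvature flow is a smooth isotopy $F_t$, it carries $D$ to a continuously varying family $D_t$ with $\partial D_t\subset L_t$; because the Maslov index is an integer homotopy invariant, $\mu(D_t)=\mu(D)$ is constant in $t$. Writing $A_D(t)=\int_{D_t}\omega$, equation (\ref{KE-area-ev}) reads
\[A_D'(t)=\kappa A_D(t)-\pi\mu(D),\]
in which $D$ enters only through the time-independent integer $\mu(D)$.

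Next I would exploit the uniformity of this equation in $D$. Suppose $L_0$ is monotone with constant $c$, so that $A_D(0)=c\,\mu(D)$ for every $D$ by (\ref{eq-mono}). For any disc with $\mu(D)\neq 0$, set $r_D(t)=A_D(t)/\mu(D)$; dividing through by $\mu(D)$ gives $r_D'(t)=\kappa r_D(t)-\pi$, an equation that no longer references $D$ and whose initial value $r_D(0)=c$ is common to all such discs. By uniqueness of solutions to this linear ODE, $r_D(t)$ equals a single function $c(t)$, independent of $D$. Discs with $\mu(D)=0$ satisfy $A_D(0)=0$ and $A_D'=\kappa A_D$, hence $A_D(t)\equiv 0=c(t)\mu(D)$. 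Thus $A_D(t)=c(t)\mu(D)$ holds for all $D$, i.e. $L_t$ is again monotone (with, in general, a time-dependent constant $c(t)$), which is the first assertion.

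Finally, for the invariance when $\kappa\neq 0$ I would note that $r'=\kappa r-\pi$ has the unique stationary point $r=\pi/\kappa$; solving explicitly, $c(t)=\tfrac{\pi}{\kappa}+e^{\kappa t}\bigl(c-\tfrac{\pi}{\kappa}\bigr)$, so the value $\pi/\kappa$ is fixed for all time. The only points needing genuine care are the homotopy-invariance of $\mu(D)$ along the flow and the fact that (\ref{KE-area-ev}) holds uniformly over every disc class; the remaining analysis is a one-line ODE.

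Equivalently, and more in the spirit of the preceding discussion, one can argue cohomologically: by (\ref{CG formula}) monotonicity of $L$ is equivalent to $[H]\in H^1(L;\bR)$ being proportional to the topologically fixed Maslov class, and since $[H_t e^{-\kappa t}]$ is preserved the class $[H_t]=e^{\kappa t}[H_0]$ stays proportional to it. The constant-$\pi/\kappa$ case is then immediate from Corollary \ref{Exact is monotone}, as $H$ exact is preserved.
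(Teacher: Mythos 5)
Your argument is correct, and your closing paragraph is in fact the paper's entire proof: the paper disposes of $\kappa\neq 0$ in one line by combining Corollary \ref{Exact is monotone} with the preservation of exactness of $H$ from (\ref{H-ev}), exactly as you say. Your primary argument — transporting each disc class along the isotopy, noting $\mu(D_t)$ is a constant integer, and solving the linear ODE $A_D'=\kappa A_D-\pi\mu(D)$ per class — is a more explicit route that buys slightly more: it shows monotonicity is preserved for \emph{any} initial monotone constant $c$, with the explicit evolution $c(t)=\tfrac{\pi}{\kappa}+e^{\kappa t}\bigl(c-\tfrac{\pi}{\kappa}\bigr)$, it identifies $\pi/\kappa$ as the unique stationary value, and it handles $\kappa=0$ uniformly rather than by appeal to the earlier Calabi--Yau discussion. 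The only points that genuinely need care are the ones you flag — homotopy invariance of $\mu$ under the flow isotopy and the uniform validity of (\ref{KE-area-ev}) over disc classes — and both are fine here since the flow is smooth up to the first singular time. No gaps.
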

	
	\begin{proof}
		The result has been shown already for $\kappa = 0$. For $\kappa \neq 0$, the result follows from Corollary \ref{Exact is monotone} and the fact that exactness of $H$ is preserved by equation (\ref{H-ev}).
	\end{proof}

	To illustrate the theory so far, we consider the best understood example of Lagrangian mean curvature flow in non-Ricci-flat manifolds. 
	
	Consider the two sphere $S^2=\bCP^1$ with the standard K\"ahler metric and let $\gamma$ be an embedded closed curve in $S^2$. Then there are, up to reparametrisation, exactly two $J$-holomorphic discs $u_1,u_2 :D \to  S^2$ with $u_i(\partial D)  = \gamma$. We have that $\gamma$ is monotone when 
	\[\int_D u_1^* \omega = \int_D u_2^* \omega,\]
	where $\omega$ is the standard Fubini--Study form on $\bCP^1  = S^2$, i.e. when $\gamma$ divides $S^2$ into two pieces of equal area. Then we have two behaviours:
	
	\begin{prop}\label{prop-mcf-spher}
		{$\ $}\\[-3ex]
		\begin{enumerate}
			\item If $\gamma$ is not monotone, $\gamma$ attains a type I singularity in finite time with blow-up a self-shrinking circle.
			\item If $\gamma$ is monotone, mean curvature flow exists for all time and converges in infinite time to a great circle.
		\end{enumerate}
	\end{prop}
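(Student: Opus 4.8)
The plan is to reduce both statements to a single scalar ODE governing the areas of the discs bounded by $\gamma$, and then to supply the singularity analysis from the one-dimensional theory of curve shortening flow. Since $S^2 = \bCP^1$ is a symplectic surface, every embedded closed curve $\gamma$ is automatically Lagrangian and bounds exactly two $J$-holomorphic discs $u_1, u_2$, each of Maslov class $1$ (as recorded in the remark following Theorem \ref{thm-CG-OG}). Writing $a_i(t) = \int_{u_i}\omega$ and applying the area evolution equation (\ref{KE-area-ev}) with $\kappa = 4$ and $\mu(u_i)=1$, I obtain the decoupled linear ODEs
\[ \frac{\rd}{\rd t} a_i = 4 a_i - \pi, \qquad i = 1,2. \]
Adding these and using that $a_1 + a_2 = \Vol(S^2)$ is constant forces the total area to equal $\pi/2$, so the two sides are linked by $a_2 = \tfrac{\pi}{2} - a_1$; the unique fixed point is $a_1 = a_2 = \pi/4$, which by Corollary \ref{Exact is monotone} is exactly the monotone configuration. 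The key qualitative feature is that this fixed point is \emph{unstable}, since the linearisation has positive coefficient $4$.

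For part (1), suppose $\gamma$ is not monotone, so after relabelling $a_1(0) < \pi/4$. Solving the ODE gives $a_1(t) = \tfrac{\pi}{4} + \bigl(a_1(0) - \tfrac{\pi}{4}\bigr) e^{4t}$, which decreases to $0$ at the finite time $T = \tfrac14\log\frac{\pi/4}{\pi/4 - a_1(0)}$. Thus the smaller disc collapses, forcing a finite-time singularity at $T$. To identify it, I would invoke the special structure of curve shortening flow on a surface: embeddedness is preserved, so the only singularity an embedded curve can form is by shrinking to a point, and the collapse of $a_1$ to zero pins this point. Parabolic rescaling then produces a compact embedded self-shrinker for curve shortening flow in the tangent plane $\cong \bR^2$; by the Abresch--Langer classification the only such self-shrinker is the round circle, whose curvature obeys $|A|^2 \sim (T-t)^{-1}$. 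This yields a type I singularity with self-shrinking circle blow-up. (The linear decay $a_1(t) \sim \pi(T-t)$ is itself the area signature of a shrinking circle, which serves as a useful consistency check.)

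For part (2), suppose $\gamma$ is monotone. By Corollary \ref{Monotone preserved} monotonicity is preserved, so $a_1(t) \equiv a_2(t) \equiv \pi/4$ for all time; in particular neither disc ever collapses. By the curve-shortening dichotomy used above, a finite-time singularity would require shrinking to a point and hence collapse of one area, which cannot happen here. Therefore the flow has no finite-time singularity, and Proposition \ref{Prop-infinite-sing} applies: $|A|^2$ is uniformly bounded for all time and $\gamma$ converges subsequentially to a minimal Lagrangian in $S^2$. Minimal Lagrangians in $\bCP^1$ are precisely the geodesics, i.e.\ great circles, which completes the proof.

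The easy and robust part is the area ODE, which follows immediately from (\ref{KE-area-ev}) and already separates the two regimes through the (in)stability of the fixed point $\pi/4$. The main obstacle is the singularity classification in part (1): showing that the singularity is genuinely type I, and that the blow-up is a round circle rather than some higher-multiplicity or non-embedded self-shrinker. This is precisely the input that does \emph{not} come from the general Lagrangian machinery of the paper and instead relies on the one-dimensional theory---Grayson's theorem for embedded curves on surfaces together with the Abresch--Langer classification of self-shrinkers. I expect this to be the only place where genuinely flow-specific, rather than purely symplectic, arguments are required.
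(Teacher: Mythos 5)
Your proof is correct and follows essentially the same route as the paper: the paper's own proof is exactly ``Grayson's theorem plus the area evolution equation (\ref{KE-area-ev})'', which is what you carry out in more detail (with Abresch--Langer and Proposition \ref{Prop-infinite-sing} as optional extra scaffolding, since Grayson's dichotomy already supplies both the round-point blow-up and the convergence to a geodesic). One small numerical slip, inherited from the paper's remark after Theorem \ref{thm-CG-OG}: the Maslov class of a holomorphic disc with embedded Lagrangian boundary in a symplectic surface is $2$, not $1$ (consistency with Gauss--Bonnet requires $\pi\mu(D)=2\pi\chi(D)$, and the standard disc in $\bC$ has $\det^2$ winding twice), so the ODE should read $\tfrac{\rd}{\rd t}a_i = 4a_i - 2\pi$ with unstable fixed point $a_i=\pi/2$ and total area $\pi$; this changes the constants but none of the qualitative conclusions.
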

	
	\begin{proof}
		Recall Grayson's theorem \cite{grayson_1989}: curve-shortening flow in surfaces either attains finite-time singularities with type I blow-up a shrinking circle, or exists for all times and converges to a geodesic. This the result follows from (\ref{KE-area-ev}) in both cases.
	\end{proof}

	\section{Type I singularities in Fano manifolds}\label{sec-monotone-type-I}
	
	We saw that monotone curves did not attain type I singularities: heuristically, any type I singularity would require the collapsing of one of the disc classes, which is prohibited by the monotone condition. We now generalise this to higher dimensions. First, we can classify all zero-Maslov self-shrinkers that may arise as a type I blow-up by a result of Groh--Schwarz--Smoczyk--Zehmisch \cite{Groh2007}: 
	\begin{thm}\label{SSzeroMaslov}
		If $F:L^n\to  \bC^n$ is a zero-Maslov Lagrangian self-shrinker arising as a result of a type I blow-up, then $L$ is a minimal Lagrangian cone. 
	\end{thm}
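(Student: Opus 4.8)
The plan is to show that the self-shrinker equation forces the Lagrangian angle $\theta$ to be constant. Since a self-shrinker satisfies $\vec{H} = -\tfrac12 F^\perp$, constancy of $\theta$ yields $\vec{H} = J\nabla\theta = 0$ and hence $F^\perp = 0$; the position field $F = F^\top$ is then tangent everywhere, its flow generates the dilations preserving $L$, and $L$ is a minimal Lagrangian cone with vertex at the origin. So the whole problem reduces to a Liouville-type statement for $\theta$.

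First I would record the governing equation for $\theta$. By Proposition \ref{H = dtheta} a Lagrangian in $\bC^n$ has $H = d\theta$ and $\vec{H} = J\nabla\theta$, so the self-shrinker equation reads $\nabla\theta = \tfrac12 JF^\perp$. The quickest route to the evolution equation is to exploit self-similarity: a self-shrinker generates the eternal solution $\Sigma_t = \sqrt{-t}\,L$ for $t<0$, along which $\partial_t\theta = \Delta\theta$ by (\ref{eqn-ev-theta-vol}), while the scale-invariance of the Lagrangian angle forces the total time-derivative of $\theta$ at a material point to vanish. The self-similar parametrisation $F_t = \sqrt{-t}\,F$ carries an additional tangential velocity whose contribution at $t=-1$ is $-\tfrac12\langle F,\nabla\theta\rangle$, so equating the vanishing total derivative with $\Delta\theta$ plus this term gives the drift-harmonic equation
\[
\mathcal{L}\theta := \Delta\theta - \tfrac12\langle F,\nabla\theta\rangle = 0,
\]
where $\mathcal{L}$ is self-adjoint with respect to the Gaussian weight $e^{-|F|^2/4}\vol_L$.

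Next I would run a weighted energy argument. Multiplying $\mathcal{L}\theta = 0$ by $\theta$, integrating against $e^{-|F|^2/4}\vol_L$ with a cutoff $\eta_R$ (equal to $1$ on $B_R$, supported in $B_{2R}$, with $|\nabla\eta_R|\le C/R$) and integrating by parts gives
\[
\int_L \eta_R^2\,|\nabla\theta|^2\,e^{-|F|^2/4}\vol_L = -2\int_L \eta_R\,\theta\,\langle\nabla\eta_R,\nabla\theta\rangle\,e^{-|F|^2/4}\vol_L.
\]
Here the zero-Maslov hypothesis is essential: it guarantees that $\theta$ is a genuine single-valued function, so this integral is even well-defined. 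To send $R\to\infty$ I would invoke the two structural features of a type I blow-up: Huisken's monotonicity formula provides Euclidean volume growth $\Vol(L\cap B_R)\le CR^n$, so the Gaussian weight is integrable and suppresses the annular error; and the self-shrinker equation gives the gradient bound $|\nabla\theta| = \tfrac12|F^\perp|\le\tfrac12|F|$, so $\theta$ grows at most quadratically and $\theta\,e^{-|F|^2/8}$ is bounded in $L^2_\mu$. The right-hand side then tends to zero, forcing $\nabla\theta\equiv 0$, whence $\theta$ is constant on the connected $L$ and the reduction above concludes the proof.

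The main obstacle is precisely this passage to the limit: justifying that the annular/boundary terms vanish requires the quantitative control that a type I blow-up enjoys (bounded density and Euclidean volume growth from monotonicity, together with the linear gradient bound), rather than being a soft consequence of the self-shrinker equation alone. Both hypotheses of the theorem enter exactly here — zero-Maslov to make $\theta$ well-defined, and type I to supply the growth estimates that close the integration by parts.
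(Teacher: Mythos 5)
Your argument is correct in substance, but it is worth noting that the paper does not actually carry out this proof: its entire proof is the one-line observation that the statement follows from \cite[Theorem 1.9]{Groh2007} together with the fact that type I blow-ups have bounded area ratios. What you have written is, in effect, a self-contained reproof of that cited theorem, and the two key inputs match exactly: the zero-Maslov hypothesis makes $\theta$ a genuine function so the weighted energy identity is meaningful, and the bounded area ratios (Euclidean volume growth from Huisken's monotonicity formula) are precisely what lets the Gaussian-weighted integration by parts close. Your derivation of the drift equation $\Delta\theta - \tfrac{1}{2}\langle F,\nabla\theta\rangle = 0$ via scale-invariance of the Lagrangian angle along the self-similar solution is sound, as is the reduction ``$\theta$ constant $\Rightarrow \vec H = 0 \Rightarrow F^\perp = 0 \Rightarrow$ minimal Lagrangian cone.''

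One technical point deserves a sentence more care than you give it: the gradient bound $|\nabla\theta| \le \tfrac{1}{2}|F|$ controls the growth of $\theta$ in \emph{intrinsic} distance, whereas your cutoff $\eta_R$ and the annular error term live on extrinsic balls $B_{2R}\setminus B_R$, and for a general complete immersion the intrinsic diameter of $L\cap B_{2R}$ need not be comparable to $R$. This is handled in the standard way (properness of the blow-up plus the volume bound, or an intrinsic cutoff, or noting that the total weighted Dirichlet energy $\int_L |\nabla\theta|^2 e^{-|F|^2/4}$ is finite a priori from $|\nabla\theta|\le\tfrac12|F|$ and Euclidean volume growth), but as written the phrase ``$\theta$ grows at most quadratically'' elides the distinction. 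This is a gap in exposition rather than in the mathematics; the argument is the standard one underlying the reference the paper invokes.
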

	This follows directly from \cite[Theorem 1.9]{Groh2007}, noting that type I blow-ups have bounded area ratios.
	
	Since type I blow-ups are smooth, embedded self-shrinkers for type I singularities, this implies there are no zero-Maslov type I blow-ups for type I singularities. Since any type I model is locally symplectomorphic to the standard unit ball, this excludes the possibility of type I singularities for monotone Lagrangians:
	
	\begin{thm}\label{Monotone-typeI}
		Let $F_t:L^n \to M^{2n}$ be a monotone Lagrangian mean curvature flow, $\kappa \neq 0$. Then $F_t$ does not attain any type I singularities.
	\end{thm}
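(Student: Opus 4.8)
The plan is to argue by contradiction: a type I blow-up of a monotone Lagrangian flow must be zero-Maslov, and hence by Theorem~\ref{SSzeroMaslov} a smooth embedded minimal Lagrangian cone. Such a cone is forced to be flat, which contradicts the presence of a singularity.

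First I would suppose that $F_t$ attains a type I singularity at some space-time point $(p,T)$ and perform the usual parabolic (type I) rescaling centred at $(p,T)$ by factors $\lambda_k \to \infty$. Because the type I model is locally symplectomorphic to the standard unit ball, after rescaling the ambient metrics converge to the flat K\"ahler metric on $\bC^n$, and the Einstein term $\kappa H$ appearing in the evolution equation \eqref{H-ev} scales away as a lower-order quantity. Huisken's monotonicity formula then yields, after passing to a subsequence, a smooth embedded Lagrangian self-shrinker $\Sigma \subset \bC^n$ as the blow-up limit; since $F_t$ is genuinely singular at $(p,T)$, the limit $\Sigma$ is non-flat (a flat plane would, by Brakke regularity, force smoothness at $(p,T)$).

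The key step is to show that $\Sigma$ is zero-Maslov. By Corollary~\ref{Exact is monotone} the flow is monotone with constant $\pi/\kappa$, i.e. $\int_D \omega = \tfrac{\pi}{\kappa}\mu(D)$ for every disc $u:(D,\partial D)\to(M,L)$. Under the rescaling the symplectic area scales by $\lambda_k^2$ while the Maslov class, being a homotopy invariant, is unchanged, so the rescaled Lagrangians $L_k$ are monotone with constant $c_k = \lambda_k^2\,\pi/\kappa \to \infty$. Now take any disc $D$ with $\partial D \subset \Sigma$; its symplectic area is finite, and since $L_k \to \Sigma$ smoothly on compact sets (and $\bC^n$ is exact) I may approximate $\partial D$ by boundaries of discs $D_k$ on $L_k$ in the same relative homotopy class, with $\int_{D_k}\omega_k$ converging to the finite area of $D$. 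Monotonicity gives $\int_{D_k}\omega_k = c_k\,\mu(D)$, and since the left-hand side stays bounded while $c_k \to \infty$, we must have $\mu(D)=0$. As $D$ was arbitrary, $\Sigma$ is zero-Maslov.

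Finally, Theorem~\ref{SSzeroMaslov} forces $\Sigma$ to be a minimal Lagrangian cone. But a cone that is smooth and embedded at its vertex is necessarily a linear Lagrangian subspace, hence flat with $|A|^2\equiv 0$; this contradicts the non-flatness of the blow-up, and so no type I singularity can occur. I expect the main technical obstacle to be the careful justification of the blow-up step -- in particular, verifying that the correction $\kappa H$ is genuinely negligible under type I rescaling so that the limit is a bona fide self-shrinker in flat $\bC^n$, and checking that the finite-area discs on $\Sigma$ near the vertex are realised, via the local symplectomorphism to the ball, by ambient discs to which the monotonicity relation \eqref{eq-mono} applies.
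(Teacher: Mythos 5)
Your proposal is correct and follows essentially the same route as the paper: both rest on Theorem~\ref{SSzeroMaslov} together with the observation that parabolic rescaling scales symplectic area by $\lambda^2$ while leaving the Maslov class fixed, which is incompatible with monotonicity unless the relevant Maslov classes vanish. The only difference is the direction of the contradiction—you show the blow-up is zero-Maslov and hence a flat plane, whereas the paper notes the non-planar blow-up must carry a positive-Maslov disc and then contradicts monotonicity by letting its area shrink to zero—but these are logically equivalent rearrangements of the same argument.
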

	
	\begin{proof}
		Suppose for a contradiction that $F_t$ attains a type I singularity at time $T$. Any sequence $\eta_i \to \infty$ subsequentially defines a type I blow-up
		\[\tilde F_s := \lim_{i\to \infty} \tilde F^{\eta_i}_s = \lim_{i\to \infty} \eta_i F_{T+\eta_i^{-2}s}.\]
		Since the singularity is type I, $\tilde F(L):= \tilde F_{-1}(L)$ is a non-planar embedded Lagrangian self-shrinker, and hence by Theorem \ref{SSzeroMaslov} has non-zero Maslov class.
		
		Let $\tilde D \in H_2(\bC^n,\tilde F(L))$ have $\mu(\tilde D) > 0$. The convergence of $\tilde F_s$ to a type I blow-up is smooth and the Maslov class is topological, so for all sufficiently large $i$, there exists $\tilde D_i \in H_2(\bC^n, \tilde F_{-1}^{\eta_i})$ with $\mu(\tilde D_i) = \mu(\tilde D) > 0$ and $\tilde D_i \to \tilde D$ as $i \to \infty$. Furthermore, $\tilde D^i$ are the images under the parabolic rescaling of discs $D_i = \eta_i^{-1} \tilde D_i \in \pi_2(W,F_{T-\eta_i^{-2}}(L))$. Since $L$ is monotone and the Maslov class is invariant under rescaling,
		\[ \int_{D_i} \omega =\frac{\pi}{\kappa} \mu(\tilde D_i) = \frac{\pi}{\kappa} \mu(\tilde D) > 0\]
		for all $i$, but 
		\[\lim_{i\to \infty} \int_{D_i} \omega = \lim_{i \to \infty} \int_{\eta_i^{-1} \tilde D_i} \omega = 0,\]
		a contradiction.
	\end{proof}
		
	This theorem is the positive curvature equivalent of the result of Wang \cite{Wang2001} showing that almost-calibrated Lagrangians do not attain type I singularities in Calabi--Yau manifolds. This strengthens the perspective that monotone submanifolds are the correct class of submanifolds to study to find positive curvature analogues of the Thomas--Yau conjecture. The rest of the paper will be devoted to exploring what a Thomas--Yau conjecture looks like in the prototypical Fano surface $\bCP^2$.

	\section{Equivariant Lagrangians in \texorpdfstring{$\bCP^2$}{CP2}}	
	\subsection{Clifford and Chekanov tori in Lefschetz fibrations}
	
	Our goal is to study the behaviour of Clifford and Chekanov tori in $\bCP^2$ under mean curvature flow, but this presents a number of difficulties. The main problem is the class of potential singularities is too great. Heuristically, singular behaviour is local and since $\bCP^2$ looks flat on sufficiently small scales, we expect that a priori any singular behaviour observed for zero-Maslov Lagrangians in $\bC^2$ should also occur for monotone Lagrangians in $\bCP^2$. In particular, zero-object singularities\footnote{As a note on the terminology: The obvious surgery at such a singularity bubbles off an immersed Lagrangian sphere with a single transverse self-intersection. Since such a sphere represents a zero object in the Fukaya category, it seems sensible to call these singularities which are collapsing zero-homotopic curves zero-object singularities. See Joyce \cite[Section 3.7]{joyce_2015} for a more detailed description.} like those studied in Neves \cite[Figure 3]{Neves2007} (Figure \ref{fig-zero-obj})  can occur and currently we have little understanding about the nature of these singularities. A second issue is that there is no control over where the singularity happens and what Lagrangian cone the type I blow-up produces, even under the assumption that we obtain Lawlor neck singularities. Since these are general problems in Lagrangian mean curvature flow, we choose a symmetric subclass of Lagrangians in $\bCP^2$ which cannot have the zero-object singularities and where we have strong control over the location and type of the singularities.
	
	\begin{figure}
		\centering
		\includegraphics[scale=1]{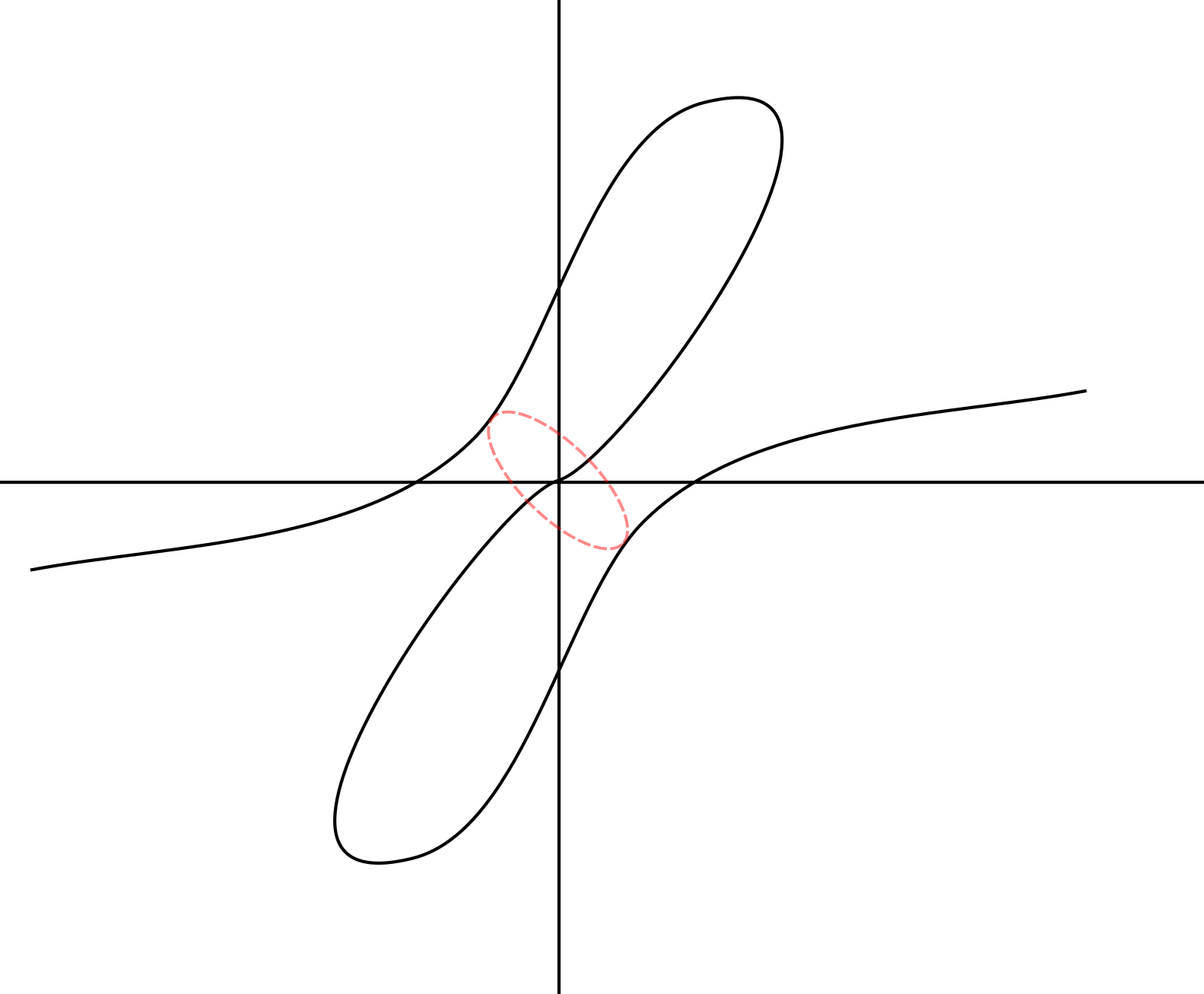}
		\caption{An example of a zero-object singularity - A Lagrangian plane attaining a type II singularity.}\label{fig-zero-obj}
	\end{figure}
	
	We consider two rational maps $\bCP^2 \to \bCP^1$. The first is the Lefschetz fibration 
	\[	f([x:y:z] ) =  [xy:z^2] \]
	in the complement of the anti-canonical divisor $D = \{(xy-z^2)z =0\}$. The second is the projection
	\[ \pi( [x:y:z] ) =  [y:z]. \]
	This extends to a foliation of $\bCP^2$ by holomorphic spheres each intersecting at a single point $[0:0:1]$ with intersection number 1.  
	
	We call a subset $U \subset \bC$ point-symmetric if  $x \in U$ if and only if $-x \in U$. For a point-symmetric curve $\gamma(s) \in \bC$, define
	\[L_\gamma^0 = \left\{ \left[ \gamma(s) e^{i\alpha}: \gamma(s) e^{-i\alpha} :1 \right] \, \big| \; \alpha \in \bR, s \in \bR \right\}\]
	and notice that since $\gamma$ is point-symmetric, $f(L_\gamma^0) = \{[\gamma(s)^2:1]:s \in \bR\}$ is an embedded curve in $\bCP^1$ if $\gamma(s)$ is embedded in $\bC$. We will also allow unions of two smooth non-intersecting curves such that the union is point-symmetric. By an abuse of notation, we refer to such a curve as $\gamma(s)$ where the parameter $s$ is now allowed to vary over two intervals or circles. 
	
	First, we identify various Lagrangians in this format. Let $\gamma(s) = s \in \bR \subset \bC$. Then 
	\[\{ [s e^{i\alpha}: se^{-i\alpha}:1] \} \]
	lies above $\gamma$ and is compactified by the circle $[e^{i\alpha}:e^{-i\alpha}:0]$ at infinity. The resulting manifold is
	\[L_{\bR}^0 :=  \{ [s e^{i\alpha}: se^{-i\alpha}:1] \} \cup \{[e^{i\alpha}:e^{-i\alpha}:0]\} =  \{[1:e^{-2i\alpha}: re^{-i\alpha}]\} \cup \{[0:0:1]\}\]
	or equivalently, using the substitution $s = \cot \phi$,
	\[L_{\bR}^0 = \{ [\cos \phi e^{i\alpha} : \cos \phi e^{-i \alpha} : \sin \phi] : \phi \in [0,\pi/2], \alpha \in \bR \}.  \]
	Note that $L_{\bR}^0$ is fixed under the anti-symplectic involution $X:[x:y:z] \mapsto [\bar y : \bar x : \bar z]$, hence is isomorphic to $\bRP^2$. The same applies for any other line through the origin in $\bC$. 
	
	The curve $\gamma_r(s) = re^{is}$ lifts to a Lagrangian torus of Clifford-type, which is monotone and minimal if and only if $r=1$. Furthermore, any point-symmetric closed curve enclosing the origin lifts to a torus of Clifford-type, monotone if and only if the symplectic area contained is equal to $4\pi/6 = 2\pi/3.$ This follows from the Cieliebak--Goldstein formula (\ref{CG formula}), $\kappa = 6$ and the fact that the disc is Maslov 4. Any closed circle $\gamma$ not enclosing the origin and its point-symmetric image $-\gamma$ together lift to a torus of Chekanov-type (provided $\gamma$ does not intersect $-\gamma$), monotone if and only if the area contained is $2\pi/6 = \pi/3$. The fact that these Lagrangians are Clifford and Chekanov respectively can be checked by observing their images under the Lefschetz fibration $f$ and comparing with the standard definitions in Auroux, for instance, \cite{Auroux_2007}.
	
	We will distinguish between Clifford tori and Chekanov tori by their intersections with real projective planes $\bRP^2$. Immediately we observe that any closed curve $\gamma$ enclosing the origin intersects any line $l$ through the origin in at least two points, hence any equivariant Clifford torus $L_\gamma \cong L_{\text{Cl}}$ intersects $L_l \cong \bRP^2$ in at least one circle. Indeed, this result is generalisable: $L_\text{Cl}$ is non-displaceable from $\bRP^2$, as can be shown in multiple different ways (see for instance \cite{biran_cornea_2009} or \cite{entov_polterovich_2009}). Indeed, Amorim and Alston \cite{alston_amorim_2011} give a lower bound of 2 for the number of intersections between a Clifford torus and $\bRP^2$. On the other hand, one can easily observe that there exists a pair of point-symmetric circles $\gamma(s) \in \bC$ each containing a disc of area 2 and not intersecting the imaginary axis $i\bR \in \bC$. Hence Chekanov tori are displaceable from $\bRP^2$.
	
	In the sequel, it will be useful to consider cones of real projective planes intersecting our flowing Lagrangian tori, so we make the following definition:
	\begin{defn}
		Denote by $l_b$ the line $\{se^{ib}:s \in \bR\} \subset \bC$. For $a \in (0,\pi)$, let $C^b_a$ be a cone of opening angle $a$ about $l_b$, i.e.  the union of $l_{b-a/2}$ and $l_{b +a/2}$. We say that a point-symmetric pair of closed curves $\gamma$ is contained in $C^b_a$ if $\arg(\gamma(s)) \in (b-a/2,b +a/2) \cup (-b-a/2 , -b +a/2)$ for all $s$. 
	\end{defn}
	Finally, we define the symmetry condition we will be using. 
	\begin{defn}
		A Lagrangian $L_\gamma$ is called equivariant if $\gamma$ is point-symmetric and $\bZ_2$-symmetric with respect to the real axis.
	\end{defn}
	The point-symmetry is an $S^1$-symmetry on the level of $L_\gamma$, so the equivariance considered is an $(S^1\times \bZ_2)$-symmetry. The main reason for this symmetry condition is to greatly restrict the variety of singularities that can occur. Specifically, we want to have only Lawlor neck singularities occurring at the origin, with type I blow-up given by $C^0_{\pi/2}$. We shall see how the equivariance gives this in Section \ref{sec-sing}.
	
	\subsection{Mean curvature of equivariant Lagrangians}\label{sec-equiv} 
	
	Before proceeding to the proofs of the main theorems, we calculate the evolution equation satisfied by the profile curve $\gamma$ under mean curvature flow. Despite being the governing equation for the rest of the results in the paper, we do not need the precise formulation frequently: it is only necessary for the explicit construction of various barriers. However, the derivation of the evolution equation for $\gamma$ is interesting in its own right since we calculate the mean curvature of $L_\gamma$ by a novel method.
	
	Recall the fibration $\{L_\alpha\}$ by Clifford-type tori given by the fibres of the moment map 
	\[\mu([x:y:z]) = \frac{1}{|x|^2 +|y|^2 +|z|^2}\left( |x|^2, |y|^2\right).\]
	The equivariant fibres are
	\[L_r = \{L_{re^{i\phi}}:r >0\}\]
	and for the rest of this paper, we denote by $\Omega$ the holomorphic volume form relative to $\{L_\alpha\}$. We first calculate the mean curvature of $L_r$, then we calculate the mean curvature of any other equivariant torus $L_\gamma$ by calculating the relative Lagrangian angle between $L_\gamma$ and $L_r$ using Theorem \ref{thm-intro-H=dtheta}. Recall that Theorem \ref{thm-intro-H=dtheta} implies that the relative Lagrangian angle $\theta = \theta_{\operatorname{rel}}$ defined by $\Omega$ satisfies
	\[H_{L_\gamma}(X) = d\theta_{\operatorname{rel}}(X) + H_{L_r}(\pi X)\]
	where $\pi$ is the projection onto the tangent bundle of $L_r$. 
	
	We calculate the mean curvature 1-form of Clifford tori $L_r$ indirectly. The curve $\gamma(s) =re^{is}$ bounds a $J$-holomorphic disc which lifts to $\bCP^2$ giving a disc 
	\[u:z\mapsto [r z : r z :1]\]
	with boundary on $L_r$ of Maslov index 4. Cieliebak--Goldstein gives 
	\[-\int_{\partial D} H_{L_r} = 6 \int_D \omega - 4\pi\]
	since $\kappa = 6$ for $\bCP^2$ with the Fubini--Study metric. We calculate $\int_D \omega$ directly. We have that in radial coordinates $x= r_1 e^{i\theta_1}$, $y= r_2 e^{i\theta_2}$, the K\"ahler form is
	\begin{align*}\omega = \frac{1}{\left(1+r_1^2+r_2^2\right)^2}
	\Big(&r_1(1+r_2^2) dr_1 \wedge d\theta_1 - r_1 r_2^2 dr_1 \wedge d\theta_2\\ &- r_1^2 r_2 dr_2 \wedge d\theta_1 + r_2(1+r_1^2)dr_2 \wedge d\theta_2\Big),\end{align*}
	so 
	\begin{equation}\label{eqn-area-disc}\int_D \omega = 2\pi \int_0^r \frac{2\tilde r}{(1+2\tilde  r^2)^2} \, d\tilde r = \pi\frac{ 2r^2}{1+2r^2}.\end{equation}
	Hence using Cieliebak--Goldstein, we have
	\[-\int_{\partial D} H_{L_r} = 6\pi\frac{ 2r^2}{1+2r^2}  - 4\pi = 4\pi \left(\frac{r^2-1}{1+2r^2} \right). \]
	Note that $r=1$ is the monotone flat Clifford torus. Then by the symmetry of the tori $L_r$, we have that 
	\begin{equation}\label{eqn-HLr} H_{L_r} = -2\left(\frac{r^2-1}{1+2r^2} \right) ds\end{equation}
	as a 1-form on $L_r$.
	
	Next we calculate the relative Lagrangian angle. If $\gamma(s) = r(s) e^{i\phi(s)}$, then $L_\gamma$ is given by the embedding
	\[F_\gamma:(s,\alpha) \to \left[r(s) e^{i\phi(s)} e^{i\alpha} : r(s) e^{i\phi(s)} e^{-i\alpha}:1 \right].\]
	Identifying the tangent space of $\bCP^2$ in the coordinate patch where $z=1$ with $\bC^2$ in the obvious way, we find that 
	\[\omega\left(\frac{\partial F_\gamma}{\partial s},\frac{\partial F_\gamma}{\partial \alpha}\right) = \omega\left(\partial_{r_1} +\partial_{r_2},\partial_{\theta_1}-\partial_{\theta_2}\right) = 0,\]
	which verifies that $L_\gamma$ is Lagrangian, and furthermore, we have
	\[\Omega_{L_r} \left(\frac{\partial F_\gamma}{\partial s},\frac{\partial F_\gamma}{\partial \alpha}\right) = \Omega_{L_r}\left(-r^{-1}r' J\partial_\phi + \phi' \partial_\phi, \frac{\partial F_\gamma}{\partial \alpha}\right),\]
	where $\partial_\phi = \partial_{\theta_1} + \partial_{\theta_2}$ and we have used $J\partial_{\theta_i} = -r_i \partial_{r_i}$. Since $\frac{\partial F_\gamma}{\partial \alpha}$ is tangent to $L_r$, the Lagrangian angle $\theta$ relative to $\Omega_{L_r}$ is given by
	\[\theta = \arg\left( \phi' - ir'r^{-1}\right) = -\tan^{-1} \left( \frac{r'}{r\phi'}\right)\]
	and hence 
	\begin{equation}\label{eqn-dtheta} d\theta = \frac{-r'' r \phi' + r'^2 \phi' +r' r \phi''}{r'^2 +r^2 \phi'^2} ds.\end{equation}
	But the Euclidean planar curvature $k$ of $\gamma$ is 
	\begin{equation}\label{eqn-k} 
	\begin{split} k &= \frac{-r''r\phi' + 2r'^2 \phi' + r'r \phi'' +r^2 \phi'^3}{\left(r'^2 +r^2 \phi'^2\right)^{3/2} }\\
	&=\left( \frac{-r''r\phi' + r'^2 \phi' + r'r \phi''}{\left(r'^2 +r^2 \phi'^2\right)} + \phi' \right) \frac{1}{\sqrt{r'^2 +r^2 \phi'^2}} \end{split}
	\end{equation}
	We have that the projection of $\frac{\partial F_\gamma}{\partial s}$ onto $L_r$ is 
	\[\pi\left(\frac{\partial F_\gamma}{\partial s}\right) = \frac{\omega\left(\frac{\partial F_\gamma}{\partial s},J \frac{\partial F_r}{\partial s}\right)}{\omega\left(\frac{\partial F_r}{\partial s},J\frac{\partial F_r}{\partial s}\right)} \frac{\partial F_r}{\partial s} = \phi'\frac{\partial F_r}{\partial s} \]
	so we are led to conclude that	
	\begin{equation}\label{eqn-alpha}H_{L_r}\left(\pi\left(\frac{\partial F_\gamma}{\partial s}\right)\right) = -2\left(\frac{r^2-1}{1+2r^2} \right) \phi'.\end{equation}
	Combining the above equations, we obtain
	\begin{align*} H_{L_\gamma} = d\theta + H_{L_r}(\pi(\cdot)) &= \left( k \sqrt{r'^2 +r^2 \phi'^2} - \phi' - 2\left(\frac{r^2-1}{1+2r^2}\right)\phi'\right) \, ds  \\
	&= \left( k \sqrt{r'^2 +r^2 \phi'^2} + \left(\frac{1-4r^2}{1+2r^2}\right)\phi'\right) \, ds.
	\end{align*}
	Hence we have that 
	\[\omega\left( \frac{\partial F_\gamma}{\partial s}, \vec H_{L_\gamma}\right) = k \sqrt{r'^2 +r^2 \phi'^2} +\left(\frac{1-4r^2}{1+2r^2}\right)\phi',\]
	but 
	\begin{align*}
	\omega\left(\frac{\partial F_\gamma}{\partial s}, J \frac{\partial F_\gamma}{\partial s}\right) &= \omega\left( r'(\partial_{r_1} +\partial_{r_2}) + \phi' (\partial_{\theta_1} +\partial_{\theta_2}), r'r^{-1}(\partial_{\theta_1} +\partial_{\theta_2}) - r\phi'(\partial_{r_1} +\partial_{r_2})\right)\\
	&= \left(r'^2 r^{-1} + r\phi'^2\right) \omega\left(\partial_{r_1} +\partial_{r_2}, \partial_{\theta_1} +\partial_{\theta_2}\right) \\
	&= 2\frac{r'^2 +r^2 \phi'^2}{(1+2r^2)^2}.
	\end{align*}
	So we conclude that
	\[\vec H_{L_\gamma} = \frac{1}{2}\left(1+2r^2\right)^2\left( k + \left(\frac{1-4r^2}{1+2r^2} \right) \frac{\phi'}{\sqrt{r'^2+r^2\phi'^2}}\right) DF_\gamma(\nu)\]
	where $\nu$ is the Euclidean normal to $\gamma$ in $\bC$. Since $\langle \gamma, \nu\rangle = -r^2 \phi' /|\gamma'|$, we have that the mean curvature flow of $F_\gamma$ in $\bCP^2$ induces an equivariant flow on $\gamma$ given by 
	\begin{equation}\label{eqn-equiv-mcf}\frac{\partial \gamma}{\partial t} = \frac{1}{2}\left(1+2r^2\right)^2\left( k - \left(\frac{1-4r^2}{1+2r^2} \right) \frac{\langle\gamma,\nu\rangle}{r^2}\right) \nu \end{equation}

	\subsection{Triangle calculations using Cieliebak--Goldstein}\label{sec-triangle}
	
	In order to prove the main results of this paper, we apply the generalised Cieliebak--Goldstein theorem (Theorem \ref{thm-CG-gen}) to certain $J$-holomorphic polygons with boundary on flowing Lagrangians. The most important are triangles with one vertex at the origin. Since these triangle calculations are ubiquitous and essential in the sequel, we review the methods involved here.
	
	\begin{ex}\label{ex-triangle}
		Let $L_\gamma$ be an equivariant Lagrangian in $\bCP^2$ intersecting the cone $C^0_{\psi}$ at points $p^+, p^-$, see Figure \ref{fig-triangle}, with Euclidean turning angle $\xi$ at $p^+,p^-$. Consider the $J$-holomorphic triangle $P$ with boundary on $L_\gamma$ given by the horizontal lift of the Euclidean triangle (also denoted $P$) with boundary on $\gamma$, $C^0_{\psi}$ and vertices at $0,p^+$ and $p^-$.
		
		\begin{figure}
			\centering
			\includegraphics[scale=1.5]{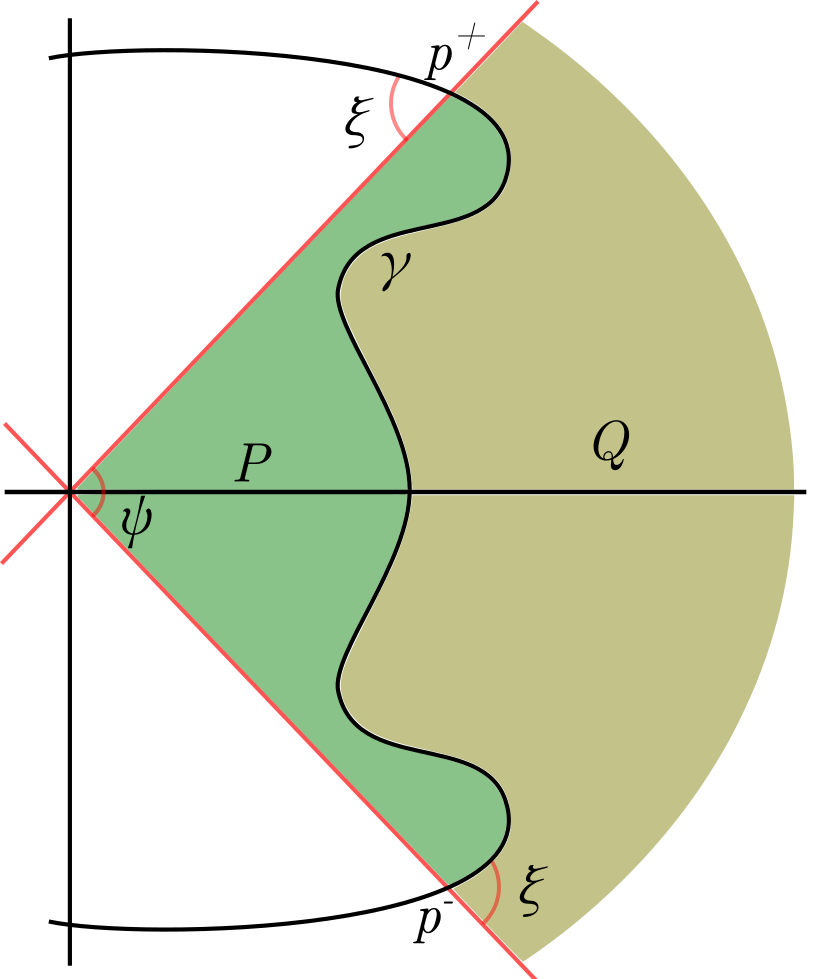}
			\caption{The triangles $P$ and $Q$ considered in Example \ref{ex-triangle}}\label{fig-triangle}
		\end{figure}
		
		We first calculate $\tilde \mu(P)$. The Maslov number can be broken down into two components: a component from the topology of the triangle, and a component from the angles at the vertices. By smoothing the corners of the triangle so that the resulting triangle does not intersect the origin, we see that the topological component is 2. The contribution from the turning angles at the either of the corners $p^\pm$ is given by $(- \theta_1(p^\pm) + \theta_2(p^\pm))/\pi$, which is equivalent to the difference in Euclidean Lagrangian angle between the Lagrangian planes $T_{p_1} L_1$ and $T_{p_1} L_2$. So we have 
		\[\tilde \mu(P) = 2 - \frac{2}{\pi} \xi - A(\psi),\]
		where $A(\psi)$ is some function of the opening angle at the origin to be determined. 
		
		We could calculate this directly by calculating the difference in Lagrangian angle between $l_{\psi/2}$ and $l_{-\psi/2}$. For the purposes of intuition however, we calculate indirectly using the example where $\gamma(s) = e^{is}$ is the minimal Clifford torus. We have that $\xi = \pi/2$, so 
		\[\tilde \mu(P) = 1 - A(\psi).\]
		Furthermore, the area of $P$ is given by 
		\[\int_P \omega = \frac{\psi}{2\pi} \frac{4\pi}{6} = \frac{\psi}{3}\]
		since the area is $4\pi/6$ when $\psi = 2\pi$. Since $H_{L_\gamma} = 0$,  (\ref{eqn-CG-gen}) implies that 
		\[A(\psi) = 1 - \frac{\kappa}{\pi} \int_P \omega = \frac{1}{\pi}\left( \pi -2\psi\right).\]
		Since the contribution of $\psi$ at the origin is independent of the choice of $\gamma$, we have that
		\begin{equation}\label{eqn-mu-triangle-origin}
		\tilde \mu(P) = 2 - \frac{2}{\pi} \xi - \frac{1}{\pi}\left( \pi -2\psi\right).
		\end{equation}
		In the important special case where $\xi = \pi$, i.e. $\gamma$ is tangent to the cone $C^0_\psi$ at the points $p^+$ and $p^-$, the sign of $\tilde \mu(P)$ is controlled by the opening angle $\psi$. We have that 
		\[\tilde \mu(P) = -\frac{1}{\pi}\left( \pi -2\psi\right)\]
		and hence $\tilde \mu(P)$ is negative for $\psi <\pi/2$ and positive for $\psi > \pi/2$.
	\end{ex}
	
	\subsubsection{Evolution equations for polygons}
	
	Since they are important in the sequel, we recall the key formulae concerning $H$ and $\theta$. By Theorem \ref{thm-intro-H=dtheta}, we have that
	\[H = d\theta + \alpha,\]
	where $\alpha$ is the 1-form $H_{L_r} (\pi (\cdot))$, where $\pi$ is projection to the tangent bundle of $L_r$. Furthermore, $\theta$ defined in this way satisfies the evolution equation 
	\[\frac{\partial}{\partial t} \theta = \Delta \theta +d^\dagger \alpha,\]
	by the same calculation that yielded (\ref{eqn-ev-theta-vol}), and the mean curvature 1-form $H$ satisfies
	\[\frac{\partial}{\partial t} H = dd^\dagger H +\kappa H.\]
	
	Recall that for a polygon $P$ with no corners, the Maslov number is the Maslov class and is invariant under mean curvature flow, and so we have 
	\[\frac{\partial}{\partial t} \int_P \omega = -\frac{1}{\kappa} \frac{\partial}{\partial t} \int_{\partial P} H = - \frac{1}{\kappa} \int_{\partial P} dd^\dagger H + \kappa H = \kappa \int_P \omega - \pi \mu(P).\]
	It initially seems reasonable to conjecture then that for a polygon $P$ with corners,
	\[\frac{\partial}{\partial t} \int_P \omega = \kappa \int_P \omega - \pi \tilde \mu(P).\]
	However, this does not hold for two reasons. Firstly, we obtain boundary terms from integrating $dd^\dagger H$. Secondly, when differentiating, we must account for potential tangential motion of the vertices of the polygon under mean curvature flow.
	
	For these reasons, we only consider the evolution equations in the context of Example \ref{ex-triangle}. We note that in this case we have that the sides of the triangle on the cone are constant angle and minimal.
	
	To that end, let $L_\gamma$ be a flowing equivariant Lagrangian, intersecting the cone $C^0_{\psi}$ at points $p^\pm$, forming a triangle $P$ as in Example \ref{ex-triangle}. Initially, we assume the intersections are transverse. Writing $\theta$ for the relative Lagrangian angle of $L_\gamma$ and $H = H_{L_{\gamma}}$ for the mean curvature 1-form, by differentiating (\ref{eqn-CG-gen}) we obtain
	\begin{align*}\frac{\partial}{\partial t} \int_P  \omega &= \frac{\partial}{\partial t}\left( \frac{\pi}{\kappa} \tilde \mu(P) - \frac{1}{\kappa} \int_{\gamma}  H\right) \\
	&= \frac{1}{\kappa}\frac{\partial}{\partial t} \left(\theta(p^-) - \theta(p^+)  \right) -\frac{1}{\kappa} \frac{\partial}{\partial t}  \int_{\gamma}  H \end{align*}
	From each term, we obtain a normal and tangential term to account for the tangential movement of the intersection points $p^\pm$ along $C^0_\psi$ under the flow. Writing the mean curvature flow as
	\[\frac{\partial}{\partial t} X = \vec H  + V\]
	for a tangential diffeomorphism $V$ to be determined, we have that 
	\[\frac{\partial}{\partial t} \left(\theta(p^\pm)\right) = \Delta \theta(p^\pm)  + d^\dagger \alpha(p^\pm) +\langle \nabla \theta, V \rangle(p^\pm),\]
	and
	\begin{align*}\frac{\partial}{\partial t} \int_{\gamma} H =& \int_{\gamma} \left( dd^\dagger H + \kappa H\right) + \left\langle \nabla \int_{\gamma} H, V\right\rangle \\
	=& \kappa \int_{\gamma} H - \Delta \theta(p^-) + \Delta \theta(p^+) - d^\dagger \alpha(p^-) + d^\dagger \alpha(p_2) \\ 
	&- \langle \nabla \theta,V\rangle (p^-) + \langle \nabla \theta,V\rangle (p^+) - \alpha(V)(p^-) + \alpha(V)(p^+)
	\end{align*}
	where we have used that $H = d\theta + \alpha$ and hence $d^\dagger H = \Delta \theta + d^\dagger \alpha$, where $\alpha$ is the closed 1-form on $L$ defined by $\alpha(X) = H_{L_r}(\pi X)$. Combining the above equations and applying (\ref{eqn-CG-gen}), we obtain	
	\begin{equation}\label{eqn-ev-P-1}
	\begin{split}
	\frac{\partial}{\partial t} \int_P  \omega =& \kappa \int_P \omega - \pi \tilde \mu(P) \\ &+ \frac{1}{\kappa} \left(-  \alpha(V)(p^-) + \alpha(V)(p^+)\right). \end{split}\end{equation} 	
	Since the intersection is transversal, we can write the tangential vector field $V$ as $\vec H+V = W$, for some vector field $W$ on $L_\gamma$ tangent to $C^0_\psi$. The vector field $W$ then gives the motion of $p^\pm$ along the cone, and we have that  
	\[\alpha(V) = \alpha(W-\vec H) = \alpha(-\vec H).\]
	Note that while $V$ is not well-defined when the intersection is not transversal, $\alpha(-\vec H)$ is well-defined everywhere on $L_\gamma$. Thus it is tempting to claim that 
	\begin{equation}\label{eqn-ev-P-2}
	\begin{split}
	\frac{\partial}{\partial t} \int_P  \omega =& \kappa \int_P \omega - \pi \tilde \mu(P) \\ &+ \frac{1}{\kappa} \left(-  \alpha(-\vec H)(p^-) + \alpha(- \vec H)(p^+)\right). \end{split}\end{equation} 
	even when the intersection is non-transversal. The most important case of this is characterised in the following lemma, where $\psi$ is a local maximum opening angle, allowed to vary in time.
	
	\begin{lem}\label{lem-tangential-triangle}
		Let $L_\gamma$ be an equivariant Lagrangian mean curvature flow in $\bCP^2$ on a time interval $[t_1,t_2]$, with $\gamma$ not passing through the origin. Suppose that for $t \in [t_1,t_2]$, $L_\gamma$ has a local maximum opening angle $\psi(t)$ on $[t_1,t_2]$, where $\psi(t)$ is a smooth function of $t$. Then the triangle $P$ defined by the cone $C^0_\psi$ and $\gamma$, with vertices at $p^\pm$ and the origin, satisfies  
		\begin{equation}\label{eqn-ev}\frac{d}{d t} \int_P \omega 
		\leq \kappa \int_P \omega +(\pi-2\psi) \end{equation}
	\end{lem}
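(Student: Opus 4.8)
The plan is to reduce to the tangential configuration of Example \ref{ex-triangle} and then to control the defect in the ``tempting'' formula (\ref{eqn-ev-P-2}) arising from both the non-transversality and the motion of the cone.

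First I would record the geometry at a local maximum of the opening angle. At such a point the profile curve $\gamma$ is tangent to each of the rays $l_{\pm\psi/2}$, so the Euclidean turning angle is $\xi=\pi$ and, writing $\phi=\arg\gamma$, we have $\phi'=0$ together with the second-order condition $\phi''\le 0$ at the two contact points $p^\pm$. The turning-angle computation (\ref{eqn-mu-triangle-origin}) then gives $\tilde\mu(P)=-\tfrac1\pi(\pi-2\psi)$, so that $-\pi\tilde\mu(P)=\pi-2\psi$ supplies exactly the inhomogeneous term on the right-hand side of (\ref{eqn-ev}).

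Next I would make the evolution rigorous in the non-transversal setting. Since $\omega$ is closed, the transport theorem gives $\frac{d}{dt}\int_P\omega=\int_{\partial P}\iota_{\vec v}\omega$, where $\vec v$ is the velocity of the boundary. The arc of $\gamma$ moves by $\vec H$, its tangential reparametrisation dropping out of $\iota_{\vec v}\omega$, and its contribution is $-\int_\gamma H$, which by the generalised Cieliebak--Goldstein identity (\ref{eqn-CG-gen}) equals $\kappa\int_P\omega+(\pi-2\psi)$; the two cone rays are pieces of minimal $\bRP^2$'s but still contribute because the cone $C^0_{\psi(t)}$ rotates as $\psi$ varies. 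Equivalently, approximating $C^0_\psi$ by transversal cones and passing to the limit, this cone contribution is precisely the boundary correction of (\ref{eqn-ev-P-2}), namely $\tfrac1\kappa\bigl(\alpha(-\vec H)(p^+)-\alpha(-\vec H)(p^-)\bigr)$ with $\alpha(\,\cdot\,)=H_{L_r}(\pi\,\cdot\,)$. Thus the lemma reduces to showing this correction is $\le 0$.

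The crux, and the step I expect to be the main obstacle, is signing this correction. Here I would exploit the $(S^1\times\bZ_2)$-symmetry: the reflection $\sigma$ exchanges $p^+$ and $p^-$, is an isometry so that $\vec H$ is $\sigma$-equivariant, and preserves the fibration $\{L_r\}$ while reversing the orientation of the fibre, whence $\sigma^*H_{L_r}=-H_{L_r}$ by (\ref{eqn-HLr}). Consequently $\alpha(-\vec H)(p^-)=-\alpha(-\vec H)(p^+)$ and the correction collapses to $\tfrac2\kappa\,\alpha(-\vec H)(p^+)$. At $p^+$ the tangency $\phi'=0$ forces $\pi\vec H=\vec H$ (the mean curvature vector, being a multiple of $DF_\gamma(\nu)$ with $\nu$ angular, already lies in $TL_r$), so $\alpha(-\vec H)(p^+)=-H_{L_r}(\vec H)$ is an explicit multiple of the curvature $k$ by (\ref{eqn-HLr}) and (\ref{eqn-equiv-mcf}). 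The remaining task is to verify, using the second-order local-maximum condition $\phi''\le 0$, which sign-couples $k$ to $r'$ through (\ref{eqn-k}), that this multiple is nonpositive. Reconciling these signs --- equivalently, checking that the rotation of the cone can only decrease the enclosed symplectic area to leading order at a local maximum --- is the delicate point, and it is here that both the symmetry and the maximality of $\psi$ are essential.
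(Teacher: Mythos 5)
Your first paragraph and the Cieliebak--Goldstein computation for the arc of $\gamma$ match the paper's proof, and the $\bZ_2$-symmetry identity $\alpha(-\vec H)(p^-)=-\alpha(-\vec H)(p^+)$ is correct. However, the step you yourself flag as ``the delicate point'' is a genuine gap, not a routine sign check, and as set up it does not close. Your reduction requires $\alpha(-\vec H)(p^+)=-H_{L_{r(p^+)}}(\pi\vec H)(p^+)\le 0$ pointwise. By (\ref{eqn-HLr}) this quantity carries the factor $(r^2-1)/(1+2r^2)$, whose sign flips at $r=1$, while at the tangency $\vec H$ is a multiple of the angular direction pointing into the cone (since $\phi'=0$ and $\langle\gamma,\nu\rangle=0$ there), a direction determined by the maximality of $\psi$ alone. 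Hence the sign of your correction term depends on whether $r(p^+)\gtrless 1$, which neither the tangency nor $\phi''\le 0$ controls; for tangencies at radius $r(p^+)<1$ (exactly the situation for the small triangles to which the lemma is applied in Section \ref{sec-2sings}) the pointwise inequality goes the wrong way. A second, related problem is the identification of ``the cone contribution'' in your transport-theorem decomposition with the boundary correction of (\ref{eqn-ev-P-2}): the $\alpha$-terms there arise from the tangential motion of the intersection points in the differentiated Cieliebak--Goldstein identity and are present even for a \emph{fixed} transversal cone, whereas the rotation of $C^0_{\psi(t)}$ contributes a separate term that you have dropped --- and that dropped term is where the negativity actually comes from.

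The paper's proof supplies precisely this missing ingredient. It introduces the two-parameter signed-area family $A(s,t)=\int_{P_s}\omega$ over all cones through points $\gamma(s)$, writes $\frac{d}{dt}A(S(t),t)$ as the transversal evolution plus $\frac{d\chi}{dt}\frac{\partial A}{\partial\chi}+\frac{dS}{dt}\frac{\partial A}{\partial s}$, kills the last term because $S(t)$ is a critical point of $s\mapsto A(s,t)$, and signs the remaining extra term using that the maximal opening angle is non-increasing in time --- a consequence of the Sturmian intersection count with cones in Proposition \ref{Wood-lemmas} --- together with the monotonicity of $A$ in the opening angle. This time-monotonicity of $\psi(t)$ is the one input your proposal never invokes, and it, rather than a pointwise sign of $\alpha(-\vec H)$ at a single vertex, is what makes the inequality close. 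Your transport-theorem framing can be repaired along exactly these lines: the $\gamma$-arc contributes $-\int_\gamma H=\kappa\int_P\omega+(\pi-2\psi)$ as you say, and the two rays contribute $\tfrac12\dot\psi$ times the (nonnegative) derivative of the enclosed area in the opening angle, which is $\le 0$ since $\dot\psi\le 0$.
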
 
	
		\begin{proof}		
		Let $\gamma(s)$ be parametrised by some variable $s$. Then there exists a smooth function $S(t)$ such that $\gamma(S(t))$ attains the maximum opening angle $\psi(t)$.
		
		\begin{figure}
			\centering
			\includegraphics[scale=0.9]{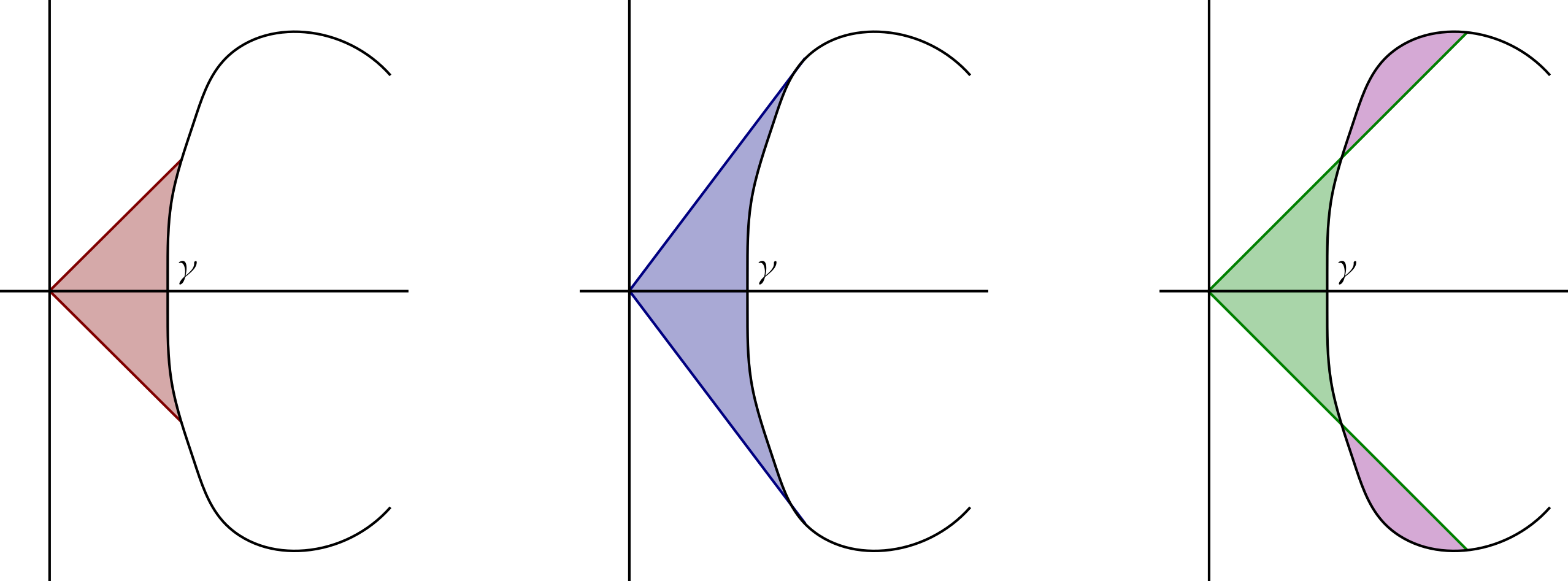}
			\caption{Three triangles $P_s$, ordered with increasing $s$. In the right diagram, the area is counted with sign, i.e. the green area is counted positively and the purple area negatively.}\label{fig-signedarea}
		\end{figure}
		
		Let $A(s,t) = \int_{P_s} \omega$, where $P_s$ is the triangle intersecting $\gamma$ at $\gamma(s)$. Here, the integral is the signed integral of $\omega$, see Figure \ref{fig-signedarea}. Then we have to calculate the time-derivative of $A$ at $S(t)$ for $t \in (t_1,t_2)$. By choosing a sufficiently small time neighbourhood $(t_-,t_+)\subset (t_1,t_2)$ of $t$, we can find a time-independent space neighbourhood $(s_-,s_+)$ of $S(t)$ for all $t$ such that $\gamma(s)$ intersects the cone transversally for all $s\neq S(t)$.
		
		For any fixed opening angle $\chi$ with transversal intersections with $\gamma$ at $p^\pm_\chi$, we have that
		\[\frac{\partial}{\partial t} \int_{P_\chi} \omega = \kappa \int_{P_\chi} \omega - \pi \tilde \mu(P_\chi) + \frac{1}{\kappa}\left( - \alpha(-\vec H)(p_\chi^+) + \alpha(- \vec H)(p_\chi^-)\right),\]
		where $P_\chi$ is the triangle of opening angle $\chi$, again calculated with sign.	Now allowing that the opening angle $\chi = \chi(s,t)$ may evolve with $s$,
		\[\frac{d}{d t} A(s,t) = \frac{\partial}{\partial t} \int_{P_\chi} \omega + \frac{d\chi }{dt} (s,t) \frac{\partial A}{\partial \chi} (s,t) \]
		and taking limits as $s \to S(t)$ gives
		\begin{align*}\frac{d}{d t} A(S(t),t) = & \kappa \int_{P_\chi} \omega - \pi \tilde \mu(P_\chi) + \frac{1}{\kappa}\left( - \alpha(-\vec H)(p_\chi^+) + \alpha(- \vec H)(p_\chi^-)\right)\\ 
		& + \frac{d\chi }{dt} (S(t),t) \frac{\partial A}{\partial \chi} (S(t),t) + \frac{d S}{dt}(t) \frac{\partial A}{\partial s} (S(t),t).
		\end{align*}
		But since $S(t)$ is a local maximum of the area by assumption, we have that
		\[\frac{\partial A}{\partial s} (S(t),t) = 0.\]
		Furthermore, the maximum opening angle is decreasing in time, so 
		\[ \frac{d\chi }{dt} (S(t),t) \leq 0,\]
		and $A$ is always increasing in $\chi$ for $\chi <\psi$, so 
		\[\frac{d\chi }{dt} (S(t),t) \geq 0.\]
		Finally, 
		\[-\alpha(-\vec H)(p^+)+\alpha(-\vec H)(p^-) < 0\]
		since the direction of the mean curvature is fixed by the assumption that $p^\pm$ are at the maximum opening angle. Since the Maslov number satisfies $\pi \tilde \mu(P) = -(\pi- 2\psi)$, we conclude that
		\[\frac{d}{d t} A(S(t),t) \leq  \kappa \int_{P_\chi} \omega + (\pi- 2\psi),\]
		as desired.
	\end{proof}
	
	\subsection{Minimal equivariant Lagrangians}\label{sec-minimal}
	
	The main result of this section is Theorem \ref{thm-main-minimal}, which we slightly expand on now that we have the relevant terminology from Section \ref{sec-equiv}.
	
	\begin{thm}
		There exists a countably infinite family of complete immersed minimal equivariant equivariant Lagrangians. In particular, given any radius $R$ with $0<R<1$, there exists a complete immersed minimal equivariant torus $L_\gamma$ with $0< \min_\gamma r(\gamma)  \leq R$, where $r$ is the Euclidean radius function on $\gamma$. 
	\end{thm}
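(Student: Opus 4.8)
The plan is to reduce the minimal condition $\vec H_{L_\gamma}=0$ to an autonomous planar ODE for the profile curve, produce a first integral, and then run an Abresch--Langer-style rotation-number argument on the resulting family of periodic orbits. First I would extract the minimal equation. Setting $\vec H_{L_\gamma}=0$ in the formula for $\vec H_{L_\gamma}$ preceding (\ref{eqn-equiv-mcf}), the curve $\gamma$ is minimal precisely when $k=-\left(\frac{1-4r^2}{1+2r^2}\right)\frac{\phi'}{\sqrt{r'^2+r^2\phi'^2}}$. Parametrising $\gamma$ by arclength and letting $\beta$ be the angle the unit tangent makes with the radial direction, so that $r'=\cos\beta$, $r\phi'=\sin\beta$ and $k=\phi'+\beta'$, this becomes the autonomous system
\[
r' = \cos\beta, \qquad \beta' = -\frac{2(1-r^2)}{r(1+2r^2)}\,\sin\beta ,
\]
in which the angular variable decouples and is recovered from $\phi'=\sin\beta/r$. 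The key gain is that $\phi$-independence makes this a genuine two-dimensional system in $(r,\beta)$.

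Next I would integrate the system. Dividing the equations and separating variables yields the first integral
\[
E(r,\beta)=\frac{r^2\sin\beta}{(1+2r^2)^{3/2}},
\]
whose level sets are the orbits. The function $g(r)=(1+2r^2)^{3/2}/r^2$ attains its minimum $3\sqrt3$ at $r=1$, and the unique critical point of the system is $(r,\beta)=(1,\tfrac{\pi}{2})$ — exactly the minimal Clifford torus, with $E=1/(3\sqrt3)$. For each $E\in(0,1/(3\sqrt3))$ the level set is a closed orbit encircling this centre, along which $r$ oscillates between the two roots $r_{\min}(E)<r_{\max}(E)$ of $g(r)=1/E$. As $E\to 1/(3\sqrt3)$ the orbit collapses to the centre ($r_{\min},r_{\max}\to 1$), while $r_{\min}(E)\to 0$ as $E\to 0$. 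This already gives the radius claim: for any $R<1$, choose $E$ small enough that $r_{\min}(E)\le R$.

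The real work lies in the angular increment $\Delta\phi(E)=\oint\phi'\,ds=2\int_{r_{\min}}^{r_{\max}}\frac{\sin\beta}{r\,\lvert\cos\beta\rvert}\,dr$, where $\sin\beta=E\,g(r)$, accumulated over one radial period. A closed orbit of the $(r,\beta)$-system produces a closed \emph{profile} curve only when $\gamma$ also closes angularly, i.e. when the rotation number $\Delta\phi(E)/2\pi$ is rational $p/q$: after $q$ periods the curve winds $p$ times about the origin and closes, and placing a turning point (where $\beta=\pm\tfrac{\pi}{2}$) on the real axis arranges the required $\bZ_2$- and point-symmetry, so $\gamma$ lifts to an immersed equivariant minimal torus. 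Linearising at the centre $(1,\tfrac{\pi}{2})$, with $u=r-1$, gives $u'=-v$, $v'=\tfrac{4}{3}u$, hence radial frequency $2/\sqrt3$ and $\phi'\to 1$, so $\Delta\phi(E)\to\pi\sqrt3$, i.e. rotation number $\to\sqrt3/2$, as $E\to 1/(3\sqrt3)$.

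The main obstacle is to show that $\Delta\phi(E)$ is continuous and \emph{genuinely non-constant} as $E$ ranges over $(0,1/(3\sqrt3))$ — equivalently that the period function is not identically $\pi\sqrt3$, which is essential since the limiting value $\sqrt3/2$ is irrational and yields no closed curve on its own. I would establish this by analysing the limit of the period integral as $E\to 0$ and, ideally, proving monotonicity of $\Delta\phi$. Once $\Delta\phi(E)/2\pi$ is known to sweep out a nondegenerate interval, it assumes infinitely many rational values $p/q$, and since distinct rotation numbers give non-congruent curves this yields the desired countably infinite family; choosing these rationals among small values of $E$ simultaneously keeps $r_{\min}$ below any prescribed $R<1$.
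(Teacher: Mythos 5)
Your reduction is sound and is in fact the same proof in different clothes: parametrising by arclength and introducing the angle $\beta$ between the tangent and the radial direction gives a first integral $E(r,\beta)=r^2\sin\beta\,(1+2r^2)^{-3/2}$ which is exactly the paper's first integral (\ref{eqn-BfC}) with $E=C^{-1/2}$, the critical level $E=1/(3\sqrt3)$ corresponding to $C=27$ and the Clifford torus. The claim that $r_{\min}(E)\to 0$ as $E\to 0$, the rotation-number criterion for closing up, and the linearisation at the centre giving $\Delta\phi\to\pi\sqrt3$ as $E\to 1/(3\sqrt3)$ are all correct, and the last of these is a clean computation the paper does not state explicitly.

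However, there is a genuine gap, and you have located it yourself: everything hinges on showing that the rotation number $\Delta\phi(E)/2\pi$ is \emph{non-constant}, so that it sweeps out a nondegenerate interval and hence attains infinitely many rational values. Your centre-limit $\sqrt3/2$ is irrational, so if the period function happened to be constant there would be \emph{no} closed non-round solutions at all; the linearisation alone therefore proves nothing about existence. You defer this step to ``analysing the limit of the period integral as $E\to 0$ and, ideally, proving monotonicity,'' but that analysis is precisely the technical heart of the paper's proof: Lemma \ref{lem-period} shows $\psi_C\to 3\pi/2$ as $C\to\infty$ (equivalently $E\to 0$) by splitting the period into inner and outer parts and trapping a $J$-holomorphic bigon between comparison polygons via the Cieliebak--Goldstein formula (\ref{eqn-geom-ineqs}), and this is supplemented by the explicit estimate $\psi_{54}>3\pi/2$ to guarantee a nondegenerate interval of attained periods. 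The period integral has a singular integrand at both turning points and degenerating limits $r_1\to 0$, $r_2\to\infty$, so its limit cannot be read off by inspection; until you supply an argument of this kind (or a direct proof of monotonicity of the period, which is itself a delicate question for centres of planar systems), the existence of even a single non-round closed solution is unproven. A secondary point to tidy up: closing the curve with rotation number $p/q$ in lowest terms gives invariance under rotation by $2\pi/q$, so point-symmetry (rotation by $\pi$) requires $q$ even; you should check that infinitely many of your rational values can be taken with $q$ even, which is easy once the interval of rotation numbers is nondegenerate but should be said.
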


	Since the method of proof is rather long and calculational, we provide an abridged version, referring the reader to the author's doctoral thesis \cite{Evans_2022}[Section 4.6] for full details. 
	
	From equation (\ref{eqn-equiv-mcf}), any minimal equivariant Lagrangian must satisfy
	\begin{equation}\label{eqn-min-equiv}
	k - \left(\frac{1-4r^2}{1+2r^2} \right) \frac{\langle\gamma,\nu\rangle}{r^2} = 0.
	\end{equation}
	Away from the origin, equation (\ref{eqn-min-equiv}) is a non-linear 2nd order ODE. Given any point $x \in \bC$ and an initial velocity $v \in T_x \bC$, there is a unique local solution to (\ref{eqn-min-equiv}) passing through $x$ with velocity $v$. The proof is identical to the equivalent statement for existence and uniqueness of geodesics.
	
	Two classes of solutions to (\ref{eqn-min-equiv}) are immediately apparent. First, either from the derivation of (\ref{eqn-min-equiv}) or by direct calculation, one can see that the Clifford torus  $L_1 := L_{e^{is}}$ given by the unit circle is a minimal submanifold. Second, any straight line through the origin $l_b = \{ se^{ib} : s\in\bR\}$ has $k=0$ and $\langle l_b,\nu\rangle=0$, and hence gives a minimal submanifold of $\bCP^2$, topologically a real projective plane. Furthermore, the existence and uniqueness implies that if a solution $\gamma$ at any point has $\langle\gamma,\nu\rangle = 0$, then it is a line $l_b$ everywhere.
	
	We now restrict attention to point-symmetric solutions that are graphs over sections of the unit circle, i.e. $\gamma(s) = r(s)e^{is}$ with $r(s) \in (0, \infty)$.
	
	From (\ref{eqn-min-equiv}), we have that $r$ satisfies
	\[-r''r +2r'^2 +r^2 + \left(\frac{1-4r^2}{1+2r^2}\right)(r'^2 +r^2) = 0.\]
	Rearranging, we obtain
	\begin{equation}\label{eqn-minimal-graph}
	-r''r +\left(\frac{3}{1+2r^2}\right)r'^2 + 2\left(\frac{1-r^2}{1+2r^2}\right)r^2 = 0.\end{equation}
	From here, we can derive a first integral of the equation by use of the substitution $f(s) = \log(r(s))$. Skipping the derivation, one can simply observe that 
	\begin{equation}\label{eqn-BfC} f'^2 = C\frac{e^{4f}}{\left(1+2e^{2f}\right)^3} -1 =: B(f,C).\end{equation}
	is a first integral of equation (\ref{eqn-minimal-graph}).
	
	The theory of roots of cubics provided by Descartes' rule of signs gives that for any $C>27$, $B(f,C)$ has exactly 1 positive and 1 negative real zeroes. After taking the exponential of $f$, this gives two zeroes $r_1$ and $r_2$ with $0< r_1 < 1 <r_2 < \infty$. Theses are the minimum and maximum values of $r$ for out solution $\gamma$. As $C \to 27$, $r_i \to 1$. Thus the Clifford torus is the solution with $C= 27$. As $C\to \infty$, $r_1 \to 0 $ and $r_2 \to \infty$ monotonically. 
	
	Next we approach the question of periodicity. Solutions are bounded between $r_1$ and $r_2$, hence they oscillate between the two with some period $\psi_C$, dependent on the constant $C$. The first integral implies that 
	\[\psi_C = 2 \int_{\log r_1}^{\log r_2} \sqrt{\frac{\left(1+2e^{2f}\right)^3}{Ce^{4f}-\left(1+2e^{2f}\right)^3}} df = \int_{r_1}^{r_2} \sqrt{\frac{\left(1+2r^2\right)^3}{Cr^4-\left(1+2r^2\right)^3}}\frac{1}{r}  dr.\]
	If we can find an integer pair $(m,k)$ and a corresponding value $C(m,k)>27$ such that
	\[m \psi_{C(m,k)} = 2\pi k,\]
	then we have found a complete solution to (\ref{eqn-minimal-graph}). Unfortunately the above integral cannot be evaluated explicitly using standard methods.
	
	We aim instead to analyse the limiting behaviour as $C \to \infty$, illustrated in Figure \ref{fig-periodconvergence}. We show the following:
	\begin{lem}\label{lem-period}
		The period $\psi_C$ converges to $3\pi/2$ as $C \to \infty$. 
	\end{lem}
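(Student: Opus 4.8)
The plan is to analyze $\psi_C$, which equals twice the monotone-arc integral
\[I(C) := \int_{r_1}^{r_2} \frac{1}{r}\sqrt{\frac{(1+2r^2)^3}{Cr^4-(1+2r^2)^3}}\,dr,\]
by a boundary-layer (matched asymptotic) argument. The guiding observation is that as $C\to\infty$ the integrand tends to zero pointwise on every compact subinterval of $(0,\infty)$, where it is comparable to $C^{-1/2}(1+2r^2)^{3/2}r^{-3}$, so all of the mass of $I(C)$ concentrates in two thin layers at the turning points $r_1,r_2$. I would first pin down the endpoint scales from the defining equation $Cr^4=(1+2r^2)^3$: the small root obeys $r_1\sim C^{-1/4}$ (balancing $Cr^4$ against the constant $1$) and the large root obeys $r_2\sim\sqrt{C/8}$ (balancing $Cr^4$ against $8r^6$). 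These are justified rigorously by dominant balance together with the monotonicity of $Cr^4-(1+2r^2)^3$ already used in the Descartes'-rule discussion preceding the lemma.

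Next I would extract the two layer contributions by rescaling. Near $r_1$, set $r=C^{-1/4}u$; then $Cr^4=u^4$, $(1+2r^2)^3\to 1$, and the integrand times $dr$ converges to $\frac{du}{u\sqrt{u^4-1}}$, whose integral from $1$ to $\infty$ equals $\tfrac12\int_1^\infty\frac{dv}{v\sqrt{v^2-1}}=\tfrac{\pi}{4}$ after the substitution $v=u^2$. Near $r_2$, set $8r^2=Ct$; then $(1+2r^2)^3\to 8r^6$, $Cr^4-(1+2r^2)^3\to r^4(C-8r^2)$, and the integrand times $dr$ converges to $\tfrac12\frac{dt}{\sqrt{t(1-t)}}$, whose integral over $[0,1]$ is $\tfrac12\int_0^1\frac{dt}{\sqrt{t(1-t)}}=\tfrac{\pi}{2}$. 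Thus the two layers contribute $\tfrac{\pi}{4}$ and $\tfrac{\pi}{2}$, summing to $\tfrac{3\pi}{4}$; doubling yields $\psi_C\to\tfrac{3\pi}{2}$.

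To make this rigorous I would split $[r_1,r_2]$ into an inner piece $[r_1,C^{-1/4+\epsilon}]$, a middle piece $[C^{-1/4+\epsilon},C^{1/4}]$, and an outer piece $[C^{1/4},r_2]$ (any exponents with the right separation suffice). On the inner and outer pieces the rescaled integrands converge, and dominated convergence, dominating them for large $C$ by twice their integrable pointwise limits $u^{-1}(u^4-1)^{-1/2}$ and $t^{-1/2}(1-t)^{-1/2}$, gives the values above; the cutoffs become $C^{\epsilon}\to\infty$ and $8C^{-1/2}\to 0$, so no mass escapes to the wrong endpoints. On the middle piece one uses $Cr^4\gg(1+2r^2)^3$ (the ratio is $\geq C^{4\epsilon}$ at the left edge and $\sim C^{1/2}/8$ at the right edge) to bound the integrand by $2C^{-1/2}(1+2r^2)^{3/2}r^{-3}$ and checks directly that its integral is $O(C^{-2\epsilon})+O(C^{-1/4})\to 0$.

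I expect the main obstacle to be precisely this middle-region and matching bookkeeping. One must choose the cutoff exponents so that simultaneously (i) on each layer the discarded terms $(1+2r^2)^3-1$ and $(1+2r^2)^3-8r^6$ are uniformly negligible against the leading balance, (ii) the middle integral genuinely vanishes, and (iii) the dominating functions are integrable uniformly in $C$ through the square-root singularities at the endpoints. The analytic inputs, $\int_1^\infty v^{-1}(v^2-1)^{-1/2}\,dv=\tfrac{\pi}{2}$ and $\int_0^1 t^{-1/2}(1-t)^{-1/2}\,dt=\pi$, are elementary; the real work lies entirely in the uniform estimates that license passing to the limit under the integral sign.
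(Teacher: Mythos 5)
Your proposal is correct in outline and arrives at the right answer, but by a genuinely different route from the paper. The paper never attacks the period integral analytically: it splits $\psi_C$ into the inner part ($r<1$) and outer part ($r>1$) and sandwiches each using areas of $J$-holomorphic polygons (a quadrangle with sides on $L_1$, $L_{r_1}$ and the cone $C^0_{\psi_C^-}$ from above, and a triangle over ``radial straight lines'' from below), converting area comparisons into angle bounds via the Cieliebak--Goldstein formula; the outer period requires extra care because $r$ fails to be concave everywhere above $r=1$. Your matched-asymptotics computation instead works directly with the first integral: the turning-point scales $r_1\sim C^{-1/4}$, $r_2\sim\sqrt{C/8}$ are right, the inner-layer limit $\int_1^\infty u^{-1}(u^4-1)^{-1/2}\,du=\pi/4$ and the outer-layer limit $\tfrac12\int_0^1 t^{-1/2}(1-t)^{-1/2}\,dt=\pi/2$ reproduce exactly the paper's $\psi_C^-\to\pi/2$ and $\psi_C^+\to\pi$ after doubling, and your middle-region bound is sound (note that $Cr^4(1+2r^2)^{-3}$ is unimodal with maximum at $r=1$, so checking the ratio at the two cutoffs does control it uniformly in between). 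Your method is more elementary and self-contained; the paper's method additionally yields the monotone inequality $\psi_C^->\pi/2$ for all $C>27$ and explains the limiting values geometrically in terms of Maslov indices and polygon areas, which is in the spirit of the rest of the paper.

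One point needs repair rather than just bookkeeping: dominating the rescaled integrands by ``twice their pointwise limits'' fails in a neighbourhood of the turning points, because the actual square-root singularity sits at the moving endpoint $u_1=C^{1/4}r_1>1$ (respectively $t_2<1$), where the limit function $u^{-1}(u^4-1)^{-1/2}$ is finite while the integrand blows up. The standard fix is to excise $[u_1,u_1+\delta]$: since $h_C(u):=u^4-(1+2C^{-1/2}u^2)^3$ has $h_C(u_1)=0$ and $h_C'\geq 3$ near $u_1$ for large $C$, that piece contributes at most $2\sqrt{\delta/3}$ uniformly in $C$, after which dominated convergence applies on the remainder. With that adjustment the argument closes.
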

	
	\begin{figure}
		\centering
		\includegraphics[scale=1]{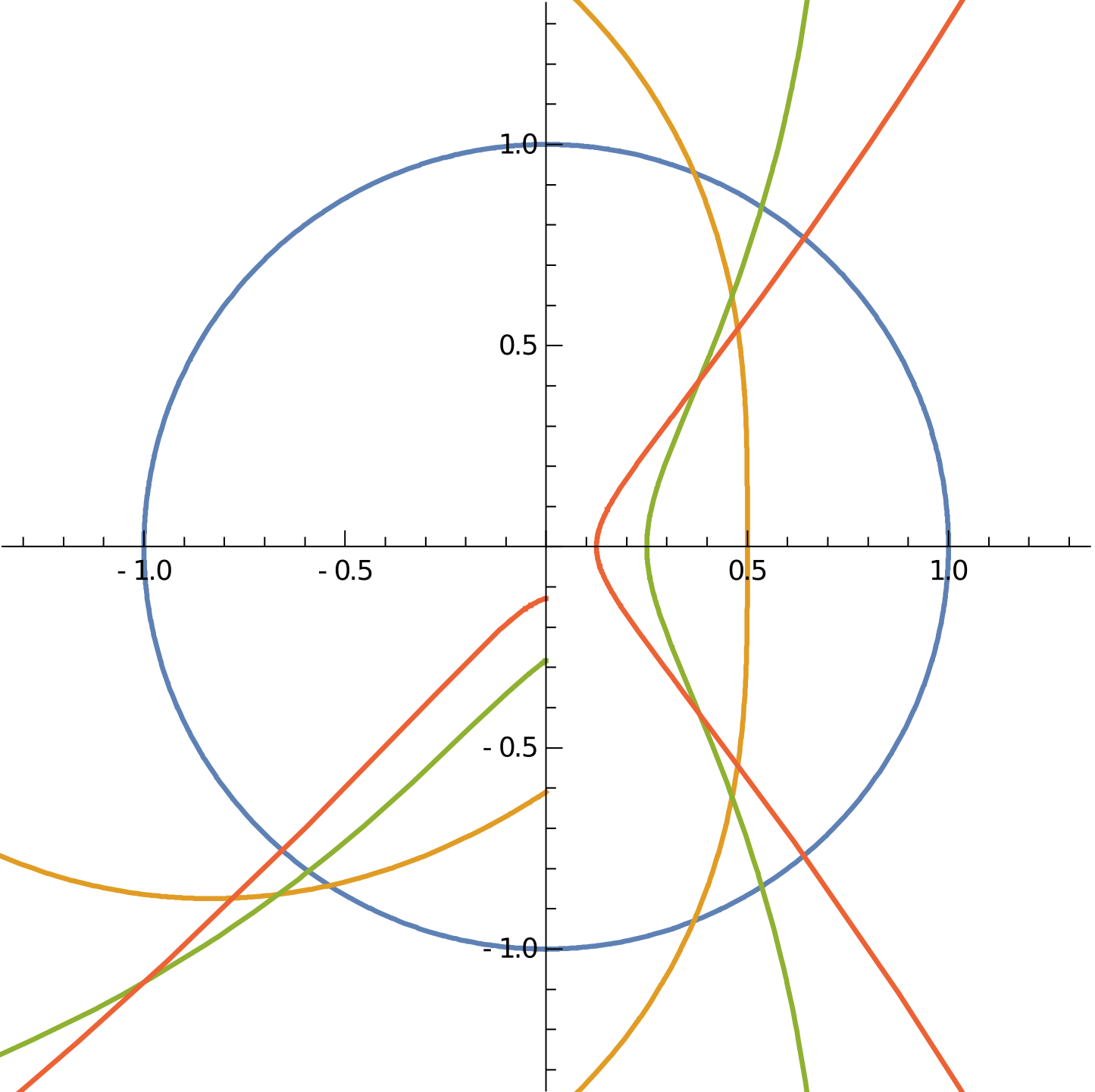}
		\caption{The inner period converges to $\pi/2$ and the outer period converges to $\pi$ as $C\to \infty$.}\label{fig-periodconvergence}
	\end{figure}
	
	\begin{proof}
		We omit many details which can be found in \cite{Evans_2022}, providing only a sketch. 
		
		\begin{enumerate}			
			\item We separate the period $\psi_C$ into two parts. The inner period $\psi_C^-$, i.e. the period where $r(s) < 1$, and the outer period, i.e. the period where $r(s) >1$. We estimate each separately using similar methods. 
		
			Beginning with the inner period, we use a geometric inequality illustrate in Figure ref{fig-uppervslower}.
			
			\begin{figure}
				\centering
 				\includegraphics[scale=0.8]{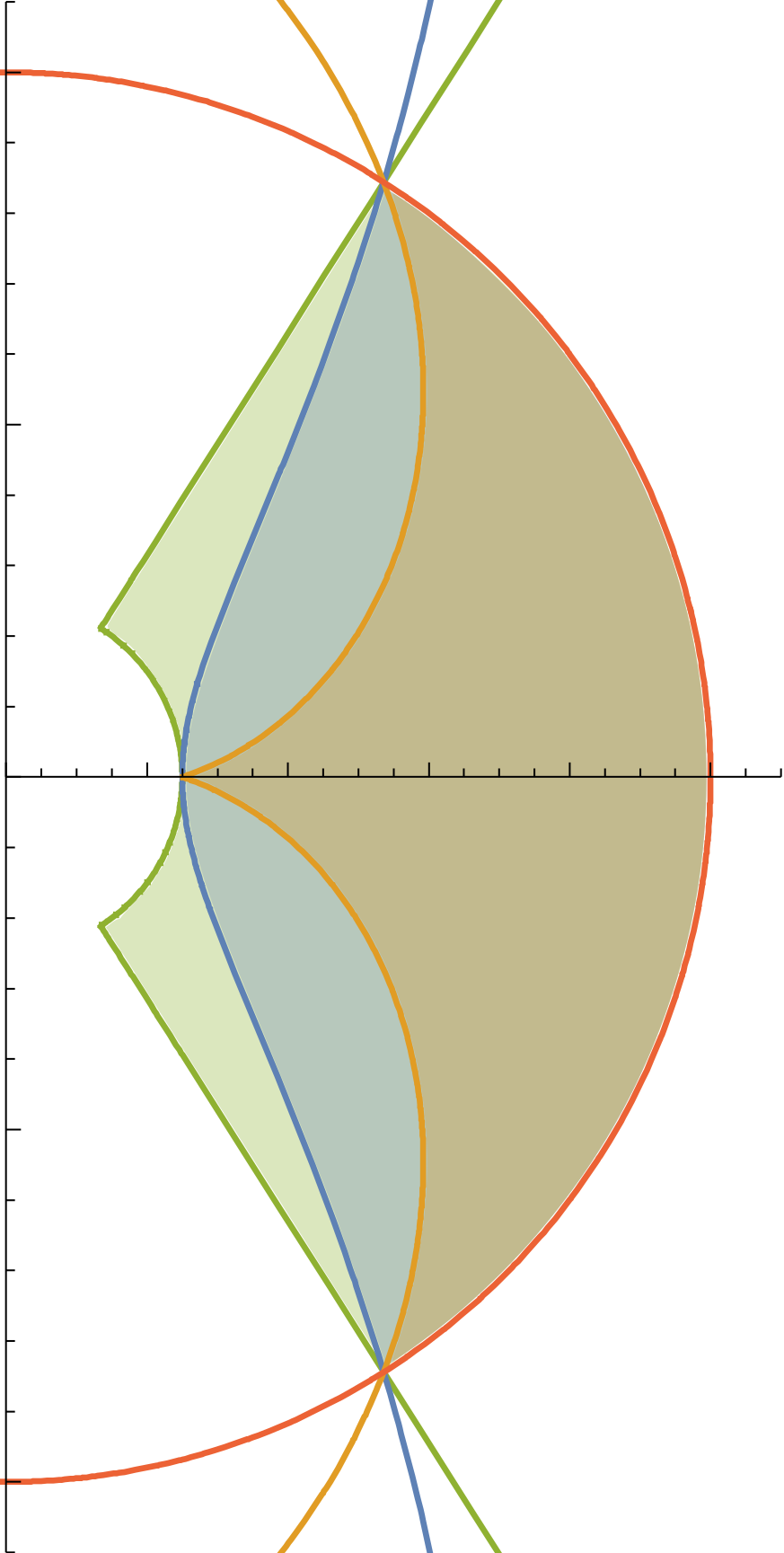}
				\caption{The area of $P$ (the blue region) is bounded above by the area of $A$ (the green region) and bounded below by the area of $B$ (the orange region).}\label{fig-uppervslower}
			\end{figure}
			
			We estimate the $J$-holomorphic biangle $P$ bounded between the unit circle and our solution $\gamma$ from above by the $J$-holomorphic quadrangle $A$ with sides on the $L_1$, $L_{r_1}$ and the cone $C^0_{\psi_C^-}$. We also estimate $P$ from below by the $J$-holomorphic triangle with sides on $L_1$ and the two ``radial straight lines'' $\eta^\pm$ joining the minimum to the intersection of $\gamma$ with $L_1$, where
			\[\eta^\pm(s) = \left(2\frac{1-r_1}{\psi_C^-}s+r_1\right)e^{\pm is}.\]
			Then we have the geometric inequalities
			\begin{equation}\label{eqn-geom-ineqs}\kappa \int_A \omega > \kappa \int_P \omega > \kappa \int_B \omega,\end{equation}
			where the second inequality holds since $r$ is convex in $s$ for $r\leq 1$.
			
			By using Cieliebak--Goldstein, we can calculate that using the upper bound that 
			\[\psi_C^-  > \frac{1+2r_1^2}{1-r_1^2}\left(\pi - \tan^{-1} \left(\sqrt{\frac{1}{27}\frac{(1+2r_1^2)^3}{r_1^4}-1}\right)\right),\]
			so $\psi_C^- > \pi/2$ for all $C>27$ and
			\begin{equation}\label{eqn-psi-lower}\lim_{C\to \infty} \psi_C^- = \lim_{r_1 \to 0} \psi_C^- \geq  \pi/2.\end{equation}
			For the lower bound, we instead calculate by performing the integral directly, and after a lengthy calculation we derive that 
			\begin{equation}\label{eqn-psi-upper} \lim_{C\to \infty} \psi_C^- = \lim_{r_1 \to 0} \psi_C^- \leq \pi/2.\end{equation}
			Combining (\ref{eqn-psi-lower}) and (\ref{eqn-psi-upper}) gives 
			\[\lim_{C\to \infty} \psi_C^- = \lim_{r_1 \to 0} \psi_C^- = \pi/2.\]
			
			\item Now we calculate the upper period, which we claim satisfies
			\[\lim_{C\to \infty} \psi_C^+ = \lim_{r_2 \to \infty} \psi_C^+ = \pi.\]
			If the geometric inequalities held, all the calculations would proceed as above and the result would follow. However, though the upper bound does hold as before, the lower bound does not a priori. This is since the $r$ is not concave for all $r>1$. However, the radius $\tilde r$ of the inflection point of $r$ is much smaller than $r_2$, i.e.
			\[ 1 < \tilde r << r_2\]
			for all $C$, and a fairly lengthy but not challenging application of Cieliebak--Goldstein yields the geometric inequality as desired. This completes the proof.			
		\end{enumerate}
	\end{proof}

	We can now prove the main theorem of this section. 
	
	\begin{thm}\label{thm-minimal-immersed-lags}
		There exists a countably infinite family of complete immersed minimal equivariant Lagrangians. In particular, for any $R>0$, there exists a complete immersed minimal equivariant Lagrangian $L_\gamma$ with $\min_{L_\gamma} r \leq R$. 
	\end{thm}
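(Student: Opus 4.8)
The plan is to reduce the existence of a complete (i.e. compact) immersed minimal equivariant torus to a closing-up condition on the period $\psi_C$, and then to produce infinitely many solutions by an intermediate value argument. A point-symmetric graphical solution $\gamma(s)=r(s)e^{is}$ of (\ref{eqn-minimal-graph}) oscillates between $r_1$ and $r_2$, sweeping an angle $\psi_C$ in the variable $s$ per full period; since $s$ is literally the polar angle of $\gamma$, such a curve closes up (and hence lifts to a complete immersed minimal equivariant torus $L_\gamma$) precisely when there exist integers $m,k$ with $m\psi_C=2\pi k$. So it suffices to show that $\psi_C$, as $C$ ranges over $(27,\infty)$, takes infinitely many values of the form $2\pi k/m$.

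First I would pin down the range of the period function $C\mapsto\psi_C$. Lemma \ref{lem-period} already supplies one endpoint, $\lim_{C\to\infty}\psi_C=3\pi/2$, together with the strict bounds $\psi_C^->\pi/2$ and $\psi_C^+>\pi$, so that $\psi_C>3\pi/2$ for every finite $C$. For the other endpoint I would linearise (\ref{eqn-minimal-graph}) about the Clifford solution $r\equiv1$: writing $r=1+u$ and dropping quadratic terms gives $u''+\tfrac43 u=0$, whose oscillations have full period $\pi\sqrt3$; equivalently, expanding the first integral in (\ref{eqn-BfC}) about $f=0$ yields $B(f,C)\approx\tfrac{C-27}{27}-\tfrac43 f^2$, from which $\lim_{C\to27^+}\psi_C=\pi\sqrt3$. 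Continuity of $C\mapsto\psi_C$ on $(27,\infty)$ follows from smooth dependence of the solution of (\ref{eqn-BfC}) on $C$, the only care being that the apparent singularities of the period integrand at the turning points $r_1,r_2$ are integrable square-root singularities, so that differentiation under the integral is justified after the standard change of variables.

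With these in hand the core argument is soft. Since $\psi_C$ is continuous with $\psi_C\to\pi\sqrt3$ as $C\to27^+$ and $\psi_C\to3\pi/2$ as $C\to\infty$, and $\pi\sqrt3>3\pi/2$, the intermediate value theorem shows that the range of $\psi_C$ contains the open interval $(3\pi/2,\pi\sqrt3)$. A value $2\pi k/m$ lies in this interval exactly when $k/m\in(3/4,\sqrt3/2)$, and this nonempty open interval contains infinitely many rationals in lowest terms. Each such rational produces a $C(m,k)>27$ with $m\psi_{C(m,k)}=2\pi k$, hence a closed profile curve; distinct rationals give distinct values of $\psi_C$, so the resulting tori are pairwise non-congruent, yielding the countably infinite family.

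For the ``in particular'' statement I would exploit that $r_1=\min_\gamma r\to0$ as $C\to\infty$. Choose rationals $k_j/m_j\to3/4^+$, so that $\psi_{C(m_j,k_j)}=2\pi k_j/m_j\to3\pi/2^+$. Because $\psi_C>3\pi/2$ strictly for every finite $C$ and $\psi_C\to\pi\sqrt3\neq3\pi/2$ as $C\to27^+$, the constants $C(m_j,k_j)$ can neither remain in a compact subset of $(27,\infty)$ (where $\psi$ has minimum strictly above $3\pi/2$) nor approach $27$; hence $C(m_j,k_j)\to\infty$ and $r_1\to0$, so for any $R>0$ all but finitely many of these tori satisfy $\min_{L_\gamma} r\leq R$. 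The main obstacle in the whole argument is the control of the period as $C\to\infty$, i.e. Lemma \ref{lem-period}, whose delicate geometric estimates — especially the lower bound for the outer period, where $r$ fails to be concave — are the genuine analytic content; once that limit and the elementary $C\to27$ limit are available, the rest is an intermediate value and density argument.
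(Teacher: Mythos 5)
Your proposal is correct in outline and follows the same architecture as the paper's proof: reduce to the closing-up condition $m\psi_C = 2\pi k$, use continuity of $C \mapsto \psi_C$ together with the limit $\psi_C \to 3\pi/2$ from Lemma \ref{lem-period}, and conclude by density of rationals. The one genuine divergence is how you produce a value of $C$ with $\psi_C$ strictly above $3\pi/2$: the paper evaluates $\psi_{54}$ explicitly by splitting the period integral into a pole-free part and a known elliptic integral, whereas you linearise (\ref{eqn-minimal-graph}) about the Clifford solution $r \equiv 1$ to get $\lim_{C\to 27^+}\psi_C = \pi\sqrt{3} > 3\pi/2$, and then apply the intermediate value theorem across the whole interval $(27,\infty)$. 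Your route is cleaner and more robust (no numerical estimate of a specific elliptic integral), and it yields the sharper statement that the range of $\psi_C$ contains all of $(3\pi/2,\pi\sqrt{3})$; the expansion $B(f,C)\approx \tfrac{C-27}{27}-\tfrac{4}{3}f^2$ checks out, though you should say a word about why the period of the nonlinear oscillation converges to the linearised period $\pi\sqrt{3}$ as the amplitude shrinks (standard, via the period integral $2\int df/\sqrt{B(f,C)}$).

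One point needs care. Your argument for the ``in particular'' clause, and your claim that the range contains the \emph{open} interval down to $3\pi/2$, both lean on the assertion that $\psi_C > 3\pi/2$ for \emph{every} finite $C>27$, which you derive from $\psi_C^- > \pi/2$ and $\psi_C^+ > \pi$. The first of these is proved in Lemma \ref{lem-period} for all $C>27$, but the second is only established there in the limit $C\to\infty$; the lower bound for the outer period is exactly the step where the geometric inequality fails a priori because $r$ is not concave for all $r>1$, and the paper's workaround is asymptotic. Without a uniform bound $\psi_C^+>\pi$, the sequence $C(m_j,k_j)$ with $\psi_{C(m_j,k_j)}\to 3\pi/2^+$ could in principle accumulate at a finite $C$ where $\psi$ dips to $3\pi/2$, and then $r_1$ would not tend to $0$. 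You can either verify that the upper-bound geometric inequality (which does hold for all $C$) already forces $\psi_C^+>\pi$, or sidestep the issue as the paper implicitly does: fix $R>0$, choose $C_0$ with $r_1(C)<R$ for all $C\geq C_0$ (using monotonicity of $r_1$ in $C$), and run the intermediate value argument on $[C_0,\infty)$, which only requires $\psi_{C_0}>3\pi/2$ for arbitrarily large $C_0$ — a consequence of $\psi_C\to 3\pi/2$ from above along a subsequence, which your two endpoint limits do give once $\psi_{C}>3\pi/2$ is known for a single cofinal set of $C$.
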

	
	\begin{proof}		
		For $C= 54$, we show that $\psi_{54} > 3\pi/2$ by an explicit calculation. We estimate the integral by separating the integrand into two parts. The first part contains no poles in the interval $[r_1,r_2]$ and hence can be estimated directly. The second part is a well-known elliptic integral which we can evaluate explicitly. The result follows. 
		
		Note that the period $\psi_C$ of a solution $\gamma_C$ to (\ref{eqn-minimal-graph}) depends continuously upon the initial condition. Since $\psi_C \to 3\pi/2$ by Lemma \ref{lem-period} and by the above $\psi_C > 3\pi/2$ for some $C>27$, we have that there exists $\delta>0$ such that for every $\psi \in (3\pi/2, 3\pi/2 +\delta)$ there exists a $C>27$ such that $\psi_C = \psi$. In particular, we can find infinitely many integer pairs $(m,k)$ and values $C(m,k)>27$ such that
		\[m \psi_{C(m,k)} = 2\pi k.\]
		Then the minimal equivariant Lagrangians $\gamma_{C(m,k)}$ described by $C(m,k)$ are complete immersed minimal equivariant Lagrangian. This is an infinitely large family of unique solutions since for every sufficiently large prime $k$, we can obtain at least one solution. 
		
		Since this argument also applies to any $\delta' < \delta$, we can construct $L_\gamma$ satisfying the second part of the theorem. 
	\end{proof}
	
	Figure \ref{fig-spirgoraphs} in the introduction illustrates the spirograph-like shape of the complete immersed minimal equivariant Lagrangians.
	
	While we are on the subject, we prove one final property of the minimal surfaces which will be useful in proving Lemma \ref{lem-scale}. The idea of the proof is similar to Lemma \ref{lem-period}, but a slightly different geometric estimate is required. Instead of estimating using a disc $B$ with boundary on a radial straight line, we use a disc $B$ with boundary on a Euclidean straight line, see Figure \ref{fig-radialvseucl}
	
	\begin{figure}
		\centering
		\includegraphics[scale=1]{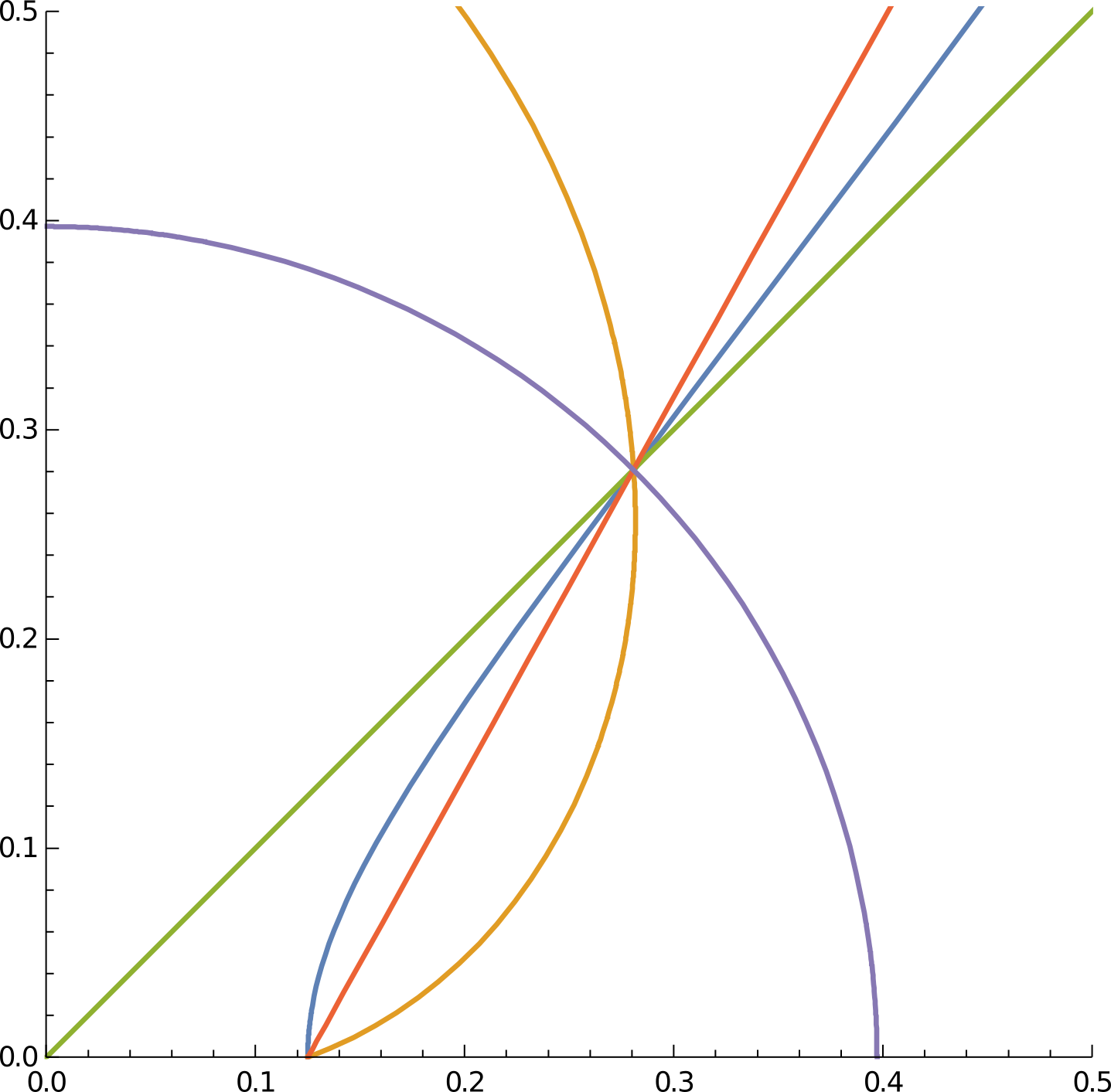}
		\caption{Radial straight lines (orange) used in Lemma \ref{lem-period} compared to the Euclidean straight line (red) used in Proposition \ref{prop-cone-radius}. Both give lower bounds for the region contained between $\gamma$ (blue) and the circle of radius $R_C$ (purple).}\label{fig-radialvseucl}
	\end{figure}
	
	\begin{prop}\label{prop-cone-radius}
		Let $\gamma_C(s) = r(s)e^{is}$, $C>27$, be a solution to equation (\ref{eqn-minimal-graph}) with initial condition $r'(0) = 0$ with $r(0) = r_1<1$. Let $R_C := r(\pi/4)$, i.e. the radius of intersection with the cone $C^0_{\pi/2}$. Then $R_C \to 0$ as $C\to \infty$.
	\end{prop}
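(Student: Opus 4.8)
The plan is to mirror the inner-period estimate of Lemma \ref{lem-period}, with the radial comparison curves replaced by a single Euclidean straight line. Recall from Section \ref{sec-minimal} that $r_1 \to 0$ as $C \to \infty$. Fix $\rho \in (0,1)$ and let $s(\rho)$ denote the angle at which $\gamma_C$ first crosses the circle $L_\rho$; since $\gamma_C$ is a radial graph with $R_C = r(\pi/4)$ and $r$ increasing, the estimate $s(\rho) > \pi/4$ is equivalent to $R_C < \rho$. As $\rho$ is arbitrary, proving $s(\rho) > \pi/4$ for all sufficiently large $C$ yields $R_C \to 0$.

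To obtain this I would apply the generalised Cieliebak--Goldstein theorem (Theorem \ref{thm-CG-gen}) to the $J$-holomorphic biangle $\Pi$ bounded by $\gamma_C$ and the arc of $L_\rho$ between the crossing points $\rho e^{\pm i s(\rho)}$, exactly as a biangle between $\gamma$ and $L_1$ is used in Lemma \ref{lem-period}. Using the biangle rather than the triangle of Example \ref{ex-triangle} avoids placing a vertex at the origin. Since $\gamma_C$ is minimal, $H$ vanishes on that part of $\partial\Pi$, while on the remaining arc $H_{L_\rho}$ is the explicit constant-coefficient form \eqref{eqn-HLr}; substituting into \eqref{eqn-CG-gen} produces a relation expressing $s(\rho)$ in terms of the symplectic area $\int_\Pi \omega$ and the turning angles of $\gamma_C$ against $L_\rho$, which are themselves determined by $r'(s(\rho))$ through the first integral \eqref{eqn-BfC}. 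The desired inequality $s(\rho) > \pi/4$ thereby reduces to a lower bound on the area enclosed between $\gamma_C$ and $L_\rho$.

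The decisive step is this area bound. Rather than the radial chords $\eta^\pm$ of Lemma \ref{lem-period}, I would bound $\int_\Pi \omega$ from below by the symplectic area of the region $B$ cut off by the Euclidean line tangent to $\gamma_C$ at its minimum --- this line is vertical, since $\gamma_C'(0) = i r_1$ is imaginary --- using that $r(s)$ is convex in $s$ while $r \le 1$, so that $\gamma_C$ remains on one side of it. The area is measured against the density $2(1+2r^2)^{-2}$ dictated by \eqref{eqn-area-disc}; the point of a Euclidean rather than a radial line is precisely to retain the ambient-curvature factor $(1+2r^2)$, since it is this curvature defect, accumulated as $r_1 \to 0$, that makes the bound strict and pushes $s(\rho)$ above $\pi/4$. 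A direct computation in fact suggests that $R_C$ decays like $(\log C)^{-1/2}$.

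I expect the main obstacle to be exactly as in the outer-period case of Lemma \ref{lem-period}: the naive comparison is delicate precisely where it matters, because the biangle degenerates as its inner boundary $\gamma_C$ descends toward the cone point at the origin. The turning-angle contributions in \eqref{eqn-CG-gen} and the Maslov count must therefore be controlled uniformly through this degeneration, and the Euclidean-line estimate must be sharp enough to dominate the $O(r_1) = O(C^{-1/4})$ boundary-layer error arising from the neighbourhood of the minimum. Verifying that the curvature term wins this competition is the crux; the full computation is carried out in \cite{Evans_2022}.
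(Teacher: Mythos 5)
Your overall strategy is in the right spirit --- the paper's proof is likewise a Euclidean straight-line comparison against the biangle between $\gamma_C$ and a circle, fed through Cieliebak--Goldstein --- but the comparison region you choose gives an inequality in the wrong direction, and a prerequisite step is missing. The region cut off inside the disc by the vertical tangent line $x=r_1$ \emph{contains} the biangle $\Pi$ rather than being contained in it: granting your own premise that $\gamma_C$ stays on the side $x\ge r_1$, every point of $\Pi$ at angle $\phi$ has $x=r\cos\phi\ge r(\phi)\cos\phi\ge r_1$, while the segment $\{x\ge r_1\}$ also contains the entire region enclosed by $\gamma_C$; the complementary segment $\{x\le r_1\}$ is disjoint from $\Pi$ altogether. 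So your construction yields an \emph{upper} bound for $\int_\Pi\omega$, whereas the contradiction requires a \emph{lower} bound. The paper instead takes for $B$ the region bounded by the arc of $L_{R_C}$ and the two Euclidean chords $\eta^\pm$ joining the minimum point $r_1$ to $R_Ce^{\pm i\pi/4}$; convexity places $\gamma_C$ on the correct side of these chords, so $B\subset P$ and $\kappa\int_P\omega>\kappa\int_B\omega$, which is the inequality that forces $R_C\to 0$.

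Second, the convexity you invoke is the wrong one. What justifies any Euclidean straight-line comparison here is convexity of $x(y)$ for $\gamma_C$ viewed as a graph over the imaginary axis, and this holds only for $r<1/2$ --- it is not the convexity of $r(s)$ for $r\le 1$ used in Lemma \ref{lem-period}. For this reason the paper must first prove Lemma \ref{lem-RC<1/2}, that $R_C<1/2$ for $C$ large, by exhibiting hyperbolas $a^2x^2-y^2=c^2$ as subsolutions of the quasilinear operator associated to (\ref{eqn-minimal-graph}) and applying the comparison principle. Without this preliminary bound the portion of $\gamma_C$ inside the cone need not be a convex graph over the $y$-axis and no straight-line barrier is available; in particular the fact that $\gamma_C$ stays on one side of its vertical tangent line, which you assert from convexity of $r(s)$ alone, already requires it. (Your fixed-circle reformulation $s(\rho)>\pi/4$ versus the paper's direct use of $L_{R_C}$ is an inessential difference, and the decay rate you conjecture is not addressed in the paper.)
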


	We need a lemma to prove Proposition \ref{prop-cone-radius}, which guarantees the Euclidean straight lines give lower bounds for sufficiently large $C>27$.	
	
	\begin{lem}\label{lem-RC<1/2}
		Let $\gamma_C$ be as in the statement of Proposition \ref{prop-cone-radius}. Then $R_C < 1/2$ for $C$ sufficiently large. 
	\end{lem}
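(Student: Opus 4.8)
The plan is to convert the geometric assertion $R_C<1/2$ into a one-variable inequality about the angular coordinate, and then to resolve a borderline estimate. Since $\gamma_C(s)=r(s)e^{is}$ begins at its minimum $r(0)=r_1$ with $r'(0)=0$ and $r$ increases strictly up to the first maximum at angle $\psi_C/2$, which exceeds $\pi/4$ once $C$ is large by Lemma~\ref{lem-period} (as $\psi_C\to 3\pi/2$), the angle $s(R)$ at which $\gamma_C$ first attains radius $R$ is a strictly increasing function of $R$ on $(r_1,r_2)$. Because $R_C=r(\pi/4)$ is characterised by $s(R_C)=\pi/4$, it suffices to prove $s(1/2)>\pi/4$ for $C$ sufficiently large; monotonicity of $s$ then forces $R_C<1/2$.

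First I would write $s(R)$ explicitly from the first integral (\ref{eqn-BfC}). With $f=\log r$, so that $(df/ds)^2=B(f,C)$, the condition $r'(0)=0$ pins down $C=(1+2r_1^2)^3/r_1^4$, so that $r_1\to 0$ as $C\to\infty$. Substituting $v=(r/r_1)^2$ and writing $\rho_1:=r_1^2$ turns the angle integral into
\[ s(1/2)=\frac12\int_1^{1/(4\rho_1)}\frac{dv}{v\sqrt{v^2\phi(v)-1}},\qquad \phi(v):=\left(\frac{1+2\rho_1}{1+2\rho_1 v}\right)^3. \]
The virtue of this normalisation is that as $\rho_1\to 0$ the integrand converges to $(v\sqrt{v^2-1})^{-1}$ and the upper endpoint tends to $\infty$; the resulting limit integral is evaluated exactly by $v=\sec\theta$, giving $\tfrac12\int_1^\infty (v\sqrt{v^2-1})^{-1}\,dv=\pi/4$.

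The crux, and the main obstacle, is that this limit equals $\pi/4$ \emph{exactly}: the lemma is not a leading-order statement but depends on the sign of the subleading correction. To capture it I would write
\[ 2s(1/2)-\frac{\pi}{2}=\mathcal E-\mathcal T, \]
where $\mathcal T=\int_{1/(4\rho_1)}^\infty (v\sqrt{v^2-1})^{-1}\,dv=\arcsin(4\rho_1)$ is the mass lost by truncating the upper limit, of size $O(\rho_1)$ with $\mathcal T\le 5\rho_1$ for $\rho_1$ small, and $\mathcal E=\int_1^{1/(4\rho_1)}\big(v\sqrt{v^2\phi-1}\big)^{-1}-\big(v\sqrt{v^2-1}\big)^{-1}\,dv\ge 0$ is the gain coming from $\phi(v)<1$ for $v>1$. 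Everything reduces to showing that the positive enhancement $\mathcal E$ beats the tail $\mathcal T$.

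To bound $\mathcal E$ from below I would not expand $\phi$ globally (the naive small-$\rho_1$ expansion is valid only where $\rho_1 v\ll 1$) but instead restrict the integral to $v\in[2,1/(8\rho_1)]$, where $2\rho_1 v\le 1/4$ keeps $\phi$ controlled. There the elementary estimate $1-\phi(v)\ge c\,\rho_1 v$, combined with the identity $a^{-1/2}-b^{-1/2}=(b-a)/(\sqrt{ab}(\sqrt a+\sqrt b))$ taken with $a=v^2\phi-1<b=v^2-1$ and the bound $\sqrt{ab}(\sqrt a+\sqrt b)\le 2b^{3/2}\le 2v^3$, bounds the integrand below by $c'\rho_1/v$, whence $\mathcal E\ge c'\rho_1\log(1/(16\rho_1))$. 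Since $\rho_1\log(1/\rho_1)$ dominates the linear tail, we obtain $2s(1/2)-\pi/2\ge \rho_1\big(c'\log(1/(16\rho_1))-5\big)>0$ for $\rho_1$ small, i.e. $C$ large. This gives $s(1/2)>\pi/4$ and hence $R_C<1/2$, completing the argument. I expect the only genuinely delicate point to be making the logarithmic lower bound on $\mathcal E$ honest uniformly over the long window $[2,1/(8\rho_1)]$; restricting to this window, rather than expanding $\phi$ pointwise everywhere, is precisely what makes the bound rigorous.
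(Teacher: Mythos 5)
Your argument is correct, but it is a genuinely different proof from the paper's. The paper establishes this lemma by producing an explicit family of hyperbolas $a^2x^2-y^2=c^2$ that meet the cone $C^0_{\pi/2}$ at radius below $1/2$, checking that they are strict subsolutions of the quasilinear operator $Q$ extracted from (\ref{eqn-minimal-graph}), and then invoking the comparison principle to trap $\gamma_C$ beneath the hyperbola once $r_1\le c$. You instead work directly from the first integral (\ref{eqn-BfC}): the initial condition $r'(0)=0$ pins down $C=(1+2r_1^2)^3/r_1^4$, the substitution $v=(r/r_1)^2$ turns the angle swept in reaching radius $1/2$ into an explicit integral whose $\rho_1\to0$ limit is exactly $\pi/4$, and you resolve the borderline case by showing the enhancement $\mathcal{E}\gtrsim \rho_1\log(1/\rho_1)$ coming from $\phi<1$ dominates the truncation loss $\mathcal{T}=\arcsin(4\rho_1)=O(\rho_1)$. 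I verified the individual steps (the monotonicity of $r$ on $[0,\psi_C/2]\supset[0,\pi/4]$ that justifies the reformulation $R_C<1/2\iff s(1/2)>\pi/4$; the bound $1-\phi\ge\tfrac32\rho_1 v$ on $[2,1/(8\rho_1)]$ via $a^3-b^3\ge 3(a-b)$ and $(1+2\rho_1 v)^3\le(5/4)^3$; the bound $\sqrt{ab}(\sqrt a+\sqrt b)\le 2v^3$) and they all hold, with positivity of $v^2\phi-1$ on the relevant range guaranteed since $1/2<r_2$. The trade-off is this: the paper's barrier argument sidesteps the delicate second-order asymptotics entirely and is more robust to the fact that the leading term is exactly $\pi/4$, whereas your computation is quantitative---it gives $s(1/2)-\pi/4\gtrsim r_1^2\log(1/r_1)$---and, run with an arbitrary fixed $R\in(0,1)$ in place of $1/2$, it immediately yields $R_C<R$ for $C$ large, i.e.\ it proves Proposition \ref{prop-cone-radius} directly and would make the separate holomorphic-triangle estimate given there redundant.
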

	
	\begin{proof}
	The idea of the proof is to find a subsolution (a hyperbola) with the desired behaviour, and then use the comparison principle to obtain the result. 
	
	Consider the hyperbolas $\gamma_{a,c}$ given by 
	\[a^2 x^2 - y^2 = c^2\]
	for constants $a,c>0$. For sufficiently small $c>0$ and $a>0$ given by
	\[a^2 = \left(2c^2 +\frac{1}{2}\right) + \sqrt{\left(2c^2 +\frac{1}{2}\right)^2 + 2c^2},\]
	we have that $\gamma_{a,c}$ intersects $C^0_{\pi/2}$ at $R <1/2$. We leave the verification of this fact to the interested reader, or alternatively refer them to \cite{Evans_2022}.
	
	Let $C$ be any constant such that $\gamma_C$ has $r_1 \leq c$. We claim this implies that $\gamma_C$ intersects the cone $C^0_{\pi/2}$ at a radius less than $R < 1/2$. Suppose not. Then $\gamma_C$ intersects $\gamma_{a,c}$ at two points inside the cone $C^0_{\pi/2}$. Consider the quasilinear elliptic operator $Q(f)$ given by 
	\[Q(f) = -f'' + 2\frac{1-r^2}{1+2r^2} f'^2 + 2\frac{1-r^2}{1+2r^2}\]
	where $f= \log r$, see equation (\ref{eqn-minimal-graph}. Let $f_C$ and $f_{a,c}$ be the logarithms of the radius functions of $\gamma_C$ and $\gamma_{a,c}$ respectively. Then we have that $Q(f_C) = 0$ and $Q(f_{a,c}) < 0$. Furthermore, we have that for $s \in [-S,S] \subset (-\pi/4,\pi/4)$, $f_C(s) \leq f_{a,c}(s)$ with $f_C(S) = f_{a,c}(S)$. Since 
	\[\partial_r \left(\frac{1-r^2}{1+2r^2}\right) < 0\]
	for all $r$, we can apply the comparison principle for quasilinear elliptic operators to deduce that 
	\[f_C(s) \geq f_{a,c}(s)\]
	for all $s \in [-S,S]$, a contradiction.
	\end{proof}
	
	\begin{proof}[Proof of Proposition \ref{prop-cone-radius}]
		By Lemma \ref{lem-RC<1/2}, we have that $r_1 < R_C < 1/2$ for sufficiently large $C >27$, and hence $r(s) < 1/2$ for all $s \in [-\pi/4,\pi/4]$. Consider now $\gamma$ as a graph over the $y$-axis, i.e. $\gamma(y) = x(y) +i y$ for $y$ in an interval $I$ containing $0$. Elementary calculation gives that $x(y)$ is convex as a function of $y$ for all $r<1/2$. Hence the Euclidean straight line $\eta^\pm$ connecting the minimum value $r_1$ with $R_C e^{\pm i \pi/4}$ does not intersect $\gamma_C$ for $s \in (0,\pi/4)$. Similar to the proof of Lemma \ref{lem-period}, denote by $P$ the $J$-holomorphic biangle bounded by $\gamma$ and the circle $L_{R_C}$, and by $B$ the $J$-holomorphic triangle with boundary on $\eta^\pm$ and $L_{R_C}$.
		
		Then we have the geometric inequality 
		\[\kappa \int_P \omega > \kappa \int_B \omega.\]
		Proceeding in a similar manner to Lemma \ref{lem-period}, we find after some calculation (see \cite{Evans_2022} for details) that the geometric inequality gives
		\[\pi R_C^2 \left(\frac{3}{1+2R_C^2} - \frac{3}{2} \right) > \pi - 2 \tan^{-1}\left(\sqrt{\frac{(1+2r_1^2)^3}{r_1^4}\frac{R_C^4}{(1+2R_C^2)^3}-1}\right) -3 \sqrt 2 r_1 R_C.\]
		If $R_C$ is bounded below by $\varepsilon>0$, the right-hand side converges to 0 as $C\to \infty$ while the left-hand side is strictly less than 0, a contradiction. This completes the proof.
	\end{proof}	
	
	\section{Lagrangian mean curvature flow of equivariant Lagrangian tori in  \texorpdfstring{$\bCP^2$}{CP2}}
	
	\subsection{A Thomas--Yau-type theorem}
	
	The goal for this section is to prove Theorem \ref{thm-main-sum}. We give now an overview of the method of proof, the details of which will be contained in the following sections. First, we state a more detailed version of Theorem \ref{thm-main-sum}.
	
	\begin{thm}
		Let $L$ be an equivariant Lagrangian torus in $\bCP^2$ which intersects the cone $C^0_{\pi/2}$ a total of $4n$ times, for $n \geq 1$. With $L$ evolving by mean curvature flow, the following hold:
		\begin{enumerate}
			\item If $n = 1$, then $L$ becomes graphical over the minimal Clifford torus $L_1$ in finite time, after which $L$ exists for all time and converges to $L_1$ in infinite time. 
			
			\item If $L$ is a Chekanov torus with $n = n_0 > 1$, $L$ has a finite-time singularity at the origin. By performing a neck-to-neck surgery before the singular time, $L$ becomes a Clifford torus with $n \leq  n_0 - 1$.
			
			\item If $L$ is a Clifford torus with $n = n_0 > 1$, then after a sufficiently large time, $L$ has either has $n = 1$ or $L$ has attained a finite-time singularity at the origin. In the latter case, by performing a neck-to-neck surgery before the singular time, $L$ becomes a Chekanov torus with $n \leq n_0 - 1$.
		\end{enumerate}
	\end{thm}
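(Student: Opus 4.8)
The plan is to establish the three cases separately and then run an induction on $n$, using as the controlling quantity the intersection number $4n$ of the profile curve with the cone $C^0_{\pi/2}$. Two structural facts organise everything. First, by Theorem \ref{Monotone-typeI} the flow has no type~I singularities, so by the singularity analysis of Section \ref{sec-sing} every finite-time singularity of an equivariant torus is a Lawlor neck at the origin with type~I blow-up $C^0_{\pi/2}$. Second, since the monotone condition makes $H$ exact (Corollary \ref{Exact is monotone}), the smooth flow is a Hamiltonian isotopy, so the Clifford/Chekanov type is preserved until a surgery occurs, while the neck-to-neck surgery of Section \ref{sec-surgery} both switches this type and strictly lowers $n$. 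I would first record that $4n$ cannot increase along the smooth flow — the two lines comprising $C^0_{\pi/2}$ are stationary minimal Lagrangians, so an intersection-comparison (maximum-principle) argument for the profile curve against geodesics through the origin applies — and that each surgery drops it by at least one.

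\textbf{Case 1.} I would begin with a winding-number observation: a curve meeting $C^0_{\pi/2}$ in exactly four points must be a single loop enclosing the origin, i.e. of Clifford type, since any two-component (Chekanov) configuration compatible with the $(S^1\times\bZ_2)$-symmetry meets the cone in a number of points divisible by eight. With only the minimal intersection budget no neck can pinch at the origin (a Lawlor neck there forces $n\geq 2$), so there is no finite-time singularity and, by Proposition \ref{Prop-infinite-sing}, the flow exists for all time. I would then show, using the opening-angle estimate of Lemma \ref{lem-tangential-triangle}, that the curve becomes a radial graph over $L_1$ in finite time, after which Section \ref{sec-graph-Cliff} supplies long-time existence and convergence to the minimal Clifford torus $L_1$.

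\textbf{Cases 2 and 3.} For a Chekanov torus with $n_0>1$, Section \ref{sec-Chek} guarantees a finite-time singularity, which by the above is a Lawlor neck at the origin; the Scale Lemma (Lemma \ref{lem-scale}) localises it at an arbitrarily small scale, so I can perform the neck-to-neck surgery in a small ball just before the singular time, switching to Clifford type with $n\leq n_0-1$. For a Clifford torus with $n_0>1$ I would argue a dichotomy. If some local maximum opening angle stays above $\pi/2$, then $\pi-2\psi<0$ in Lemma \ref{lem-tangential-triangle}, forcing the associated triangle area to zero in finite time and hence a neck singularity at the origin; surgery then yields a Chekanov torus with $n\leq n_0-1$. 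Otherwise the flow exists for all time and, by Proposition \ref{Prop-infinite-sing}, converges to a minimal equivariant torus; since the only embedded minimal equivariant torus is $L_1$ (there is no minimal Chekanov torus by Section \ref{sec-Chek}, and the immersed examples of Section \ref{sec-minimal} have $n>1$), the count has dropped to $n=1$ and we are back in Case~1.

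\textbf{Induction and main obstacle.} Each surgery strictly decreases the positive integer $n$, so starting from $n=n_0$ at most $n_0-1$ surgeries occur before the flow reaches $n=1$ and Case~1 delivers convergence to $L_1$, which proves Theorem \ref{thm-main-sum}. I expect the hard part to be Case~3: establishing the clean dichotomy for Clifford tori — that the flow either genuinely pinches a neck at the origin in finite time or else reduces to $n=1$ — while ruling out any other long-time behaviour. This is exactly where the opening-angle monotonicity of Lemma \ref{lem-tangential-triangle}, the singularity classification of Section \ref{sec-sing}, the non-increase of the cone intersection number, and the identification of $L_1$ as the unique embedded minimal equivariant limit must all be combined; ensuring that the surgery can be carried out strictly before the singular time (via Lemma \ref{lem-scale}) and that it genuinely lowers $n$ is the other delicate ingredient.
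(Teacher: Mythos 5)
Your overall architecture coincides with the paper's: the theorem is assembled from the singularity classification of Section \ref{sec-sing} (Proposition \ref{Wood-lemmas}: singularities only at the origin, blow-up $C^0_{\pi/2}$, intersection numbers with cones non-increasing), long-time existence for graphical Clifford tori (Proposition \ref{Prop-AC-Cliff}), the finite-time singularity of Chekanov tori (Corollary \ref{Prop-Chek-sing}), the fact that neck-to-neck surgery strictly decreases $n$, and an induction on $n$; your parity observation that $n=1$ forces Clifford type (an equivariant Chekanov configuration meets $C^0_{\pi/2}$ in a multiple of eight points) is correct and makes explicit something the paper leaves implicit.

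The genuine gap is your Case 1, where you dismiss finite-time singularities on the grounds that ``a Lawlor neck at the origin forces $n\geq 2$.'' This is unjustified and, I believe, false: the equivariant Lawlor neck asymptotic to $C^0_{\pi/2}$ and crossing the real axis has profile lying strictly inside the sector $|\arg z|<\pi/4$, so the neck and its point-symmetric image contribute \emph{no} intersections with $C^0_{\pi/2}$; closing the four asymptotic ends by one arc in the upper half-plane and one in the lower half-plane, each crossing the half-lines at angles $\pi/4$ and $3\pi/4$ (respectively $-\pi/4$ and $-3\pi/4$) exactly once, produces an equivariant Clifford configuration with exactly four intersections, i.e.\ $n=1$, which is nevertheless pinching at the origin. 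Intersection counting therefore cannot rule out the singularity; what does is monotonicity of area, which is precisely the content of Proposition \ref{Prop-AC-Cliff}: as the neck pinches, the triangles adjacent to the real axis lose all their symplectic area, so the Maslov 4 disc is squeezed into the sectors about the imaginary axis, whose total area is at most $\pi/2+2\varepsilon<2\pi/3$, contradicting the monotone normalisation $\int_D\omega=2\pi/3$. You must route Case 1 through this area argument (or an equivalent), not through the intersection budget. A secondary, smaller issue: your step from ``$n=1$'' to ``graphical in finite time'' is only gestured at via Lemma \ref{lem-tangential-triangle}; since $n=1$ does not by itself imply graphicality, some argument (e.g.\ monotonicity of the intersection number with every cone $C^a_b$ from Proposition \ref{Wood-lemmas} combined with smooth subsequential convergence to the embedded minimal torus $L_1$ from Proposition \ref{Prop-infinite-sing}) still needs to be supplied.
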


	\begin{proof}
		The proof is the sum total of the results in the following sections. 
		
		\begin{enumerate}
			\item The main results of Section \ref{sec-sing} imply number of intersections with $C^0_{\pi/2}$ is strictly decreasing under the flow and  that any singularity occurs at the origin with blow-up given by $C^0_{\pi/2}$.
			
			\item In Section \ref{sec-graph-Cliff}, we prove that graphical Clifford tori exist for all time and converge to $L_0$.  
			
			\item In Section \ref{sec-Chek}, we prove that Chekanov tori always have finite-time singularities.
			
			\item In Section \ref{sec-surgery}, we define a neck-to-neck surgery procedure which by definition strictly decreases $n$, and complete the proof by dealing with the case of Clifford tori with $n > 1$.
		\end{enumerate}
	\end{proof}

	\subsection{Singularities for equivariant tori} \label{sec-sing}
	
	In this section we replicate many of the results which are known for equivariant flows in $\bC^2$ by the work of Neves (\cite{Neves2007}, \cite{Neves2010}) and Wood (\cite{Wood2019}, \cite{wood_2020})\footnote{For the readers convenience, we summarise some background information on type II singularities and blow-ups in Lagrangian mean curvature flow in Appendix \ref{appendix}.}. The corresponding section \cite{Evans_2022}[Section 4.7] comprises one of the longest and most technical sections of the paper, but the overall heuristic is simple. Since singular behaviour of a flow is a local phenomenon, and any blow-up procedure ``blows away'' the ambient curvature, we should expect any results that hold in $\bC^2$ to also hold in $\bCP^2$.

	As a first and important example of this, we show there is a direct analogue of the monotone version of Neves' Theorem B \cite{Neves2010} that holds in $\bCP^2$. 
	
	\begin{lem}\label{Theorem-B-CP2}
		Let $L$ be a monotone Lagrangian mean curvature flow in $\bCP^2$ with a finite-time singularity at $T<\infty$. For any sequence $(L_j^s)$ of rescaled flows, the following property holds for all $R>0$ and almost all $s<0$:
		
		For any sequence of connected components $\Sigma_j$ of $B_{4R}(0) \cap L_j^s$ that intersect $B_R(0)$, there exists a special Lagrangian cone $\Sigma$ in $B_{2R}(0)$ with Lagrangian angle $\bar \theta$ such that, after passing to a subsequence,
		\[\lim_{j\to \infty} \int_{\Sigma_j} f(\exp (i \theta_j^s)) \phi d \mathcal H^2 = m f(\exp(i \bar \theta)) \mu(\phi)\]
		for every $f \in C(S^1)$ and every smooth $\phi$ compactly supported on $B_{2R}(0)$, where $\mu$ and $m$ denote the Radon measure of the support of $\Sigma$ and its multiplicity respectively.
	\end{lem}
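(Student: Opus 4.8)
The plan is to adapt the proof of Neves' Theorem B (\cite{Neves2010}) for monotone flows in $\bC^2$ to the positively curved ambient $\bCP^2$, exploiting the heuristic that parabolic rescaling ``blows away'' the ambient curvature. First I would fix the rescaling. By the preliminary analysis of the singular set we may assume the singularity forms at the origin of a coordinate chart $\bC^2 \subset \bCP^2$; writing the rescaled flows as $L_j^s = \eta_j\, L_{T + \eta_j^{-2} s}$ in normal coordinates with $\eta_j \to \infty$, the rescaled ambient metrics $g_j = \eta_j^2 g$ converge in $C^\infty_{\mathrm{loc}}$ to the flat metric on $\bC^2$, since the Fubini--Study metric is smooth. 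In particular the local holomorphic volume form converges to the standard $dz_1 \wedge dz_2$, with respect to which the Lagrangian angles $\theta_j^s$ are defined, so that the rescaled flows are asymptotically flat Lagrangian mean curvature flows.

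Next I would extract a limit. Huisken's monotonicity formula holds in the Riemannian setting with an error term controlled by the ambient curvature and the scale; because rescaling drives the local curvature to zero like $\eta_j^{-2}$, this error is negligible and one obtains uniform bounds on the Gaussian density ratios. By White's compactness theorem for integral Brakke flows (equivalently the compactness theorem for integral varifolds with locally bounded first variation), after passing to a subsequence the $\Sigma_j$ converge as varifolds, for almost every $s < 0$, to an integral varifold $\Sigma$ with Radon measure $\mu$ and integer multiplicity $m$. The monotonicity formula moreover forces the backward limit to be a self-shrinker, exactly as in the flat case.

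The crucial step is to show that the Lagrangian angle is asymptotically constant on the limit. Here I would adapt Neves' angle-oscillation estimate. Recall from Theorem \ref{thm-intro-H=dtheta} that $H = d\theta + \alpha$, where $\alpha(X) = H_{L_r}(\pi X)$ is a defect arising from the ambient fibration, and that $\partial_t \theta = \Delta \theta + d^\dagger \alpha$. The monotone condition, via Corollary \ref{Exact is monotone}, makes $H$ exact with fixed constant $\pi/\kappa$ and so bounds the oscillation of $\theta$ uniformly; meanwhile $\alpha$ is built from the curvature $H_{L_r}$ of the fibration and hence scales like $\eta_j^{-1}$, so that $\alpha$ and $d^\dagger \alpha$ vanish in the limit and the limiting evolution is effectively $\partial_t \theta = \Delta \theta$. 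Feeding the weighted quantity $\int (\theta - \bar\theta)^2 \rho \, d\mu_t$ into the monotonicity formula along the rescaling and integrating in $s$ yields $\int\!\!\int |\nabla \theta|^2 \rho \to 0$, which forces $\theta \equiv \bar\theta$ on each connected component of the limit. This is precisely the factorisation $f(\exp(i\theta_j^s)) \to f(\exp(i\bar\theta))$ appearing in the statement. Finally, a self-shrinker with constant Lagrangian angle has $\vec H = J\nabla\theta = 0$, hence is minimal, and a minimal self-shrinker satisfies $F^\perp = 0$ and is therefore a cone; thus $\Sigma$ is a special Lagrangian cone, as claimed.

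The main obstacle is the angle-oscillation estimate in the presence of the ambient defect $\alpha$ together with the non-flat error in Huisken's monotonicity formula. One must quantify that both the contribution of $d^\dagger\alpha$ to the evolution of $\theta$ and the monotonicity error are dominated by negative powers of $\eta_j$ on the rescaled flows, so that Neves' argument passes to the limit unchanged. Given how technical this verification is, I would organise the proof around these two smallness estimates and refer to \cite{Evans_2022} for the remaining calculations.
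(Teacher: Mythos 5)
Your approach is genuinely different from the paper's. The paper does not blow up directly in $\bCP^2$: it uses the Castro--Lerma correspondence to lift the monotone flow through the Hopf fibration to a Lagrangian mean curvature flow of $3$-tori in $\bC^3$ (singular along an $S^1$ at time $\tilde T<1/2$), where Neves' Theorem A applies verbatim, and then upgrades to Theorem B. Your route --- rescaling in normal coordinates, showing the ambient metric, the monotonicity error, and the fibration defect $\alpha$ all decay like powers of $\eta_j^{-1}$ --- is a reasonable alternative, and it has the advantage of keeping the Lagrangians $2$-dimensional.

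However, there is a genuine gap at the decisive step. You assert that $\iint |\nabla\theta|^2\rho \to 0$ ``forces $\theta\equiv\bar\theta$ on each connected component of the limit.'' That implication is precisely the content of Theorem B over and above Theorem A, and it is false without further input: an $L^2$-small gradient only yields a varifold limit supported on a union of cones with finitely many angles (the Theorem A conclusion); a single connected component $\Sigma_j$ could still carry two different nearly-constant values of $\theta$ joined across a region of small measure. Ruling this out is exactly what Neves' Lemma 5.2 does, via a uniform isoperimetric inequality $(\mathcal H^n(A))^{(n-1)/n}\leq C\,\mathcal H^{n-1}(\partial A)$ on the connected components combined with the coarea formula applied to the level sets of $\theta$. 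The paper identifies this lemma as the \emph{only} part of Neves' Theorem B that fails to transfer, and its proof has to work to recover it (Michael--Simon in positive ambient curvature plus non-collapsing of the Hopf fibre, because in the lifted picture the dimension is $3$ and the exponents in Michael--Simon no longer match). Your proposal never mentions this inequality, so as written it proves only Theorem A in $\bCP^2$, not the per-component statement. The gap is fillable in your framework --- indeed, since your blow-up stays $2$-dimensional, Neves' original Lemma 5.2 should apply with only the curvature-decay errors you already track --- but the isoperimetric estimate must be stated and verified on the rescaled flows; it cannot be absorbed into the oscillation estimate for $\theta$.
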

	
	\begin{proof}
		By the work of Castro--Lerma (\cite{castro_lerma_miquel_2018}) (see section \cite{Evans_2022}[Section 3.3] for a concise explanation), a monotone torus in $\bCP^2$ lifts to a monotone spherical Lagrangian $3$-torus in $S^5 \subset \bC^3$. The flow also lifts, becoming a flow with a singularity at a time $\tilde T <1/2$ (recall that $t=1/2$ is the singular time of the unit sphere $S^5$). The singularity in the lift occurs along an $S^1$, the Hopf fibre above the singular point of the original flow.
		
		At this point we can already apply \cite[Theorem A]{Neves2010} to show that we have convergence to a finite set of special Lagrangian cones with angles $\theta_k$. We want to show that we can instead apply \cite[Theorem B]{Neves2010}, which a priori only applies in $\bC^2$. Indeed, the only part of the proof of Theorem B which does not hold in higher dimensions is \cite[Lemma 5.2]{Neves2010}. So we have to show that for all $j$ sufficiently large, there exists some $C>0$ such that
		\begin{equation}\label{Nev-lem-gen}\left(\mathcal H^3(A)\right)^{2/3} \leq C \mathcal H^2(\partial A),\end{equation}
		for any open subset $A$ of $L^j_s \cap B_{6R}(0)$ with rectifiable boundary. The proof of \cite[Lemma 5.2]{Neves2010} doesn't hold immediately since the dimension is too high to apply the Michael--Simon Sobolev inequality directly in $\bC^3$. Instead, we apply the Michael--Simon Sobolev inequality for Riemannian manifolds with positive curvature to the flow in $\bCP^2$ and, by arguing that the Hopf fibre is non-collapsing at the final time, we are able to lift the resulting inequality to $\bC^3$ up to a constant. Then we can apply the Michael--Simon Sobolev inequality in $\bC^3$ and deduce the result. See \cite{Evans_2022} for details.
	\end{proof}
	
	We now state the main results
	
	\begin{prop}\label{Wood-lemmas}
		Suppose $L_\gamma$ is an equivariant monotone Clifford or Chekanov torus in $\bCP^2$. Let $T \in (0, \infty]$ be the maximal existence time for $L_\gamma$.
		\begin{enumerate}
			\item If $\gamma$ is initially embedded, it is embedded for all $t \in [0,T)$. If $L_{\gamma_1}, L_{\gamma_2}$ are two initial conditions with finite number of intersections, then the number of intersections of $\gamma_1$ and $\gamma_2$ is a decreasing function in $t$. Similarly, the number of intersections of $\gamma$ with any cone $C^a_b$ is also a decreasing function in $t$. 
			
			\item If $L_\gamma$ has a finite-time singularity, then it must occur at the origin. 
			
			\item The type I blow-up of any singularity is the cone $C^0_{\pi/2}$.
			
			\item The type II blow-up is a Lawlor neck asymptotic to the type I blow-up. The blow-up is independent of rescaling sequence.
			
			\item Any sequence of connected components as in the statement of Theorem \ref{Theorem-B-CP2} converges to a multiplicity 1 copy of the cone $C^0_{\pi/2}$.
		\end{enumerate}
	\end{prop}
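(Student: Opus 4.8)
The plan is to import the flat-space singularity analysis of Neves (\cite{Neves2007}, \cite{Neves2010}) and Wood (\cite{Wood2019}, \cite{wood_2020}) into $\bCP^2$, using three ingredients already in hand: the lift to $S^5 \subset \bC^3$ via Castro--Lerma \cite{castro_lerma_miquel_2018}, the non-existence of type I singularities for monotone flows (Theorem \ref{Monotone-typeI}), and the $\bCP^2$ analogue of Neves' Theorem B (Lemma \ref{Theorem-B-CP2}). The governing heuristic is that singularity formation is local and any blow-up rescales away the ambient curvature, so near a singular point the monotone flow in $\bCP^2$ behaves like a zero-Maslov equivariant flow in $\bC^2$; the purpose of the symmetry is to cut the admissible local models down to a single one.

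I would first establish (1) by passing to the profile curve. By (\ref{eqn-equiv-mcf}) the flow of $L_\gamma$ is equivalent to a quasilinear parabolic flow of the planar curve $\gamma$ whose principal part is curve shortening. Straight lines through the origin are static solutions of this reduced equation, since they are minimal (Section \ref{sec-minimal}), so each cone $C^a_b$ is stationary. Preservation of embeddedness then follows from the avoidance principle for the reduced planar flow, while the monotonicity of the number of intersections of $\gamma$ with a second solution $\gamma'$, or with a static cone $C^a_b$, follows from the Sturmian theory of Angenent for the zero set of the linear parabolic equation satisfied by the relevant difference. The only delicate point is the degeneration of the reduced equation at the origin, where the $S^1$-fibre collapses; one runs the Sturmian argument on a punctured neighbourhood and controls the origin separately.

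For (2) and (3), Theorem \ref{Monotone-typeI} rules out type I singularities, so every finite-time singularity is type II. Away from the origin the torus is embedded with non-degenerate fibration and bounded ambient geometry, so the reduced flow has locally bounded curvature there; a singularity can therefore form only where the fibre degenerates, namely at the origin, which gives (2). For the local model, the parabolic (``type I'') rescaling at the origin has bounded area ratios, so Lemma \ref{Theorem-B-CP2} applies and every sequence of connected components converges to a special Lagrangian cone in $\bC^2$. The $(S^1\times\bZ_2)$-symmetry forces such a cone to be a union of two lines through the origin, i.e. of the form $C^0_a$, and the monotone normalisation pins the opening angle to $a=\pi/2$: this is exactly the critical angle at which the Maslov contribution $\tilde\mu$ of the tangent triangle vanishes (cf. Example \ref{ex-triangle}, where $\tilde\mu(P)$ changes sign at $\psi=\pi/2$), identifying $C^0_{\pi/2}$ as the special Lagrangian model and proving (3).

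Finally, for (4) and (5) I would run the type II rescaling, obtaining an eternal solution in $\bC^2$; being exact and almost-calibrated, it is static, hence a complete minimal equivariant Lagrangian asymptotic to $C^0_{\pi/2}$, and the unique such object is the Lawlor neck. Its rigidity together with Wood's uniqueness-of-blow-up argument \cite{wood_2020} yields independence of the rescaling sequence, and multiplicity one in (5) follows from the embeddedness in (1) and the intersection-counting estimates, which forbid folding in the limit. \emph{The main obstacle} is the import of Neves' Theorem B into the lifted higher-dimensional setting: the proof of \cite[Lemma 5.2]{Neves2010} fails in $\bC^3$ because the Michael--Simon Sobolev inequality cannot be applied directly, and one must instead run it in $\bCP^2$ via positive-curvature Sobolev inequalities and then show the Hopf fibre is non-collapsing at the singular time in order to lift the estimate to $\bC^3$ up to a constant. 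The uniqueness and multiplicity-one statements in (4)--(5), which rest on Wood's entropy and area-ratio bounds, are the deepest analytic input and are where the bulk of the technical work resides.
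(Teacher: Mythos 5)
Your overall strategy matches the paper's: part (1) via Angenent's Sturmian theory with the cones as static barriers, and parts (2)--(5) by importing the Neves/Wood analysis through Lemma \ref{Theorem-B-CP2}. However, there are genuine gaps in (2) and (3), and a weaker one in (5). For (2), your claim that ``away from the origin the reduced flow has locally bounded curvature'' is unjustified: the profile curve satisfies a uniformly parabolic quasilinear equation away from the origin, but uniform parabolicity does not prevent curvature blow-up (curve shortening flow itself is uniformly parabolic and forms singularities). The paper's actual argument is structural: at a hypothetical singular point away from the origin, the type II blow-up sends the origin to infinity, so the $S^1$-symmetry becomes translational and the blow-up splits as $\tau\times\bR$; the normalisation $|A|=1$ at the base point forces $\tau$ to have non-zero geodesic curvature, hence the blow-up is non-minimal, contradicting the constant Lagrangian angle forced by Theorem B.

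For (3), the $(S^1\times\bZ_2)$-symmetry does \emph{not} reduce the candidate special Lagrangian cones to the family $C^0_a$ with the angle then fixed by monotonicity. The special Lagrangian condition itself (equal Lagrangian angle on both planes) already pins the opening angle to $\pi/2$, and the $\bZ_2$-reflection across the real axis leaves \emph{two} candidates: $C^0_{\pi/2}=l_{-\pi/4}\cup l_{\pi/4}$ and $C^{\pi/4}_{\pi/2}=l_0\cup l_{\pi/2}$, both reflection-symmetric. Eliminating $C^{\pi/4}_{\pi/2}$ is the hard part of the proof: the paper shows a component converging to $C^{\pi/4}_{\pi/2}$ cannot meet the real or imaginary axis (else the angle cannot converge), is therefore trapped in a quadrant, contributes multiplicity one by a density argument via Huisken's monotonicity, so together with its mirror gives a multiplicity-two cone, which is then excluded by the Scale Lemma. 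Your Maslov sign-change heuristic from Example \ref{ex-triangle} does not engage with this dichotomy at all. Similarly for (5), ``intersection counting forbids folding'' is not an argument; the paper's proof is a direct area contradiction: a multiplicity-two limit forces the component to bound a disc inside a small ball, whose symplectic area is far below the value $\pi/3$ required by monotonicity.
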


	\begin{proof}
		See \cite{Evans_2022} for more detail on the following.
		\begin{enumerate}
			\item All 3 statements may be proven by variations on the same argument, which dates back to Angenent \cite{angenent_1991} applying a classical result of Sturm \cite{sturm_1836} on the zeroes of a uniformly parabolic PDE (see \cite[Proposition 1.2]{angenent_1991}). 
			
			\item Suppose $L_\gamma$ has a finite-time singularity away from the origin. Taking a type II blow-up at that point, we see that the origin is blown away to infinity, hence the $S^1$ symmetry becomes a translational symmetry. Therefore the type II blow-up is an eternal flow that splits as $\tau \times \bR$ for some curve $\tau$.
			
			Since the singularity must be type II, the second fundamental form $A$ of the type II blow-up has $|A| = 1$ at the space-time point $(0,0)$ by definition. So the geodesic curvature of $\tau$ is non-zero, and therefore the mean curvature of the type II blow-up is non-zero. However, application of Theorem B implies that we obtain a blow-up with constant Lagrangian angle, and hence the blow-up is minimal, a contradiction.
			
			\item By Theorem B, any connected component converges to a special Lagrangian cone. The $\bZ_2$ symmetry restricts the possible blow-ups to $C^0_{\pi/2}$ and $C^{\pi/4}_{\pi/2}$, with angles 0 or $\pi$, and $\pm \pi/2$ respectively.
			
			We then have to eliminate the possibility of $C^{\pi/4}_{\pi/2}$ occurring. We present the basic idea here, referring the reader to \cite{Evans_2022} for the complicated details.
			
			If a connected component $\sigma^i$ converging to  $C^{\pi/4}_{\pi/2}$ intersects the real (or equivalently imaginary axis), then $\sigma^i$ is $\bZ_2$ reflected across the real axis. But then the angle on one side of the axis is strictly determined by the angle on the other side, and it is relatively straightforward to show that such a connected component cannot converge in angle, violating Theorem B. So any connected component cannot intersect either the real or imaginary axis.
			
			Now that $\sigma^i$ is trapped in, for example, the positive quadrant, we aim to show that $\sigma^i$ converges to a Lawlor neck-type singularity. To do so, we first have to show no higher multiplicity can arise from a single connected component $\sigma^i$. 
			
			A long but somewhat standard density argument using Huisken's monotonicity formula yields a bound on the density on any annulus: Let $\sigma^i_k$ be any connected component of $\sigma^i \cap A(R,2R)$. Then we can show that
			\begin{equation}\label{eqn-density-2} \lim_{i\to \infty} \frac{ \mathcal H^1(\sigma^i_k \cap A(R,2R))}{R} = 2.\end{equation}
			Now the topology of the situation implies that a higher multiplicity cannot arise from a single connected component $\sigma^i$. 
			
			Hence the component $\sigma^i$ converges to a single density copy of $C^{\pi/4}_{\pi/2}$. But its mirror $\bar \sigma^i$ has the same convergence, so we have a double density copy of $C^{\pi/4}_{\pi/2}$. In this case however, we can guarantee that each component intersects the real axis and the imaginary axis within some small ball about the origin. This is the content of the Scale Lemma, which we prove immediately after this proof since it is important in the rest of the paper. 
			
			\item The proof is similar to that found in Wood \cite{wood_2020}.
			
			\item Let $\Sigma_j$ be a sequence of connected components converging to a multiplicity 2 copy of the cone $C^0_{\pi/2}$. Since $\Sigma_j$ are connected within a ball $B_R$ of small radius $R$ for sufficiently large $j$, and since they converge to a multiplicity 2 copy of the cone $C^0_{\pi/2}$, then within $B_R$, either $\Sigma_j$ intersects the positive real axis greater than once, or $\Sigma_j$ intersects both the real axis and the imaginary axis. Since they are connected within $B_R$ and are equivariant, we must have that they bound a disc $D \subset B_R$. But $D \subset B_R$ implies $\kappa \int_D \omega < \kappa \int_{B_R} \omega << 2\pi $, so $L$ is not monotone before the singular time, a contradiction.
		\end{enumerate}
	\end{proof}
	
	We conclude this section with an essentially important lemma, which enables us to define surgery at singularities. The same proof but with the cone $C^{\pi/4}_{\pi/2}$ completes the proof of the third part of Proposition \ref{Wood-lemmas}.
	
	\begin{lem}[Scale lemma]\label{lem-scale}
		Let $L_\gamma$ be a monotone equivariant Lagrangian mean curvature flow in $\bCP^2$ with a finite-time singularity at the origin at time $T < \infty$. Suppose the type I blow-up is the cone $C^0_{\pi/2}$ and the connected component converging to $C^0_{\pi/2}$ intersects the real axis.
		
		Then for any $R>0$, $\varepsilon_0>0$, $\delta>0$, there exists an $\varepsilon$ with $0<\varepsilon < \varepsilon_0$ and a time $t'$ with $T-\delta< t'<T$ such that $L_\gamma \cap B_R$ intersects $C^0_{\pi/2+2\varepsilon}$ at the time $t'$, where $B_R := B_R(0)$ is a ball of (Euclidean) radius $R$ at the origin.
	\end{lem}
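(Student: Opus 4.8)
The plan is to argue by contradiction, introducing a deep, narrow minimal equivariant Lagrangian from Section \ref{sec-minimal} as a \emph{static} barrier and exploiting the monotonicity of intersection numbers in part (1) of Proposition \ref{Wood-lemmas}. Suppose the lemma fails, so there are $R,\varepsilon_0,\delta>0$ such that for every $\varepsilon\in(0,\varepsilon_0)$ and every $t\in(T-\delta,T)$ the curve $\gamma(\cdot,t)\cap B_R$ misses the line $l_{\pi/4+\varepsilon}$ (and, by the $\bZ_2$-symmetry, $l_{-\pi/4-\varepsilon}$). Since the connected component forming the neck crosses the real axis, the intermediate value theorem forces this component to have maximal argument at most $\pi/4$ throughout $(T-\delta,T)$; in other words the flowing neck stays inside the closed cone $C^0_{\pi/2}$ within $B_R$ right up to the singular time.

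Next I would set up the barrier. Take a graphical minimal solution $\gamma_C(s)=r(s)e^{is}$ of (\ref{eqn-minimal-graph}) with minimum radius $r_1=r_1(C)$ on the real axis. By (\ref{eqn-psi-lower}) its inner period satisfies $\psi_C^->\pi/2$, so $\gamma_C$ crosses the cone line $l_{\pi/4}$ at the radius $R_C=r(\pi/4)$ and then pokes \emph{strictly outside} $C^0_{\pi/2}$, reaching argument $\psi_C^-/2>\pi/4$. This overshoot is the crucial geometric input: it is a genuinely positive-curvature phenomenon, absent for the flat Lawlor neck, and it is exactly what distinguishes $\gamma_C$ topologically from the flowing neck. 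Since $\gamma_C$ is minimal in $\bCP^2$ it is a static mean curvature flow, and by Proposition \ref{prop-cone-radius} I may take $C$ so large that $R_C<R$; thus inside $B_R$ the curve $\gamma_C$ consists (near the positive real axis, together with its point-symmetric copy) of an arc entering $\partial B_R$ at argument exceeding $\pi/4$, crossing $l_{\pi/4}$ at radius $R_C$, dipping to $r_1$ on the real axis, and exiting symmetrically. Fixing a time $t_0\in(T-\delta,T)$ and using $R_C\to0$, I would further enlarge $C$ so that $R_C$ is smaller than the real-axis crossing radius $\rho(t_0)$ of the flowing neck.

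The contradiction then comes from counting intersections of $\gamma(\cdot,t)$ with the static arc $\gamma_C$ inside $B_R$, which by part (1) of Proposition \ref{Wood-lemmas} must be non-increasing in $t$. At time $t_0$ the flowing neck lies entirely outside the tiny minimal neck, since its radius is bounded below by $\rho(t_0)>R_C\ge r(s)$ for all relevant $s$, so the two arcs are disjoint near the real axis. As $t\to T$, however, part (2) of Proposition \ref{Wood-lemmas} guarantees the singularity forms at the origin, so $\rho(t)\to0$ and the flowing neck collapses below the minimal neck; because $\gamma(\cdot,t)$ stays inside $C^0_{\pi/2}$ while $\gamma_C$ pokes out past $l_{\pi/4}$, comparing radii at argument $0$ (where $\gamma$ is now interior to $\gamma_C$) and at argument near $\pi/4$ (where $\gamma$ reaches radius close to $R>R_C$, hence exterior) yields, via the intermediate value theorem, at least two transverse intersections. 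This strict increase contradicts monotonicity, so the flowing neck cannot remain inside $C^0_{\pi/2}$, proving the lemma. The same barrier argument applies verbatim to the cone $C^{\pi/4}_{\pi/2}$ using the correspondingly rotated minimal solutions, which is what is needed to finish part (3) of Proposition \ref{Wood-lemmas}.

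I expect the main obstacle to be the intersection bookkeeping. To deploy part (1) of Proposition \ref{Wood-lemmas} rigorously one must count \emph{all} intersections of the two full (point-symmetric) curves inside $B_R$ and verify that the pair of intersections created at the collapsing neck are not cancelled by intersections destroyed elsewhere, in particular out near the common asymptotic cone directions where both curves approach $\partial B_R$. Handling this requires using the $S^1\times\bZ_2$-symmetry to localise the count to a single angular sector, choosing $R$ small and $C$ large enough that only the near-origin arc of $\gamma_C$ meets $B_R$, and controlling the boundary contributions on $\partial B_R$ in the Sturm--Angenent argument. These are precisely the technical points one would carry out in full in \cite{Evans_2022}, the geometric heart of the matter being the overshoot $\psi_C^->\pi/2$ recorded in (\ref{eqn-psi-lower}).
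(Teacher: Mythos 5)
Your overall strategy is the paper's: argue by contradiction, deploy the minimal spirograph curves of Section \ref{sec-minimal} as static barriers with $R_C\to 0$ via Proposition \ref{prop-cone-radius}, and play the collapse of the real-axis crossing against the Sturmian monotonicity of intersection numbers. But there is a genuine gap at the decisive step, and you have put the emphasis on the wrong period estimate. Your contradiction is ``the local count of intersections near the real axis inside $B_R$ jumps from $0$ to at least $2$, contradicting monotonicity.'' Sturm--Angenent controls only the \emph{global} count of intersections of $\gamma$ with the complete immersed curve $\gamma_C$; a local increase is perfectly consistent with it, because intersection points, while never created, can migrate into $B_R$ from outside while others are destroyed elsewhere. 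You flag this yourself, but your proposed repair --- choose $R$ small and $C$ large so that ``only the near-origin arc of $\gamma_C$ meets $B_R$'' --- is false: every petal of the spirograph dips to the same minimum radius $r_1<R/2$ near the origin, so many arcs of $\gamma_C$, pointing in the directions $k\psi_C \bmod 2\pi$, enter $B_R$ no matter how small $R$ is.

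The ingredient that closes the argument in the paper is the \emph{upper} bound on the inner period from Lemma \ref{lem-period}, namely that for $C$ large one can arrange $\psi_C^-<\pi/2+2\varepsilon_0$ (together with $r_C(s)<R/2$ for $|s|\le\pi/4$ and $r_2>Q$ so nothing escapes through the outer endpoint of the half-petal). You instead invoke only the lower bound $\psi_C^->\pi/2$ from (\ref{eqn-psi-lower}) and call the overshoot ``the crucial geometric input''; it is not --- that $\gamma_C$ crosses $l_{\pi/4}$ at the small radius $R_C$ is already Proposition \ref{prop-cone-radius}. What the upper bound buys is that the entire portion of the real-axis petal of $\gamma_C$ at radii between $R_C$ and $R$ sits at arguments in $\pm(\pi/4,\pi/4+\varepsilon_0)$, i.e.\ exactly in the angular band forbidden to $\gamma\cap B_R$ by the contradiction hypothesis. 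Since intersection points cannot be created and therefore trace continuous paths back to time $T-\delta$ (when $\gamma\cap B_R=\emptyset$), any intersection that ends up on the inner arc of that petal must at some intermediate time sit in this ``gate,'' i.e.\ be a point of $\gamma\cap B_R$ on some $C^0_{\pi/2+2\varepsilon}$ with $0<\varepsilon<\varepsilon_0$ --- a direct contradiction with the hypothesis, with no cancellation bookkeeping needed. Without this gate your argument does not close.
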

	
	In essence, the scale lemma guarantees that singularity formation happens on an arbitrarily small scale. When we do surgery, this allows us to reduce the number of intersections with the cone $C^0_{\pi/2}$, thus controlling the total number of surgeries any flow can undergo. Currently, no such result exists for Lagrangian mean curvature flow in Euclidean space. Indeed, the proof we give relies heavily upon barriers that cannot exist in Euclidean space.
	
	\begin{proof}
		\begin{figure}
			\centering
			\includegraphics[scale=2]{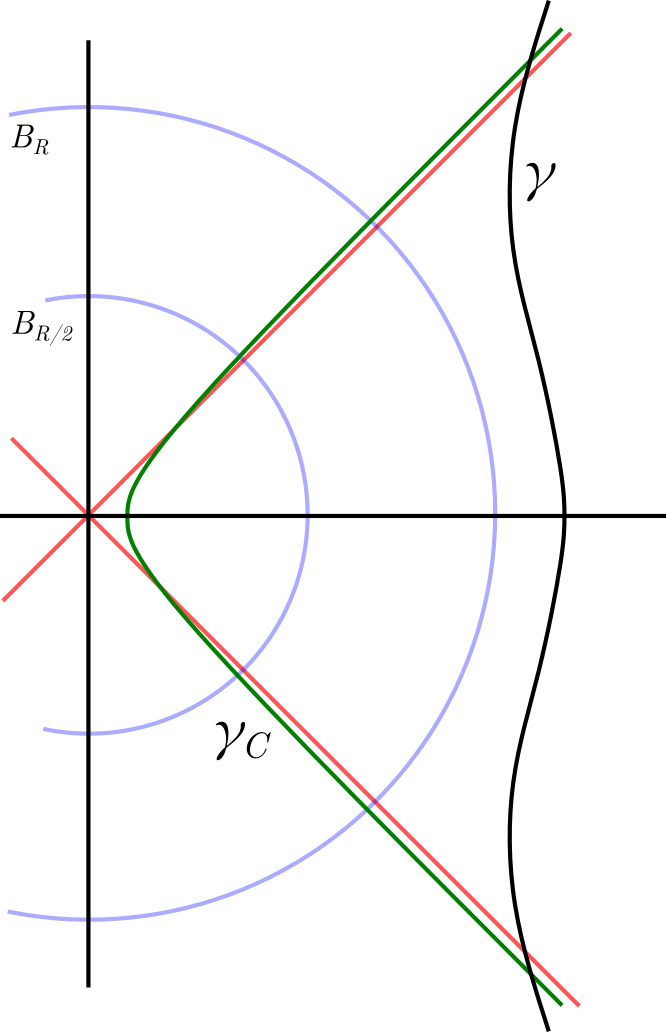}
			\caption{In order to form a singularity, $\gamma$ must eventually intersect $\gamma_C$ within $B_{R/2}$. But to do so, it must first intersect $\gamma_C$ within the annulus $A(R/2,R)$, and hence intersects a cone $C^0_{\pi/2 +2 \varepsilon}$ for $\varepsilon>0$.}\label{fig-scale-lemma}
		\end{figure}
		
		Suppose not. Then there exists $R>0$, $\varepsilon_0>0$, $\delta>0$ such that for any $\varepsilon$ with $0<\varepsilon < \varepsilon_0$ and $t'$ with $T- \delta < t'<T$, $L_\gamma \cap B_R$ does not intersect $C^0_{\pi/2 +2 \varepsilon}$. Since $T$ is the first singular time, we can, by taking a smaller $R>0$ if necessary, have that $L_\gamma \cap B_R$ is empty at time $T-\delta$. We may also freely assume $R<1$.
		
		Note that since $\gamma$ has a finite-time singularity at the origin, we have that there exists some $Q>0$ such that $\max_{p \in \gamma} r(p) < Q$ for all time $t \in [0,T)$, where $r(p)$ is the Euclidean distance to the origin.
		
		Consider the complete immersed minimal equivariant surfaces $L_{\gamma_C}$ constructed in Theorem \ref{thm-minimal-immersed-lags}, where $\gamma_C(s) = r_C(s)e^{is}$ has initial values $r(0) = r_1, r'(0) =0$. Since $L_{\gamma_C}$ is complete, $\gamma_C$ intersects $\gamma$ a finite number of times, non-increasing under the flow. By choosing $C>27$ sufficiently large, we can find a curve $\gamma_C$ such that 
		\begin{enumerate}
			\item $\gamma_C$ has maximum $r_2 > Q$.
			\item $r_C(s) < R/2$ for all $s \in [-\pi/4,\pi/4]$, see Proposition \ref{prop-cone-radius}. 
			\item The inner period $\psi^-_C$ (see Lemma \ref{lem-period}) satisfies $\psi^-_C < \pi/2 + 2 \varepsilon_0$.
		\end{enumerate}
		Consider the set
		\[\mathcal S = \left\{p \in \gamma \, : \, p = \gamma_C(s) \text{ for } s \in [0,\psi_C/2]\right\} ,\]
		where $\psi_C$ is the period of $\gamma_C$. Since $\gamma$ is monotone and both $\gamma$ and $\gamma_C$ are complete, this set is non-empty and finite for all time. Furthermore, if $p \in \mathcal S$, then $r(p) \in [R,Q]$ for all time by assumption, since otherwise we would have found a point $p \in L_\gamma \cap B_R$ intersecting $C^0_{\pi/2 +2 \varepsilon}$ for some $\varepsilon$ with $0<\varepsilon < \varepsilon_0$. Since the number of intersections between $\gamma$ and $\gamma_C$ is non-increasing, $L_\gamma \cap L_{\gamma_C} \cap B_R$ is empty for all time. 
		
		However, since the connected component giving $C^0_{\pi/2}$ in the blow-up contains the real axis, we have that the intersection with the real axis converges to 0 since otherwise the blow-up would have to contain a line $C^0_a$ with $a< \pi/4$. In particular, there must exist some time $\tilde t$ with $T-\delta < \tilde t < T$ when $L_\gamma \cap L_{\gamma_C }\cap B_R$ is non-empty, a contradiction. 
	\end{proof}
	
	\begin{rem}\label{rem-lawlor-vs-cone}
		The final step of the above proof can be simplified using the assumption that the Lawlor neck is the type II blow-up. 
	\end{rem}

	\subsection{Clifford tori}\label{sec-graph-Cliff}
	
	A natural condition to impose on solutions of the equivariant flow (\ref{eqn-equiv-mcf}) is that $\gamma$ is graphical over the minimal equivariant Clifford torus $L_1$. We have already studied minimal solutions of (\ref{eqn-equiv-mcf}): the only embedded minimal solution is $L_1$. Thus, by Proposition \ref{Prop-infinite-sing}, if we can prove that $\gamma$ has long-time existence, then we obtain convergence to $L_1$ in infinite time. 
	
	\begin{rem}	 	
		Recall the fibration $\{L_\alpha\}$ by Clifford-type tori given by the moment map 
		\[\mu([x:y:z]) = \frac{1}{|x|^2 +|y|^2 +|z|^2}\left( |x|^2, |y|^2\right).\]
		The equivariant fibres are
		\[L_r = \{L_{re^{i\phi}}:r >0\}\]
		and from here on we denote by $\Omega$ the holomorphic volume form relative to $L_r$. Then for a Lagrangian $L$, we defined the Lagrangian angle $\theta$ relative to $\Omega$ by
		\[\Omega_L = e^{i\theta} \operatorname{vol}_L.\]
		Recall that in the Calabi--Yau case, if $\theta$ can be chosen to be a real-valued function, we call $L$ zero-Maslov with respect to $\Omega$. Furthermore, we call $L$ almost-calibrated with respect to $\Omega$ if there exists some $\delta >0$ such that
		\[ \cos \theta  > \delta>0.\] 
		Note that $\gamma$ is graphical over $L_1$ if and only if $\gamma$ is almost-calibrated with respect to $\Omega$.
		
		However, unlike in the Calabi--Yau case, $\theta$ defined in this way satisfies the evolution equation
		\[\frac{\partial}{\partial t} \theta = \Delta \theta + d^\dagger \alpha,\]
		and hence the parabolic maximum principle does not imply that $\cos \theta$ is increasing in time. Indeed, consider the following setup: let $\gamma$ be a small ellipse with eccentricity $0<e<1$ centred on the origin. Then $\gamma$ has a finite-time singularity at the origin. Furthermore, if $e$ is sufficiently close to 1, the singularity is type II and has type II blow-up a Lawlor neck. In particular, there is no constant $\delta>0$ such that $\cos \theta > \delta$ for all time. So almost-calibrated is not preserved in general. 
		
		Furthermore, even in the case where $L_\gamma$ is monotone, we should not expect almost-calibrated to be preserved locally. Indeed, we construct an example later in the paper where a non-graphical Clifford torus forms a finite-time singularity. However the construction seems to indicate that the almost-calibrated condition breaks locally in this case.
		
		These examples illustrates two ideas. Firstly, we should consider $\gamma$ as graphical rather than almost-calibrated. Secondly, we should only expect graphical to be preserved in the case that $\gamma$ gives a monotone torus $L_\gamma$. This concludes the remark.
	\end{rem}
	
		\begin{prop}\label{Prop-AC-Cliff}
		Let $L_\gamma$ be a monotone equivariant Clifford torus, graphical over the minimal equivariant Clifford torus $L_1$. Then under mean curvature flow, $L_\gamma$ exists for all time and converges to $L_1$ in infinite time.
	\end{prop}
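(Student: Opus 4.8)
The plan is to reduce the proposition to a question of long-time existence. By Proposition \ref{Prop-infinite-sing}, a monotone Lagrangian mean curvature flow in $\bCP^2$ either develops a finite-time singularity or exists for all time with uniformly bounded $|A|^2$ and converges subsequentially to a minimal Lagrangian. Since the only embedded minimal equivariant solution of (\ref{eqn-equiv-mcf}) is $L_1$, every such subsequential limit must be $L_1$, so the full flow converges to $L_1$ as soon as long-time existence is established. It therefore suffices to rule out a finite-time singularity. By Proposition \ref{Wood-lemmas}, any finite-time singularity of an equivariant monotone Clifford torus occurs at the origin, and in particular forces $\min_\gamma r\to 0$. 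Hence the whole argument comes down to two points: that $L_\gamma$ stays graphical over $L_1$, and that the radius function $r$ stays bounded away from $0$ (and from $\infty$) for all time.

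First I would show that graphicality is preserved, phrasing it as an intersection count to sidestep the failure of the maximum principle noted in the preceding remark. A point-symmetric closed curve $\gamma$ enclosing the origin is a polar graph $r(\phi)e^{i\phi}$ over $L_1$ if and only if each ray from the origin meets $\gamma$ exactly once, equivalently each line $l_b$ through the origin meets $\gamma$ in exactly two points. Enclosing the origin forces every ray to meet $\gamma$ an odd number $\ge 1$ of times, hence every line at least twice; on the other hand each $l_b$ spans a totally geodesic $\bRP^2$ which is a static minimal solution, so by the Sturm-type monotonicity of Proposition \ref{Wood-lemmas}(1) the number of intersections of $\gamma$ with $l_b$ is non-increasing. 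Starting from $2$ and unable to drop below $2$, this count stays exactly $2$, so $\gamma$ remains graphical. There is no circularity: on the maximal interval of smooth existence $\gamma$ cannot pass through the origin, as that would be precisely the excluded origin singularity, so the winding number about the origin, and with it the enclosing condition, persists.

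Next I would build circle barriers. By the $S^1$-symmetry the flow preserves the round circles $L_r$, whose radius evolves by the ODE extracted from (\ref{eqn-equiv-mcf}); the sign of $H_{L_r}=-2\frac{r^2-1}{1+2r^2}\,ds$ from (\ref{eqn-HLr}) shows that $r=1$ is an attracting fixed point, circles of radius $<1$ expanding toward $L_1$ and those of radius $>1$ contracting toward it. Choosing $r_1(0)<\min_\phi r(\phi,0)$ and $r_2(0)>\max_\phi r(\phi,0)$, the avoidance principle for disjoint solutions of (\ref{eqn-equiv-mcf}) keeps $\gamma$ trapped between the evolving circles $L_{r_1(t)}$ and $L_{r_2(t)}$. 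Since $r_1(t)\to 1^{-}$ and $r_2(t)\to 1^{+}$, this simultaneously bounds $r$ away from $0$ and from $\infty$ and forces $r(\cdot,t)\to 1$ uniformly.

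Combining these gives $\min_\gamma r\ge r_1(0)>0$ for all time, so no singularity can form at the origin, and hence by Proposition \ref{Wood-lemmas} the flow is smooth for all time. Proposition \ref{Prop-infinite-sing} then yields uniformly bounded $|A|^2$ and subsequential convergence to a minimal Lagrangian, necessarily $L_1$; the uniform trapping $r\to 1$ upgrades this to full $C^0$ convergence, and interior parabolic estimates for the uniformly parabolic graph equation on the fixed annulus $\{r_1(0)\le r\le r_2(0)\}$ promote it to smooth convergence. The main obstacle is exactly the preservation of graphicality: as the remark emphasises, the relative Lagrangian angle obeys $\partial_t\theta=\Delta\theta+d^\dagger\alpha$ rather than a pure heat equation, so almost-calibratedness cannot be propagated by a maximum principle. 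Recasting graphicality as a line-intersection count and invoking Proposition \ref{Wood-lemmas}(1) is what resolves this; the remaining ingredients—the avoidance principle and the stability of the circle ODE—are routine.
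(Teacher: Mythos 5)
Your reduction to long-time existence and your Sturm-type argument that graphicality is preserved (counting intersections with the static minimal lines $l_b$, which cannot drop below $2$ while $\gamma$ encloses the origin) are both sound and consistent with what the paper does. The fatal problem is the circle-barrier step. The fixed point $r=1$ of the circle ODE is \emph{repelling}, not attracting: by (\ref{KE-area-ev}) the Maslov-$4$ disc bounded by $L_r$ has area $\pi\tfrac{2r^2}{1+2r^2}$, which is $<2\pi/3$ for $r<1$, so $\tfrac{d}{dt}\int_D\omega = 6\int_D\omega - 4\pi < 0$ and the circle shrinks to the origin in finite time (equivalently, $H_{L_r}(\partial_s)>0$ for $r<1$ and $J\partial_s$ points radially \emph{inward}, so $\vec H$ points toward the origin). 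Hence your inner barrier $L_{r_1(t)}$ with $r_1(0)<\min_\phi r(\phi,0)\le 1$ collapses in finite time and provides no lower bound on $\min_\gamma r$ thereafter; the outer barrier likewise runs away from $L_1$. A symptom of the gap: apart from preserving graphicality, your argument never uses monotonicity of $L_\gamma$, so if it worked it would give long-time existence for \emph{every} graphical equivariant Clifford-type torus — but the round circles of radius $r<1$ are themselves graphical and collapse in finite time.

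The paper closes this gap with a quantitative monotonicity argument rather than barriers. Assuming a finite-time singularity (necessarily at the origin with type I blow-up $C^0_{\pi/2}$, by Proposition \ref{Wood-lemmas}), the cone $C^0_{\pi/2-2\varepsilon}$ meets the graphical curve in exactly four points and cuts the Maslov-$4$ disc into four triangles whose total area is pinned at $2\pi/3$ by monotonicity. The two triangles through the real axis collapse as $t\to T$ because the singularity forms there, while the two triangles around the imaginary axis are confined to sectors of total area at most $\pi/2+2\varepsilon$. Choosing $\varepsilon$ small, the total area near the singular time is forced below $2\pi/3$, contradicting monotonicity. If you want to repair your write-up, this area comparison (or the triangle evolution inequality of Lemma \ref{lem-tangential-triangle}) is the ingredient you need in place of the circle barriers; your graphicality-preservation step can be kept as is.
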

	
	\begin{proof}
		As mentioned above, it suffices to show that we have long-time existence. 	First, note that the graphical condition is preserved up to any potential singular time since the number of intersections of $\gamma$ with any cone $C^a_b$ is decreasing in time by Proposition \ref{Wood-lemmas}. 
		
		Suppose for a contradiction that we have a finite time singularity at time $T$. Any finite-time singularity must occur at the origin by Proposition \ref{Wood-lemmas}, and must have blow-up given by $C^0_{\pi/2}$. 
		
		For any graphical $\gamma$ and $\varepsilon>0$, the cone $C^0_{\pi/2-2\varepsilon}$ intersects $\gamma$ 4 times, dividing the Maslov 4 disc into 4 triangles. Denote the triangles intersecting the positive and negative real axes by $P^+_\varepsilon$ and $P^-_\varepsilon$, and the triangles intersecting the positive and negative imaginary axes by $Q^+_\varepsilon$ and $Q^-_\varepsilon$. Note that since $L_\gamma$ is monotone, 
		\[\int_{P^+_\varepsilon + P^-_\varepsilon + Q^+_\varepsilon + Q^-_\varepsilon} \omega = 4\pi/6 = 2\pi/3.\]
		Suppose the type II blow-up of a connected component is a Lawlor neck intersecting the real axis (this assumption is reasonable and simplifies the proof, but can be removed, see Remark \ref{rem-lawlor-vs-cone}). Then for any $\varepsilon>0$, we have that 
		\[\int_{P^+_\varepsilon} \omega \to 0\]
		as $t \to T$, where $P_\varepsilon$ is the $J$-holomorphic triangle bounded by $\gamma$ and $C^0_{\pi/2-2\varepsilon}$, intersecting the positive real axis and with a vertex at 0. But the total area contained outside the cone is bounded above by $\pi/2 + 2\varepsilon$, so 
		\[\int_{Q^+_\varepsilon + Q^-_\varepsilon} \omega < \pi/2 + 2\varepsilon.\]
		Let $\varepsilon = \pi/48$. We can find a time $t$ close to $T$ such that
		\[\int_{P^+_\varepsilon + P^-_\varepsilon} \omega < \pi/24.\]
		Then at $t$,
		\[\int_{P^+_\varepsilon + P^-_\varepsilon + Q^+_\varepsilon + Q^-_\varepsilon} \omega < \pi/2 + \pi/12 =7\pi/12 < 2\pi/3,\]
		which contradicts $L_\gamma$ being monotone.				
	\end{proof}
	
	\begin{rem}
		As in Remark \ref{rem-lawlor-vs-cone}, the assumption that the type II blow-up is a Lawlor neck simplifies the proof, but is not necessary.
	\end{rem}
	
	\subsection{Chekanov tori}\label{sec-Chek}

	In the following, we will analyse the behaviour of equivariant Chekanov tori under mean curvature flow. Note first of all that any equivariant Chekanov torus does not intersect either the imaginary or real axis. Without loss of generality assume the former. Then there exists a cone $C^0_\psi$ of maximal opening angle $\psi$ such that $C^0_\psi \cap L_\gamma$ is non-empty. Since $L_\gamma$ is monotone and the area inside the cone $C^0_\psi$ is $\psi/2$, we must have that $\psi > 2\pi/3$.
	
	We begin by proving there are no minimal equivariant Chekanov tori. This is interesting in its own right, and the method of proof suggests it may generalise to the non-equivariant case.
	
	\begin{prop}\label{prop-no-min-Chek}
		There is no minimal equivariant Chekanov torus.
	\end{prop}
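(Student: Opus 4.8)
The plan is to leverage the triangle-calculation machinery of Section \ref{sec-triangle}, specifically the evolution inequality of Lemma \ref{lem-tangential-triangle}, against the fact that a minimal Lagrangian is a (trivial) \emph{stationary} solution of the flow. The key structural observation is the one recorded just before the statement: an equivariant Chekanov torus misses one of the coordinate axes, say the imaginary axis, and so admits a maximal cone $C^0_\psi$ meeting $\gamma$ with $\psi > 2\pi/3$ forced by monotonicity (the area inside $C^0_\psi$ being $\psi/2$). The strategy is to show that this maximal opening angle $\psi$ \emph{cannot} be stationary, contradicting minimality.

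First I would set up the triangle $P$ cut out by $\gamma$ and the cone $C^0_\psi$ at the point of maximal opening angle, with a vertex at the origin, exactly as in Example \ref{ex-triangle}. At a maximal opening angle the intersection is tangential, so $\gamma$ is tangent to the cone at the vertices $p^\pm$, i.e. the turning angle is $\xi = \pi$. By \eqref{eqn-mu-triangle-origin} this gives $\tilde\mu(P) = -\tfrac{1}{\pi}(\pi - 2\psi)$, so $\pi\tilde\mu(P) = -(\pi - 2\psi) = 2\psi - \pi$, which is strictly positive precisely because $\psi > 2\pi/3 > \pi/2$. Now for a minimal $L_\gamma$ the area $\int_P \omega$ is constant in time (the surface does not move), so the left-hand side of the evolution inequality \eqref{eqn-ev} vanishes. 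But a minimal torus has $\vec H \equiv 0$, which streamlines the derivation of Lemma \ref{lem-tangential-triangle}: the boundary terms $\alpha(-\vec H)$ are identically zero, and one obtains the clean identity $\kappa \int_P \omega = \pi\tilde\mu(P) = 2\psi - \pi$ directly from the generalised Cieliebak--Goldstein formula \eqref{eqn-CG-gen} rather than from the inequality. This pins down the area of the maximal triangle in terms of $\psi$ alone.

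The crux is then a \emph{second} independent relation between $\int_P \omega$ and $\psi$ coming from the geometry, which will be incompatible with the Cieliebak--Goldstein value for a curve that closes up. The cleanest route is to run the argument of Lemma \ref{lem-tangential-triangle} with $\vec H = 0$ as a genuine equality: since $\gamma$ is stationary and $P$ sits at the maximal opening angle, the same first-variation bookkeeping forces the angle to be critical, and one compares this with the geometric inequality $\kappa\int_P\omega < \kappa\int_{C^0_\psi}\omega = \kappa\cdot\tfrac{\psi}{2}$ (the triangle area is strictly less than the full sector area since $\gamma$ does not reach the origin). Combined with $\kappa\int_P\omega = 2\psi - \pi$ and $\kappa = 6$, this yields $2\psi - \pi < 3\psi$, which is automatically satisfied and so not yet a contradiction; the real obstruction must instead come from the \emph{non-existence of a periodic closed profile curve}. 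I therefore expect the genuine argument to mirror the minimal-surface analysis of Section \ref{sec-minimal}: a minimal equivariant Chekanov profile would have to be a closed curve not enclosing the origin solving the minimal ODE \eqref{eqn-min-equiv}, and one shows via the first-integral/period analysis (as in Lemma \ref{lem-period} and Proposition \ref{prop-cone-radius}) that every bounded solution of \eqref{eqn-min-equiv} either is a line through the origin or oscillates \emph{across} the origin (between radii $r_1 < 1 < r_2$), hence must enclose the origin and cannot be of Chekanov type.

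The main obstacle, and the step requiring the most care, is this last point: ruling out a closed minimal profile curve that stays in a half-plane away from the origin. The triangle/Cieliebak--Goldstein identity $\kappa\int_P\omega = 2\psi-\pi$ handles the symplectic area, but translating ``stays away from the origin and closes up'' into a contradiction requires the ODE analysis of \eqref{eqn-min-equiv} to show that a solution with a local radial extremum off the origin cannot turn around and close without either crossing a line through the origin transversally (forcing it to spiral around) or developing the wrong convexity. I would close the argument by showing that the angle defect $2\psi - \pi > \tfrac{\pi}{3}$ accumulated at the maximal triangle, combined with the sign of the curvature integral along a would-be closed Chekanov curve, is incompatible with the total turning required for a simple closed curve not winding around the origin. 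The delicate verification that no such periodic orbit of \eqref{eqn-min-equiv} exists in a half-plane is where I expect to lean most heavily on the detailed phase-plane estimates deferred to \cite{Evans_2022}.
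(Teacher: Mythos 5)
You set up exactly the right objects --- the maximal cone $C^0_\psi$ with $2\pi/3 < \psi < \pi$, the tangential triangle $P$ with $\pi\tilde\mu(P) = 2\psi-\pi$, and the Cieliebak--Goldstein identity $\kappa\int_P\omega + \pi - 2\psi = -\int_{\gamma_0}H$ --- but you then stop one estimate short of the contradiction and, as you yourself note, the bound $\int_P\omega < \psi/2$ is too weak. The missing idea is that $P$ lies inside the sector of area $\psi/2$ \emph{and is disjoint from the Maslov $2$ disc bounded by $\gamma$}, which by monotonicity has area exactly $\pi\cdot 2/\kappa = \pi/3$ and also sits inside that sector. Hence $\int_P\omega < \psi/2 - \pi/3$, and therefore
\[
-\int_{\gamma_0}H = \kappa\int_P\omega + \pi - 2\psi < 6\Bigl(\frac{\psi}{2}-\frac{\pi}{3}\Bigr) + \pi - 2\psi = \psi - \pi < 0,
\]
since $\psi<\pi$ because $\gamma$ avoids the imaginary axis and the origin. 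So $H$ cannot vanish identically, and no minimality assumption or evolution equation is needed at all: the argument is purely a static application of the generalised Cieliebak--Goldstein formula. Your attempt to extract the contradiction from Lemma \ref{lem-tangential-triangle} conflates a dynamic inequality with what is really a pointwise-in-time identity plus an area count; the monotone constraint on the Maslov $2$ disc is the second relation you were looking for.

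Your fallback --- classifying closed orbits of the minimal ODE \eqref{eqn-min-equiv} in a half-plane --- is a legitimate alternative (the paper itself remarks the result is ``immediate by the classification'' of Section \ref{sec-minimal} before deliberately giving the triangle proof instead), but as written it is also incomplete: you defer precisely the hard step (no periodic orbit avoiding the origin) to phase-plane estimates you do not carry out, and the graph analysis of Section \ref{sec-minimal} only covers curves of the form $r(s)e^{is}$. If you want to close the argument along those lines, the quick route is the uniqueness statement already recorded there: at the point where $\gamma$ is tangent to the ray $l_{\psi/2}$ one has $\langle\gamma,\nu\rangle = 0$, so a minimal solution through that point with that velocity must coincide with the line $l_{\psi/2}$ everywhere, which is not a torus. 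As it stands, neither of your two routes reaches a contradiction, so the proof has a genuine gap.
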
	
	
	\begin{proof}
		The result is immediate by the classification of equivariant tori in Section \ref{sec-minimal}. However, we present a different proof since we believe the idea of the proof may be more widely applicable.
		
		Let $L_\gamma$ be an equivariant Chekanov torus. Without loss of generality, we are free to restrict to the subclass of Chekanov tori $L_\gamma$ for which $\gamma$ does not intersect the imaginary axis in $\bC$.
		
		Since $\gamma$ does not intersect the imaginary axis and $L_\gamma$ is equivariant, there is some maximal angle $\psi$ such that $C^0_{\psi}$ intersects $\gamma$. Since $\gamma$ does not pass through the origin and does not intersect the imaginary axis, $\psi = \pi - \delta$ for some $\delta >0$. Denote the first points of intersection of $L_\gamma$ and $C^0_{\psi}$ by $p^+$ and $p^-$, i.e. if $p \in C^0_\psi \cap L_\gamma$, then $r(p^\pm) \leq r(p)$.
		
		\begin{figure}
			\centering
			\includegraphics[scale=2]{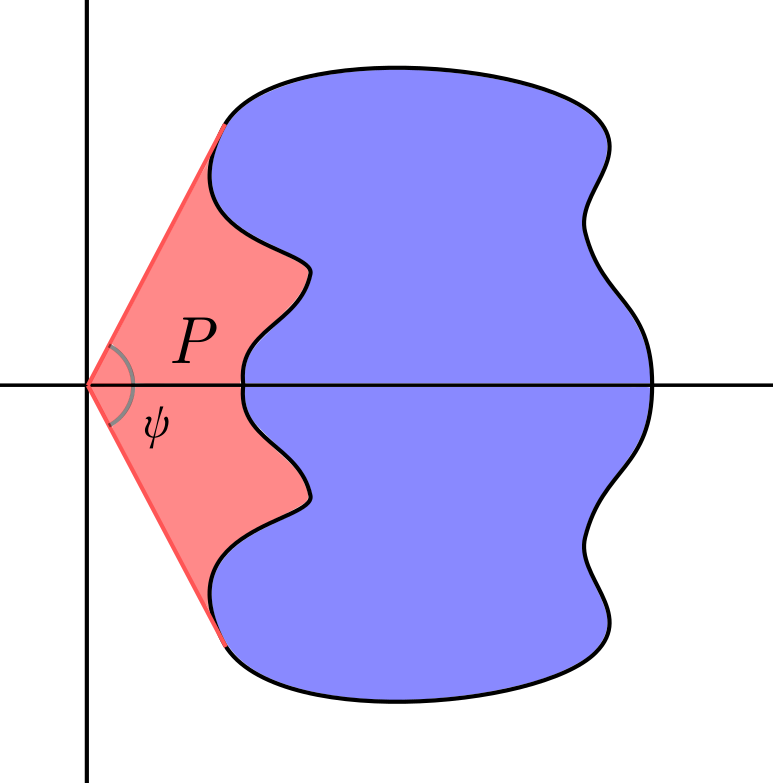}
			\caption{The triangle $P$ collapses in finite time for a Chekanov torus.}\label{fig-chek-sing}
		\end{figure}
		
		Consider the $J$-holomorphic triangle $P$ with boundary on $l_{\psi/2}, l_{-\psi/2}$ and $\gamma$ with one vertex at the origin and the other two vertices at $p^+$ and $p^-$. As in Example \ref{ex-triangle}, 
		\[\tilde \mu(P) = - \frac{1}{\pi} \left(\pi-2\psi\right),\]
		and hence by (\ref{eqn-CG-gen}) we have that
		\[-\int_{\gamma_0} H = \kappa \int_P\omega + \pi - 2\psi,\]
		where $\gamma_0$ is the arc of $\gamma$ running from $p^+$ to $p^-$. But
		\[ \int_P \omega < \frac{\psi}{2} - \frac{\pi}{3}\]
		since the area of $P$ is bounded by the difference between the total area contained in the cone and the area of the Maslov 2 disc bounded by $\gamma$. So
		\[-\int_{\gamma_0} H < 3\psi -2\pi + \pi - 2\psi = \psi -\pi =-\delta<0.\] 
		Hence $L_\gamma$ is not minimal.
	\end{proof}
	
	\begin{cor}\label{Prop-Chek-sing}
		Let $L_\gamma$ be an equivariant Chekanov torus in $\bCP^2$. Then under mean curvature flow, $L_\gamma$ has a finite-time singularity at the origin $[0:0:1]$.
	\end{cor}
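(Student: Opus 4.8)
The plan is to deduce the corollary from Proposition \ref{prop-no-min-Chek} together with the Chen--He dichotomy of Proposition \ref{Prop-infinite-sing}. Since $L_\gamma$ is a compact Lagrangian in the compact K\"ahler--Einstein manifold $\bCP^2$ (with $\kappa = 6 > 0$), Proposition \ref{Prop-infinite-sing} tells us the flow either attains a finite-time singularity or exists for all time with uniformly bounded $|A|^2$ and converges subsequentially to a minimal Lagrangian. I would argue by contradiction: assuming there is no finite-time singularity, I would show that the subsequential limit is itself a minimal equivariant Chekanov torus, contradicting Proposition \ref{prop-no-min-Chek}. Once a finite-time singularity is forced, the location at $[0:0:1]$ is immediate from Proposition \ref{Wood-lemmas}(2), which sends any finite-time singularity to the origin.

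First I would verify that the Chekanov-type structure persists along the flow. Normalising (as in the proof of Proposition \ref{prop-no-min-Chek}) so that $\gamma$ is disjoint from the imaginary axis $l_{\pi/2}$, the number of intersections of $\gamma$ with this line is non-increasing by Proposition \ref{Wood-lemmas}(1) and is initially zero, so $\gamma$ remains disjoint from the imaginary axis; monotonicity is preserved by Corollary \ref{Monotone preserved}; and embeddedness is preserved, again by Proposition \ref{Wood-lemmas}(1). Hence, on the assumed infinite existence interval, $L_\gamma$ stays an embedded monotone equivariant Chekanov torus, and in particular its maximal cone angle satisfies $\psi(t) > 2\pi/3$ throughout, as in the discussion opening this subsection.

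Next I would identify the limit. Under the no-singularity assumption the uniform bound on $|A|^2$ upgrades the subsequential convergence to smooth, single-multiplicity convergence to an embedded minimal Lagrangian $L_\infty$, which inherits the $(S^1\times\bZ_2)$-symmetry and the torus topology. Passing the preserved properties to the limit, $L_\infty$ is monotone, equivariant and disjoint from the open imaginary axis; moreover a smooth equivariant Lagrangian cannot have its profile curve pass through the origin (there the $S^1$-fibre collapses to a node, as in Vianna's construction), so $L_\infty$ does not meet the origin. Therefore $L_\infty$ is a genuine minimal equivariant Chekanov torus and Proposition \ref{prop-no-min-Chek} gives the desired contradiction. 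Equivalently, and more robustly, one may transport the quantitative estimate $\int_{\gamma_0} H > \delta$ from the proof of Proposition \ref{prop-no-min-Chek} along the convergent subsequence to obtain a strictly positive limit for the corresponding arc integral, which is incompatible with $H \equiv 0$ on the minimal limit.

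The hard part will be this limit analysis. The uniform $|A|^2$ bound of Proposition \ref{Prop-infinite-sing} is precisely what rules out collapse and change of topology and yields a smooth embedded minimal torus $L_\infty$. The genuinely delicate point is to ensure $L_\infty$ does not meet the imaginary axis: the intersection count with $l_{\pi/2}$ only keeps the open-axis intersections at zero along the flow, and one must still preclude the limit curve from becoming tangent to the axis as $t \to \infty$. If $L_\infty$ is disjoint from the imaginary axis, then it is automatically a Chekanov torus with maximal cone angle $\psi_\infty < \pi$, so $\delta_\infty = \pi - \psi_\infty > 0$ and Proposition \ref{prop-no-min-Chek} applies verbatim; excluding this tangential degeneration, using the preserved monotone area balance to keep $\psi$ bounded away from $\pi$ while $\gamma$ avoids the imaginary axis, is where the main work lies. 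Once it is in hand, the contradiction with Proposition \ref{prop-no-min-Chek} closes the argument and the corollary follows.
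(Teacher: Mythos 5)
Your proposal is correct and takes essentially the same route as the paper's primary proof: combine the nonexistence of a minimal equivariant Chekanov torus (Proposition \ref{prop-no-min-Chek}) with the Chen--He dichotomy (Proposition \ref{Prop-infinite-sing}) to force a finite-time singularity, then locate it at the origin via Proposition \ref{Wood-lemmas}. The paper dispatches the limit analysis you flag as the hard part in one line, by observing that the limit would be a minimal equivariant representative of the preserved Hamiltonian isotopy class; it also records a second proof via the triangle evolution equation of Lemma \ref{lem-tangential-triangle}, which you do not use.
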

	
	\begin{proof}
		By Proposition \ref{prop-no-min-Chek}, there is no minimal equivariant representative in the Hamiltonian isotopy class of $L_\gamma$. Proposition \ref{Prop-infinite-sing} therefore implies that we have a finite-time singularity, which must occur at the origin by Proposition \ref{Wood-lemmas}.
	\end{proof}
	
	We also proffer an alternative proof using the evolution equation derived in Lemma \ref{lem-tangential-triangle}.
	
	\begin{proof}
		Suppose there is no finite-time singularity. We are in the situation of Lemma \ref{lem-tangential-triangle}, noting that the maximum opening angle must be a smooth function of $t$ for all $t$ sufficiently large. By the argument above, the right hand side of (\ref{eqn-ev}) is less than $-\delta$ for some $\delta>0$. But then there is only a finite period of time when $P$ has positive area, a contradiction. 
	\end{proof}	

	\subsection{Neck-to-neck surgery and the proof of the main theorem}\label{sec-surgery}
	
	In this section we analyse the behaviour of Lagrangian tori at the singular time. The equivariant condition necessarily (and intentionally) restricts us to two Hamiltonian isotopy classes of Lagrangian tori: the Clifford and Chekanov tori. We have shown in Proposition \ref{Prop-Chek-sing} that an equivariant Chekanov torus achieves a type II singularity at the origin with type II blow-up given by a Lawlor neck. Resolving the Lawlor neck singularity by neck-to-neck surgery gives a Clifford torus. We have also shown that almost-calibrated  Clifford tori have long-time existence and convergence to the equivariant minimal Clifford torus. Our dream is that under Lagrangian mean curvature flow with surgeries, exotic tori in $\bCP^2$ flow towards a minimal Clifford torus in infinite time, so we are led to ask the following question in our symmetric case:
	
	\begin{ques}
		Does an equivariant Chekanov torus flow to a minimal Clifford torus after neck-to-neck surgery?
	\end{ques}
	
	The answer to this question is yes, with the caveat that the number of neck-to-neck surgeries may be greater than one. 
	
	In fact, we will prove the following:
	
	\begin{thm}\label{Thm-main-conv}
		Let $L_\gamma$ be a equivariant Clifford or Chekanov torus in $\bCP^2$. Then, after a finite number of neck-to-neck surgeries, $L_\gamma$ converges to the unique equivariant minimal Clifford torus $L_1$ in infinite time.
	\end{thm}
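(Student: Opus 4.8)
The plan is to argue by induction on the integer $n = n(L_\gamma)$, equal to one quarter of the number of intersection points of $\gamma$ with the cone $C^0_{\pi/2}$. By the $(S^1\times\bZ_2)$-equivariance this is a well-defined positive integer for every embedded equivariant Clifford or Chekanov torus, and it is the quantity governing the whole process: along the smooth flow it is non-increasing by Proposition \ref{Wood-lemmas}(1), while a single neck-to-neck surgery (Section \ref{sec-surgery}), whose existence and localization rest on the Scale Lemma (Lemma \ref{lem-scale}), strictly decreases $n$ and interchanges the Hamiltonian isotopy type of the torus between Clifford and Chekanov. Since $n\ge 1$ for any torus, strict decrease at each surgery will bound the total number of surgeries; the real content is to show that between consecutive surgeries the flow either reaches the next singular time in finite time or else exists for all time and converges to $L_1$.

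For the inductive step I would split according to the isotopy type. If $L_\gamma$ is a Chekanov torus then by Corollary \ref{Prop-Chek-sing} it develops a finite-time singularity, which by Proposition \ref{Wood-lemmas}(2)--(4) is a Lawlor neck at the origin with type I blow-up $C^0_{\pi/2}$. The Scale Lemma guarantees this neck forms inside an arbitrarily small ball, so I can perform a neck-to-neck surgery before the singular time, localized on that ball; the result is a monotone equivariant Clifford torus with intersection number $n'\le n-1$, to which the induction hypothesis applies. If instead $L_\gamma$ is a Clifford torus, I would invoke the dichotomy of Proposition \ref{Prop-infinite-sing}: either the flow has a finite-time singularity, in which case the same surgery argument produces a monotone equivariant Chekanov torus with $n'\le n-1$ and the induction hypothesis applies; or the flow exists for all time with uniformly bounded $|A|^2$ and converges subsequentially to an embedded minimal equivariant Lagrangian. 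In the latter case the limit must be $L_1$, since the classification in Section \ref{sec-minimal} shows $L_1$ is the unique embedded minimal equivariant torus and Proposition \ref{prop-no-min-Chek} excludes a minimal Chekanov torus. Once $\gamma$ is $C^1$-close to the unit circle it is graphical over $L_1$, and Proposition \ref{Prop-AC-Cliff} upgrades the subsequential convergence to full convergence to $L_1$, terminating the process with no further surgery.

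The base case and termination then follow from the same dichotomy. A Chekanov profile necessarily meets $C^0_{\pi/2}$ at least eight times, so $n=1$ forces Clifford type; there a finite-time singularity is impossible, as surgery would produce a torus with $n'\le 0$, and Proposition \ref{Prop-infinite-sing} gives long-time existence and convergence to $L_1$. For termination in general, the surgeries strictly decrease the positive integer $n$ and hence can occur only finitely often; moreover the final surgery must produce a Clifford torus, since a Chekanov torus always develops a further singularity by Corollary \ref{Prop-Chek-sing} and would force another surgery. That final Clifford torus has no subsequent singularity, so it exists for all time and converges to $L_1$ by the Clifford analysis above. Hence $L_\gamma$ undergoes finitely many neck-to-neck surgeries and then converges to $L_1$, which is the assertion of Theorem \ref{Thm-main-conv}.

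The main obstacle is concentrated entirely in the surgery step, and is already resolved by the earlier sections: one must know that the singular neck forms on an arbitrarily small scale (Lemma \ref{lem-scale}) so that the neck-to-neck surgery is genuinely local, strictly reduces $n$, and preserves the monotone and equivariant conditions needed to re-enter the induction. Equally delicate is the identification of the infinite-time limit as $L_1$ rather than one of the immersed minimal spirographs of Theorem \ref{thm-minimal-immersed-lags}; this relies on embeddedness being preserved by the flow together with the uniqueness statement of Section \ref{sec-minimal}. Everything else in the argument is bookkeeping on the monotone integer $n$.
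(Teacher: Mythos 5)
Your proposal follows essentially the same route as the paper: descent on the number of intersections with $C^0_{\pi/2}$, using Proposition \ref{Wood-lemmas} for the monotonicity of that count and the singularity classification, Corollary \ref{Prop-Chek-sing} to force a finite-time singularity in the Chekanov case, the strict decrease of the count under neck-to-neck surgery, and Proposition \ref{Prop-AC-Cliff} together with the uniqueness of the embedded minimal equivariant torus for the terminal convergence. The only notable variation is at the base case, where the paper excludes a singularity for a $4$-intersection (graphical) Clifford torus directly via the area argument in the proof of Proposition \ref{Prop-AC-Cliff}, whereas you derive it indirectly from the impossibility of the post-surgery torus having $n'\le 0$; both arguments rest on the same lemmas and the conclusion is the same.
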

	
	\begin{defn}
		Let $L_\gamma$ be an equivariant Lagrangian mean curvature flow with a finite-time singularity at $[0:0:1]$ at time $T < \infty$. Suppose the type I blow-up is the cone $C^0_{\pi/2}$ and the type II blow-up is (independent of rescaling) the Lawlor neck asympototic to $C^0_{\pi/2}$ intersecting the real axis. 
		For any $r>0$, we can find $\varepsilon>0$ and a least time $t'<T$ such that $L_\gamma \cap B_r$ intersects $C^0_{\pi/2+2\varepsilon}$ as in Lemma \ref{lem-scale}. We define a new curve $\zeta$ in $\bC$ which will give a Lagrangian $L_\zeta$, which we will call the scale $r$ surgery of $L_\gamma$. 
		
		Let $p^{\pm} = r' e^{\pm i(\pi/4 +\varepsilon)}$ be the smallest radius points of intersection of $L_\gamma \cap B_r$ and $C^0_{\pi/2+2\varepsilon}$. Similar to the proof of Lemma \ref{lem-scale}, we can find a curve segment $\zeta'$ (intersecting the imaginary axis this time), smoothly tangent to $C^0_{\pi/2+\varepsilon}$ at points $q^\pm$, $-q^{\pm}$ at radius $r''<r'$. Define $\zeta$ to be the union of
		$\zeta'$, $\gamma \cap A(r',\infty)$ and a smooth curve interpolating between $p^\pm$ and $q^\pm$.
		
		Rescale radially and perform Moser's trick as in Vianna's construction to obtain from $L_\zeta$ a monotone surgery of $L_\gamma$.	We call this procedure neck-to-neck surgery.		
	\end{defn}
	
	\begin{rem}{$\ $}\\[-3ex]
		\begin{enumerate}
			\item On the level of Lagrangians, this construction is not canonical. Since we need to use Moser's trick to obtain a monotone torus, monotone surgery is never going to be canonical unless you can flow directly through the singularity. In this case however, there is a canonical way to perform Moser's trick since the equivariance means you can just rescale radially until you obtain a monotone torus. This is a quirk of the equivariance and cannot be expected in general. 
			
			\item The surgery procedure does not require that the type I blow-up is multiplicity 1 (even though we conjecture that all type I blow-ups in this situation are multiplicity 1). In the case that the multiplicity is higher than 1, the closest intersection point with the cone $C^0_{\pi/2+2\varepsilon}$ is continuous in time for times $t$ sufficiently close to the singular time $T$. Hence there is no ambiguity about the neck to be cut and rotated in any case.
			
			\item The surgery is canonical from a symplectic point of view since we always land in the same Hamiltonian isotopy class.
			
			\item The surgery procedure is designed to reduce the number of intersections of $L_\gamma$ with $C^0_{\pi/2}$. Since we rescale radially, applying Moser's trick does not alter the number of intersections.			
		\end{enumerate}
	\end{rem}
	
	\begin{proof}[Proof of Theorem \ref{Thm-main-conv}]
		As in the proof of Proposition \ref{Prop-Chek-sing}, we restrict to the case where $L_\gamma$ intersects the imaginary axis either twice (in the case of a Clifford torus) or not at all (in the case of the Chekanov torus). Then by Proposition \ref{Wood-lemmas}, equivariant tori can only achieve finite-time singularities at the origin $[0:0:1]$ with type I blow-up given by $C^0_{\pi/2}$, and type II blow-up given by a Lawlor neck asymptotic to $C^0_{\pi/2}$.
		
		Since $L_\gamma$ is compact, it has a finite number of intersections with $C^0_{\pi/2}$. By Proposition \ref{Wood-lemmas}, the number of intersections is a decreasing function of time under mean curvature flow, and by definition neck-to-neck surgery decreases the number of intersections with $C^0_{\pi/2}$. 
		
		If $L_\gamma$ is a Chekanov torus, Proposition \ref{Prop-Chek-sing} guarantees a finite-time singularity, at which point $L_\gamma$ becomes a Clifford torus after surgery. Similar to the proof of Proposition \ref{Prop-Chek-sing}, if $L_\gamma$ is a Clifford torus intersecting $C^0_{\pi/2}$ greater than 4 times, then $L_\gamma$ has inflection points and hence cannot be minimal. So $L_\gamma$ either has a finite-time singularity, or after a finite time has only 4 intersections with $C^0_{\pi/2}$.
		
		Since the number of intersections is strictly decreasing after surgery and both the above cases end in either surgery or a reduction of the number of intersections, after a finite time and a finite number of surgeries, we have the minimum number of intersections. It has already been shown in Proposition \ref{Prop-AC-Cliff} that if $L_\gamma$ has 4 intersections with $C^0_{\pi/2}$, then $L_\gamma$ exists for all time and converges to $L_1$. The result follows.
	\end{proof}	

	\begin{rem}
		As in the secondary proof of Proposition \ref{Prop-Chek-sing} we could work with the evolution equations for $\int_P \omega$ directly, rather than just showing there are no minimal objects and applying Proposition \ref{Prop-infinite-sing}. 
	\end{rem}
	
	\subsection{A Clifford torus with two singularities}\label{sec-2sings}
	
	Further to the result of the previous section, we also give a proof of the following existence result:
	
	\begin{prop}\label{Prop-exist-n-sing}
		Let $n\geq 0$ be an integer. Then 
		\begin{enumerate}
			\item There exists a Clifford torus that undergoes exactly $2n$ neck-to-neck surgeries before converging to a minimal Clifford torus.
			
			\item There exists a Chekanov torus that undergoes exactly $2n+1$ neck-to-neck surgeries before converging to a minimal Clifford torus.
		\end{enumerate}
	\end{prop}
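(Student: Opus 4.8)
The plan is to argue by simultaneous induction on the number of surgeries, exploiting the fact established in Section~\ref{sec-surgery} that each neck-to-neck surgery flips the Hamiltonian isotopy class between Clifford- and Chekanov-type. Consequently a Clifford torus can only converge to $L_1$ after an \emph{even} number of surgeries, while a Chekanov torus requires an \emph{odd} number, which accounts for the parity in the two cases of the statement. For the base case $n=0$, part~(1) is immediate: any graphical monotone equivariant Clifford torus undergoes no surgery and converges to $L_1$ by Proposition~\ref{Prop-AC-Cliff}. Part~(2) with $n=0$, i.e. a Chekanov torus with exactly one surgery, is essentially the content of the explicit construction behind Theorem~\ref{thm-Clifford-finite-time-sing-sum}: by Proposition~\ref{Prop-Chek-sing} an equivariant Chekanov torus always develops a finite-time singularity at the origin, and choosing the profile curve $\gamma$ to have small enough diameter guarantees, via the Scale Lemma (Lemma~\ref{lem-scale}), that a single neck collapses and the resulting neck-to-neck surgery produces a \emph{graphical} Clifford torus, which then converges with no further surgery.

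For the inductive step I would prove a \emph{prepending} construction: given an equivariant monotone torus $L$ of type $\tau$ (Clifford or Chekanov) whose flow-with-surgery undergoes exactly $k$ surgeries before converging to $L_1$, I construct an equivariant monotone torus $L'$ of the opposite type $\tau'$ whose flow develops a finite-time singularity at the origin such that the neck-to-neck surgery there yields a torus whose subsequent evolution agrees with that of $L$, giving $k+1$ surgeries in total. Concretely, one modifies the profile curve of $L$ near the origin by splicing in an additional innermost neck---a short arc tangent to a cone $C^0_{\pi/2+2\varepsilon}$, modelled on the minimal-surface barriers of Section~\ref{sec-minimal}---so that $L'$ is of type $\tau'$ and carries exactly four more intersections with $C^0_{\pi/2}$ than $L$, i.e. the number removed by one surgery. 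The Scale Lemma then forces this inner neck to be the first to collapse, and because surgery in the equivariant setting is canonical (one rescales radially and applies Moser's trick, landing in a definite isotopy class with a definite intersection count, as noted in Section~\ref{sec-surgery}), the surgered object reproduces the profile of $L$. Starting from the graphical Clifford torus and prepending repeatedly then generates a Clifford example with $2n$ surgeries and a Chekanov example with $2n+1$ surgeries.

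The main obstacle is pinning the surgery count to be \emph{exactly} $2n$ (resp. $2n+1$), which requires matched upper and lower bounds. The upper bound is supplied by Theorem~\ref{Thm-main-conv} together with the monotonicity of the intersection number with $C^0_{\pi/2}$ from Proposition~\ref{Wood-lemmas}: since each surgery strictly decreases this number and the prepended curve carries exactly the intersection count dictated by its position in the recursion, no spurious additional surgery can occur downstream. The lower bound---that every prepended neck genuinely collapses and is not merely absorbed into a graphical configuration---is the delicate point; here I would combine the Scale Lemma, which certifies collapse at an arbitrarily small scale, with the triangle estimate of Lemma~\ref{lem-tangential-triangle} and the computation in Proposition~\ref{prop-no-min-Chek}, verifying that the innermost triangle has strictly negative area derivative and so cannot persist with positive area for all time. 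Finally, the assertion that the post-surgery flow reproduces that of $L$ \emph{exactly}, rather than merely up to isotopy, rests on the radial canonicity of equivariant surgery: the two flows share the same symmetry, the same monotone constant, and the same intersection count with $C^0_{\pi/2}$, so they coincide as equivariant monotone flows and hence undergo identical subsequent surgeries.
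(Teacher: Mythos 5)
Your overall strategy (induction by prepending an innermost neck to a known example) is genuinely different from the paper's, which instead writes down a single explicit initial curve for the $n=1$ Clifford case and iterates it: the profile curve is chosen to cross $C^0_{2\pi/3}$ three times per quadrant, the two resulting biangles are seeded with non-monotone shrinking Chekanov barriers of area $\pi/18$ whose exact collapse time is computed via Lemma \ref{lem-shrinking-chek}, and the innermost triangle is given area $\pi/216$ so that Lemma \ref{lem-tangential-triangle} together with Gr\"onwall's inequality forces it to vanish strictly before the barriers do. Your proposal omits precisely this quantitative mechanism, and two of its steps do not hold as stated.

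First, the Scale Lemma cannot ``force the inner neck to be the first to collapse'': Lemma \ref{lem-scale} presupposes a flow that already has a finite-time singularity at the origin, so it certifies nothing about whether a given initial configuration develops one. Similarly, Proposition \ref{prop-no-min-Chek} concerns Chekanov tori (profile curves missing the imaginary axis) and does not apply to a Clifford torus carrying an extra inner neck; for such a torus the detailed version of the main theorem explicitly allows the alternative that the flow simply sheds intersections with $C^0_{\pi/2}$ and becomes graphical with no singularity at all. Ruling this out is the entire content of the existence proof, and it requires matched quantitative bounds --- an upper bound on the lifetime of the inner triangle and a lower bound on the time for which the maximal opening angle exceeds $2\pi/3$ --- which your sketch does not supply. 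Second, your closing claim that the surgered torus coincides with $L$ because the two share the same symmetry, monotone constant and intersection count is false: these invariants determine the Hamiltonian isotopy class, not the curve, and two distinct equivariant monotone initial data in the same class need not undergo the same number of subsequent surgeries under the flow. This is why the paper does not attempt to reproduce a previously analysed flow after surgery; it builds the nested structure into one initial curve and controls each stage by barriers. Your parity observation and the $n=0$ base cases are correct.
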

	
	We give an explicit construction for the $n=1$ case. The construction is somewhat technical but the idea is fairly simple: construct a monotone Clifford torus that has curvature sufficiently high in a neighbourhood of the origin, and use a barrier to stop $\gamma$ from crossing the cone $C^0_{2\pi/3}$ for long enough that a singularity is inevitable. The constants chosen in the course of the proof are of no particular significance. 
	
	First, we prove a small lemma concerning the type I singular time of non-monotone Chekanov tori.
	
	\begin{lem}\label{lem-shrinking-chek}
		Let $L_\zeta$ be a non-monotone equivariant torus bounding a Maslov 2 disc $D$ of area $A = \int_D \omega < \pi/3$. If $\zeta$ has $r > R(A)  = \sqrt{A(\pi-2A)^{-1}} $ everywhere, then $L_\zeta$ has a type I singularity away from the origin at time 
		\[T =  \frac{1}{6} \log\left( \frac{\pi}{3A-\pi}\right).\]
	\end{lem}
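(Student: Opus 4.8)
The plan is to read off the singular time from the evolution of the Maslov $2$ disc bounded by $\zeta$, and then to pin down the location and type of the collapse using an explicit concentric-circle barrier whose collapse time is engineered to equal $T$.

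\textbf{Step 1 (singular time).} Let $D$ be the Maslov $2$ disc bounded by $\zeta$ (the disc not enclosing the origin) and set $A(t)=\int_D\omega$. Since $\kappa=6$ and $\mu(D)=2$, the area evolution (\ref{KE-area-ev}) reads
\[\frac{d}{dt}A=6A-2\pi,\]
whose solution with $A(0)=A<\pi/3$ is $A(t)=\tfrac{\pi}{3}+\big(A-\tfrac{\pi}{3}\big)e^{6t}$. As $A-\tfrac{\pi}{3}<0$ this strictly decreases and vanishes at time $T=\tfrac{1}{6}\log\big(\tfrac{\pi}{\pi-3A}\big)$ (here $\pi-3A>0$ since $A<\pi/3$). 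Thus the flow cannot be smooth past $T$, as the enclosed area would become negative, and a singularity forms no later than $T$.

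\textbf{Step 2 (location).} I would build a barrier from the origin-centred Clifford circles $c_\rho(s)=\rho e^{is}$. These flow through concentric circles, and by (\ref{eqn-area-disc}) their enclosed (Maslov $4$) area $\pi\frac{2\rho^2}{1+2\rho^2}$ obeys $\frac{d}{dt}(\cdot)=6(\cdot)-4\pi$; solving as above shows $c_\rho$ collapses to the origin at time $\tfrac{1}{6}\log\big(\tfrac{1+2\rho^2}{1-\rho^2}\big)$. Setting this equal to $T$ and solving for $\rho$ yields exactly $\rho=\sqrt{A(\pi-2A)^{-1}}=R(A)$, so $R(A)$ is characterised as the concentric circle whose collapse time coincides with that of $\zeta$. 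Because $\zeta$ has $r>R(A)$ everywhere it is disjoint from $c_{R(A)}$ at $t=0$, and since both curves solve the same quasilinear parabolic equation (\ref{eqn-equiv-mcf}) their intersection number is non-increasing (the Sturm--Angenent principle underlying Proposition \ref{Wood-lemmas}(1), which does not require monotonicity); hence $\zeta$ remains strictly outside $c_{R(A)}(t)$ for all $t<T$. In particular every point of $\zeta(t)$ has $r>\rho(t)>0$ for $t<T$, so the flow stays a bounded perturbation of curve shortening in a region bounded away from the origin, is smooth and embedded up to $T$, and forms no origin singularity before $T$. The first singularity is therefore precisely the collapse of $\zeta$ at $T$.

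\textbf{Step 3 (type).} Topologically $L_\zeta=\{[\zeta(s)e^{i\alpha}:\zeta(s)e^{-i\alpha}:1]\}$ is $\zeta\times S^1_\alpha$, and the radius of the $S^1_\alpha$-orbit through $\zeta(s)$ is comparable to $r(\zeta(s))$. When $\zeta$ contracts to a point $p_0\neq 0$ this orbit radius stays bounded below, so near the singular set the flow is modelled on a plane curve contracting to a round point times a non-degenerate circle factor; the blow-up is a compact shrinking circle, a non-flat self-shrinker, and the singularity is type I (consistent with $A(t)\sim 2\pi(T-t)$, i.e. $|A|^2\sim(T-t)^{-1}$). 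This is exactly where ``away from the origin'' is used: at the origin the $S^1_\alpha$-orbit degenerates and one would instead see the cone $C^0_{\pi/2}$, a type II Lawlor-neck singularity.

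\textbf{Main obstacle.} The delicate point is the borderline nature of Step 2: since $c_{R(A)}$ collapses at the \emph{same} instant $T$, the avoidance estimate only gives $r>\rho(t)$ with $\rho(t)\to0$, which does not by itself force the collapse point off the origin, and no concentric circle can furnish a strict margin precisely because $R(A)$ equates the two collapse times. Making ``away from the origin'' rigorous therefore needs the sharp analysis at this critical radius — combining the strong maximum principle (so the gap to $c_{R(A)}$ cannot close) with the near-curve-shortening behaviour of $\zeta$ far from the origin to pin the type I shrinking-circle limit at a point $p_0\neq0$. Equivalently, one rules out the competing origin/cone scenario, which would be type II by Theorem \ref{SSzeroMaslov} together with the $\bZ_2$-analysis, leaving only the type I shrinking circle away from the origin.
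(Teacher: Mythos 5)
Your proposal follows essentially the same route as the paper: the singular time is read off from the linear ODE for the Maslov~2 disc area, and the location and type are controlled by a concentric Clifford circle used as a barrier, combined with the fact (Proposition \ref{Wood-lemmas}) that a type II singularity would have to occur at the origin. Steps 1 and 2 are computed correctly, and you have (rightly) corrected the sign in the stated formula for $T$ to $\pi/(\pi-3A)$.

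The ``main obstacle'' you flag is genuine but closes in one line, and it is worth knowing that the paper's own proof slips at exactly this point. You correctly observe that the circle of radius $R(A)$ bounds a Maslov~4 disc of area exactly $B=2A$, so its collapse time is exactly $T$, and avoidance against $c_{R(A)}$ alone only yields $r>\rho(t)$ with $\rho(t)\to 0$. The fix is compactness: since $r>R(A)$ holds strictly and pointwise on the compact curve $\zeta$, we have $R':=\min_\zeta r>R(A)$, and one should run the avoidance argument against $c_{R'}$ instead. Its disc area is $B'>2A$, so its collapse time $T'=\tfrac16\log\bigl(2\pi/(2\pi-3B')\bigr)$ is strictly greater than $T$, and $\zeta$ is confined to a region bounded away from the origin uniformly on $[0,T]$. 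Since any type II singularity must sit at the origin, none can occur by time $T$; the singularity is therefore type I, and since the disc area vanishes at $T$ the singular time is exactly $T$. (The paper asserts $B>2A$ for the circle of radius $R(A)$ itself, which is in fact an equality; the strict margin really does come from $r>R(A)$ plus compactness, as above.) Your Step 3 picture of a shrinking circle times a nondegenerate $S^1$-orbit is consistent with this, but is not needed once the origin is excluded: the paper's dichotomy (type II only at the origin) does the work.
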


	\begin{proof}
		We have that 
		\[	\frac{d}{d t} \int_D \omega = \kappa \int_D \omega -2 \pi ,\]
		so denoting $f(t) = \int_D \omega$, we have 
		\[f(t) = \left(A - \frac{\pi}{3}\right) e^{6t} + \frac{\pi}{3}.\]
		Hence the final existence time $T$ of $L_\gamma$ satisfies
		\[T \leq \frac{1}{6} \log\left( \frac{\pi}{3A-\pi}\right),\]
		with equality if the singularity is type I. 
		
		Now consider $L_{R(A)}$ given by $\gamma_{R(A)}(s) = R(A) e^{is}$. Equation (\ref{eqn-area-disc}) reveals that $L_{R(A)}$ bounds a Maslov 4 disc of area $B =  (2 \pi R(A)^2)(1+2R(A)^2)^{-1}$, so $B>2A$. Furthermore, a similar calculation to above gives the final existence time $T'$ of $L_{R(A)}$ as
		\[T' = \frac{1}{6}\log\left(\frac{2\pi}{3B-2\pi}\right).\]
		Note that if $L_\zeta$ has a type II singularity, it must be at the origin and since $L_{R(A)}$ is a barrier to $L_\zeta$, it must occur after $T'$. But $B>2A$ implies that $T' > T$, and the result follows.  
	\end{proof}
	
	\begin{proof}[Proof of Proposition \ref{Prop-exist-n-sing}]
		We construct the $n=1$ case, i.e. an equivariant Clifford torus $L_\gamma$ with a finite-time singularity. The cases $n>1$ follow an iterated version of the $n=1$ case, and the Chekanov case follows automatically from the Clifford case.
		
		\begin{figure}[h]
			\centering
			\includegraphics[scale=2]{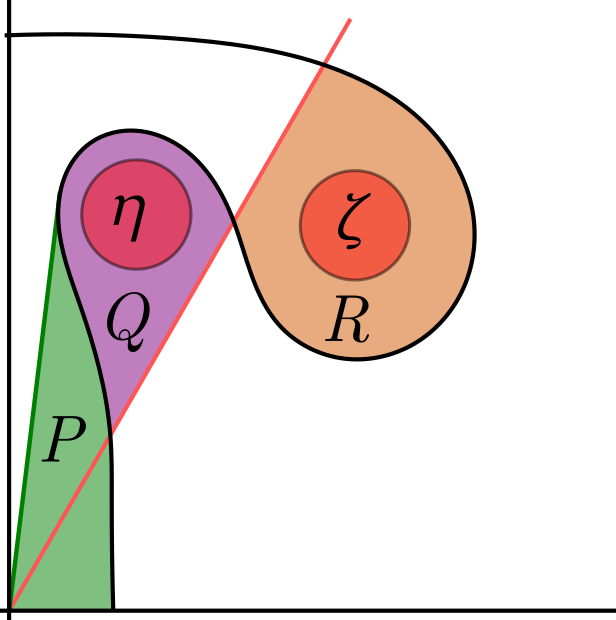}
			\caption{The construction of a Clifford torus with a finite-time singularity. Choosing the area of $P$ sufficiently small and the areas of $Q$ and $R$ sufficiently large guarantees a finite-time singularity.}\label{fig-cliff2sings}
		\end{figure}
		
		See Figure \ref{fig-cliff2sings} for the construction we now describe. Since $\gamma$ is equivariant, it suffices to describe the construction only in the positive real quadrant, i.e. the region 
		\[Z = \{re^{i\phi} \in \bC: \phi \in [0,\pi/2]\}.\]
		
		Let $\gamma$ be a equivariant curve such that $L_\gamma$ is a Clifford-type torus and $\gamma$ intersects the cone $C^0_{2\pi/3}$ 3 times in $Z$. Suppose $\gamma$ has the minimum of 2 inflection points, i.e. points with $\langle \gamma,\nu\rangle =0$. Note that then there are two biangles $Q$ and $R$ bounded by $\gamma$ and $C^0_{2\pi/3}$. Furthermore, there is a triangle $P$ formed by $\gamma$ and the cone $C^0_{\psi}$ where $\psi$ is the widest opening angle such that $C^0_\psi$ intersects $\gamma$ exactly 2 times in $Z$. 		
		
		We can choose $\gamma$ such that we can find two Euclidean circles $\zeta \subset Q$ and $\eta \subset R$ each bounding discs with area $\pi/18$. Furthermore, we can choose $\gamma$ such that $\zeta$ and $\eta$ both have $r > R(\pi/18)$ as in the requirements of Lemma \ref{lem-shrinking-chek}.
		
		Furthermore, we can choose $\gamma$ such that the triangle $P$ has area $\pi/216$. It is clear we can make these choices whilst also choosing $\gamma$ such that $L_\gamma$ is monotone.  
		
		The non-monotone Chekanov tori $L_\zeta$ and $L_\eta$ give lower bounds for the final time $\int_Q \omega, \int_R \omega >0$ via direct application of Lemma \ref{lem-shrinking-chek}. We have that that $\int_Q \omega>0 $ and $\int_R \omega >0$ (and hence $\psi > 2\pi/3$) for all times $t<T$ with
		\[T := \frac{1}{6} \log\left(\frac{6}{5}\right).\]
		
		Suppose for a contradiction that the flow exists on the interval $[0,T]$. Then the triangle $P$ exists for that period also, and the evolution of the triangle $P$ is as in Lemma \ref{lem-tangential-triangle}. We have that 
		\[\frac{d}{d t} \int_P \omega  \leq \kappa \int_P \omega + \left(\pi - 2\psi\right) \leq  \kappa \int_P \omega - \frac{\pi}{3},\]
		since $\psi$ is decreasing under the flow. Denoting $u(t) = \kappa \int_P \omega - \pi/3$, we see that $u(t)$ satisfies the differential inequality
		\[\frac{d}{d t} u(t) \leq \kappa u(t),\]
		to which we can apply Gr\"onwall's inequality. Since $\kappa = 6$, we deduce that $u(t)$ satisfies
		\[u(t) \leq u(0) e^{6t},\]
		and hence 
		\[\int_P \omega \leq \left(\frac{\pi}{216} - \frac{\pi}{18}\right) e^{6t} + \frac{\pi}{18} = \left(1-\frac{11}{12}e^{6t} \right)\frac{\pi}{18}\]
		Hence the triangle $P$ has a maximum existence time of 
		\[T' : = \frac{1}{6} \log \left(\frac{12}{11}\right),\]
		which is strictly less than $T$, a contradiction. Hence a finite-time singularity occurs in the period $[0,T]$. 
	\end{proof}
	
	\appendix
	\section{Type II singularities and blow-ups in Lagrangian mean curvature flow}\label{appendix}
	
	Singularities in mean curvature flow are classified into two types based on the rate of blow-up of the second fundamental form $A$. For a singularity at time $T$, if 
	\[\sup_{L_t} |A|^2 \leq  C(T-t)^{-1},\]
	for some constant $C$, we call the singularity type I, and if no such bound exists, we call it type II. The primary reason for this distinction is the following: for $\lambda >0$, $\tilde x = \lambda(x-x_0)$, $\tilde t = \lambda^2(t-t_0)$,
	\[\tilde F^\lambda_{\tilde t} 	:= \lambda \left( F_{t_0 +\lambda^{-2} \tilde t} - x_0\right)\]
	is a mean curvature flow, called a parabolic rescaling. Using Huisken's monotonicity formula, one can show that any sequence of parabolic rescalings $F^{\lambda_i}_{\tilde t}$ with $\lambda_i \to \infty$ at a type I singularity $(x_0,t_0) = (x,T)$ of the flow converges subsequentially to a smooth limiting flow $F_{\tilde t}^\infty$, called a type I blow-up (possibly not unique), with the property that 
	\[\vec H = \frac{x^\perp}{2\tilde t}.\]
	Solitons of mean curvature flow of this type are called \textit{self-shrinkers} since they flow by homotheties. If the singularity is instead type II, one still can find a weak limit to parabolic rescalings, though now the limiting flow is a \textit{Brakke flow} \cite{Brakke1978}: a flow of rectifiable varifolds rather than smooth manifolds. We also call this limit a type I blow-up, even though the singularity is type II.
	
	The most fundamental results on singularities in Lagrangian mean curvature flow are the compactness results of Neves, originally established for zero-Maslov Lagrangians in \cite{Neves2007} but later extended to monotone Lagrangians in \cite{Neves2010}. We present them in the latter form since it is more applicable to the subject matter of this paper. Compact monotone Lagrangians in $\bC^n$ have a maximal existence time strictly controlled by the monotone constant. One can always normalise by homotheties of the ambient space so that this maximal time of existence is $1/2$. Neves' theorems concern singularities happening before this time.
	
	\begin{thm}[Neves' Theorem A]\label{thm-A-Nev-mono}
		Let $L$ be a normalised monotone Lagrangian in $\bC^n$ developing a singularity at $T<1/2$. For any sequence of rescaled flows $L^j_s$ at the singularity with Lagrangian angles $\theta^j_s$, there exists a finite set of angles $\{\bar \theta_1, \dots, \bar \theta_N\}$ and special Lagrangian cones $L_1,\dots,L_N$ such that after passing to a subsequence we have that for any smooth test function $\phi$ with compact support, every $f \in C^2(S^1)$ and $s<0$
		\[ \lim_{j \to \infty} \int_{L^j_s} f(\exp(i\theta^j_s)) \phi \, d\mathcal H^n = \sum_{k=1}^N m_k f(\exp(i \bar \theta_k)) \mu_k(\phi),\]
		where $\mu_k$ and $m_k$ are the Radon measure of the support of $L_k$ and its multiplicity respectively.
		
		Furthermore the set of angles is independent of the sequence of rescalings.
	\end{thm}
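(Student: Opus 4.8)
The plan is to follow Neves' blow-up argument, whose decisive feature is that in $\bC^n$ the Lagrangian angle obeys the heat equation $\partial_t\theta=\Delta\theta$ (the first line of (\ref{eqn-ev-theta-vol})) while the monotone condition bounds the oscillation of a suitable lift of $\theta$ near the singularity. First I would take the parabolic rescalings $L^j_s$ at the singular point as defined in the appendix, with scales $\lambda_j\to\infty$. Huisken's monotonicity formula, applied with the backward heat kernel $\Phi$ centred at the singular space-time point, gives a uniform bound on the Gaussian-weighted area; combined with the normalised monotone hypothesis---which pins the area of any collapsing disc to its Maslov class---this yields uniform local area-ratio bounds. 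Brakke's compactness theorem \cite{Brakke1978} then extracts, after passing to a subsequence, a limiting integral Brakke flow $L^\infty_s$ that is an integral varifold for almost every $s<0$.

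The next step is to identify the limit as a self-shrinker. Because the rescalings concentrate at a single space-time point, the Gaussian density of $L^j_s$ at any fixed $s$ converges to the (constant) density of the original flow at the singular point, so the density of the limit is independent of $s$. The equality case of the monotonicity formula then forces $L^\infty_s$ to be backward self-similar, $L^\infty_s=\sqrt{-s}\,L^\infty_{-1}$, i.e.\ a self-shrinker in the sense of the appendix.

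The heart of the argument---and the step I expect to be the main obstacle---is to upgrade ``self-shrinker'' to ``special Lagrangian cone,'' which is exactly where the Lagrangian structure enters. The key estimate is that the Gaussian-weighted Dirichlet energy of the angle dissipates:
\[
\int_{-\infty}^{0}\!\int_{L^\infty_s}|\nabla\theta|^2\,\Phi\;d\mathcal H^n\,ds=0.
\]
To prove this I would exploit that $|\nabla\theta|^2\,\Phi\,d\mathcal H^n\,ds$ is invariant under parabolic rescaling, since $\theta$ itself is scale-invariant, so the limiting integral equals a tail $\int_{T-\delta}^{T}\!\int_{L_t}|\nabla\theta|^2\Phi$ of the original flow with $\delta\to0$. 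Differentiating $\int_{L_t}(\theta-\bar\theta)^2\Phi$ via $\partial_t\theta=\Delta\theta$ and using the monotone condition to bound this quantity uniformly, one shows the full space-time integral over $[T-\delta_0,T]$ is finite, whence its tail vanishes. This gives $\nabla\theta\equiv0$ on the limit, hence $\vec H=J\nabla\theta\equiv0$: the limit is minimal, and a minimal self-shrinker through the origin satisfies $x^\perp\equiv0$ and is therefore a cone. Since $\theta$ is then constant on each connected component and the monotone/Maslov constraint restricts its total variation over the relevant scales, $\exp(i\theta^j_s)$ can concentrate only at finitely many values $\bar\theta_1,\dots,\bar\theta_N\in S^1$, each carried by one cone $L_k$; this yields the claimed weak convergence of $f(\exp(i\theta^j_s))\,d\mathcal H^n$ to $\sum_k m_k f(\exp(i\bar\theta_k))\mu_k$.

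For the final assertion that the angle set is independent of the rescaling sequence, I would characterise each $\bar\theta_k$ intrinsically as a value at which a frequency/density functional of the original flow retains positive mass in the limit $r\to0$, a description making no reference to the sequence $\lambda_j$. The monotonicity of the Gaussian density upgrades the relevant $\limsup$ to a genuine limit, so two sequences producing different finite angle sets would contradict this limit; this uniqueness is the content of the corresponding results in \cite{Neves2007,Neves2010}.
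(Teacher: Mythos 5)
You should first be aware that the paper contains no proof of this statement: Theorem \ref{thm-A-Nev-mono} is quoted in the appendix purely as background and is attributed to Neves \cite{Neves2010} (the monotone extension of \cite{Neves2007}), so there is no in-paper argument to compare yours against; the comparison below is with Neves' actual proof.

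Your sketch reproduces the standard tangent-flow skeleton (Huisken monotonicity, area-ratio bounds, Brakke compactness, self-similarity of the limit, vanishing of the scale-invariant weighted Dirichlet energy of $\theta$), and that skeleton is essentially correct for \emph{zero-Maslov} Lagrangians, i.e.\ for the 2007 version of the theorem. The genuine gap is that for a monotone Lagrangian in $\bC^n$ the mean curvature form $H$ is closed but not exact, so there is no globally defined function $\theta$ --- only $e^{i\theta}$ and $d\theta=H$ make sense. Your key step, differentiating $\int_{L_t}(\theta-\bar\theta)^2\Phi$ via $\partial_t\theta=\Delta\theta$ and bounding it ``using the monotone condition,'' is therefore not available as written, and the same obstruction afflicts the primitive $\beta$ of the Liouville form that one needs in order to prove $x^\perp\to 0$. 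The entire content of the monotone case is Neves' observation that the combination $\beta_t-2(\tfrac{1}{2}-t)\theta_t$ \emph{is} exact for a normalised monotone Lagrangian and satisfies a usable evolution equation, and that rescaling at a singular time $T<1/2$ leaves a positive factor $2(\tfrac{1}{2}-T)$ that lets one decouple the oscillation estimates for $\theta$ and $\beta$. Your argument never invokes the hypothesis $T<1/2$, and any proof that omits it must be wrong: the monotone Clifford torus $S^1(1)\times\cdots\times S^1(1)\subset\bC^n$ is normalised monotone, becomes singular exactly at $T=1/2$, and its blow-up is the Clifford self-shrinker, which is not a union of special Lagrangian cones. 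Two secondary points: the finiteness of the angle set should come from the uniform Gaussian density bound combined with the universal lower density bound for special Lagrangian cones, not from a ``total variation of the Maslov constraint''; and the independence of the angle set from the rescaling sequence is obtained in \cite{Neves2007} and \cite{Neves2010} by applying the monotonicity formula to $\int_{L_t} f(\exp(i\theta_t))\Phi$ on the unrescaled flow, which is the precise version of the ``frequency functional'' you gesture at.
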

	
	Theorem B applies for monotone Lagrangians in $\bC^2$. 
	
	\begin{thm}[Neves' Theorem B]\label{thm-B-Nev-mono}
		Let $L$ be a normalised monotone Lagrangian in $\bC^2$ developing a singularity at $T<1/2$. For any sequence of rescaled flows $L^j_s$ at the singularity  with Lagrangian angles $\theta^j_s$, and for any sequence of connected components $\Sigma_j$ of $L^j_s \cap B_{4R}(0)$ intersecting $B_R(0)$, there exists a unique angle $\bar \theta$ and special Lagrangian cone $\Sigma$ such that after passing to a subsequence we have that for any smooth test function $\phi$ on $B_{2R}(0)$ with compact support, every $f \in C^2(S^1)$ and $s<0$
		\[\lim_{j\to \infty} \int_{\Sigma_j} f(\exp(i\theta^j_s) \phi \, d\mathcal H^n =  m f(\exp(i \bar \theta)) \mu(\phi),\]
		where $\mu$ and $m$ are the Radon measure of the support of $\Sigma$ and its multiplicity respectively.
	\end{thm}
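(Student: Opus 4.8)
The statement is Neves' Theorem B, and the plan is to follow his strategy of upgrading the global weighted convergence of Theorem \ref{thm-A-Nev-mono} to a statement about a single connected component. Recall that $\bC^2$ is Calabi--Yau, so by \eqref{eqn-ev-theta-vol} the Lagrangian angle satisfies $\partial_t \theta = \Delta \theta$ and $\vec H = J\nabla\theta$, whence $|\vec H| = |\nabla \theta|$. First I would invoke Theorem \ref{thm-A-Nev-mono} to obtain, for the given rescaling sequence $L^j_s$, a finite set of angles $\{\bar\theta_1,\dots,\bar\theta_N\}$ and special Lagrangian cones $L_1,\dots,L_N$ to which $L^j_s$ converges in the weighted-varifold sense, with the angle set independent of the sequence. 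The whole content of Theorem B beyond this is that a single connected component $\Sigma_j$ of $L^j_s \cap B_{4R}(0)$ meeting $B_R(0)$ cannot ``see'' two distinct limiting angles.

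The next ingredient is energy decay. Using Huisken's monotonicity formula along the rescaled flow, the Gaussian-weighted area is monotone with defect governed by $\int\int |\vec H - \tfrac{x^\perp}{2s}|^2 \rho$. Since any blow-up limit is a self-shrinker of constant Lagrangian angle (a special Lagrangian cone), one deduces that the Gaussian-weighted Dirichlet energy $\int |\nabla\theta^j_s|^2 \rho \, d\mathcal H^2$ tends to zero on compact sets for almost every $s<0$. As $\rho$ is bounded above and below on the fixed ball $B_{4R}(0)$, the unweighted energy $\int_{\Sigma_j} |\nabla\theta^j_s|^2 \, d\mathcal H^2$ also vanishes in the limit; this encodes that the limit is minimal and that the angle is asymptotically locally constant wherever mass concentrates.

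The technical core is the isoperimetric inequality, the analogue for $\bC^2$ of \eqref{Nev-lem-gen} discussed around Lemma \ref{Theorem-B-CP2}: there is a constant $C$, uniform in $j$, such that for every open $A \subset L^j_s \cap B_{6R}(0)$ with rectifiable boundary,
\[ \left(\mathcal H^2(A)\right)^{1/2} \le C\, \mathcal H^1(\partial A). \]
I would obtain this from the Michael--Simon Sobolev inequality applied to approximations of the characteristic function of $A$, using that $|\vec H| = |\nabla\theta|$ is $L^2$-small on the relevant scale and that the monotone hypothesis yields uniformly bounded area ratios (see \cite[Lemma 5.2]{Neves2010}). This is precisely the dimension- and curvature-sensitive step.

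Finally I would run the single-angle argument. Suppose for contradiction that $\Sigma_j$ weighted-converges to cones of two distinct angles $\bar\theta_a \neq \bar\theta_b$, and fix an intermediate value $c$. Then both $\{\theta^j_s > c\}\cap\Sigma_j$ and its complement carry area bounded below by the masses of the respective cones, so applying the isoperimetric inequality to the region of smaller area gives a uniform lower bound on the level-set length $\mathcal H^1(\{\theta^j_s = c\}\cap\Sigma_j)$. On the other hand, the co-area formula together with Cauchy--Schwarz gives
\[ \int_{c-\eta}^{c+\eta} \mathcal H^1\!\left(\{\theta^j_s = \tau\}\right) d\tau = \int_{\{|\theta^j_s - c|<\eta\}} |\nabla\theta^j_s|\, d\mathcal H^2 \le \left(\mathcal H^2(\Sigma_j)\right)^{1/2}\left(\int_{\Sigma_j} |\nabla\theta^j_s|^2\right)^{1/2}, \]
whose right-hand side tends to zero by the energy decay above. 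Hence for almost every intermediate $c$ the level-set length tends to zero, contradicting the isoperimetric lower bound. Therefore $\Sigma_j$ can converge only to cones of a single angle $\bar\theta$, and connectedness together with Theorem \ref{thm-A-Nev-mono} forces convergence to a single special Lagrangian cone $\Sigma$ with a well-defined multiplicity $m$. The main obstacle is the interaction of the last two steps: securing the \emph{uniform} isoperimetric constant and the lower area bounds on the level sets for a single connected component, rather than merely for the whole flow.
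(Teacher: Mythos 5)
The paper does not prove this statement: it is quoted verbatim (in the monotone form) from Neves \cite{Neves2010} as background in the appendix, so there is no ``paper's own proof'' to compare against. That said, your sketch is a faithful reconstruction of Neves' actual argument, and it is consistent with the one place where the paper does engage with the proof's internals, namely Lemma \ref{Theorem-B-CP2}, where the author identifies \cite[Lemma 5.2]{Neves2010} --- exactly the uniform isoperimetric inequality you isolate as the technical core --- as the sole ingredient that fails to generalise beyond $\bC^2$ and must be re-established in $\bCP^2$. Your chain (Theorem A for the finite angle set, Gaussian energy decay of $|\nabla\theta|^2$ from Huisken monotonicity, Michael--Simon with absorbed mean-curvature term for the isoperimetric bound, then co-area plus Cauchy--Schwarz against the level-set length lower bound to exclude two angles on one connected component) is the right skeleton with the correct exponent $(\mathcal H^2(A))^{1/2}\le C\,\mathcal H^1(\partial A)$ for $n=2$.

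Two points deserve more care if this were to be written out in full. First, when you apply the isoperimetric inequality to a sub-level set $\{\theta^j_s<c\}\cap\Sigma_j$, its boundary inside $B_{6R}(0)$ consists of the level set \emph{together with} a portion lying on a sphere $\partial B_{r}(0)$; Neves disposes of the spherical contribution by a co-area/averaging choice of radius, and without that step the contradiction with vanishing level-set length does not close. Second, the lower area bounds for both $\{\theta^j_s>c\}\cap\Sigma_j$ and its complement require that positive mass accumulates near each of the two putative angles \emph{within the single component} $\Sigma_j$, which is part of the contradiction hypothesis but needs to be extracted from the weighted-varifold convergence (it is here that the ``intersecting $B_R(0)$'' hypothesis and bounded area ratios enter). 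Neither issue is a flaw in your strategy --- both are handled in \cite{Neves2010} --- but they are precisely the ``interaction of the last two steps'' you flag, and naming them explicitly is the difference between a sketch and a proof.
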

	
	Heuristically, these theorems give the type I blow-up models of type II singularities of Lagrangian mean curvature flow as (unions of) special Lagrangian cones. Consider the $n=2$ case: by considering the hyper-K\"ahler rotation, one see that the only special Lagrangian cones are unions of special Lagrangian planes with equal Lagrangian angle. Assuming all planes are multiplicity 1, there is then only one blow-up model up to rotation, a union of two transversely intersecting special Lagrangian planes with the same Lagrangian angle.
	
	We can further characterise singular behaviour by a procedure called the type II blow-up. The precise details of this procedure are not important to this paper, but we sketch the general principle. We refer the reader to Mantegazza \cite{mantegazza_2013} for additional details. Alternatively, the procedure is described in depth in \cite{wood_2020}, where the first examples of Lawlor necks as type II blow-ups were found. Instead of blowing up at a parabolic rate and at a fixed point in time and space, we blow up at a sequence of space-time points $(x_i,t_i)$ maximising the second fundamental form $|A|$ on the interval $[0,T-1/i]$, at a rate dictated by the second fundamental form $|A|$. Thus we guarantee convergence locally smoothly to an eternal mean curvature flow, i.e. a mean curvature flow existing for all times $t \in (-\infty,\infty)$ (as opposed to the self-shrinkers found by the type I procedure, which are ancient but not eternal). 
	
	Note that the type II blow-up is not unique and doesn't a priori have to satisfy the same asymptotics as the type I blow-up. There are few results on type II blow-ups for Lagrangian mean curvature flow so far, but the most important appears in the work of Wood \cite{Wood2019}, where he shows that almost-calibrated Lagrangian cylinders with prescribed asymptotic behaviour achieve type II singularities in finite time, and the type II blow-up is given by a special Lagrangian called a Lawlor neck asymptotic to the type I blow-up. 
	
	\bibliography{Symp}
	
\end{document}